\newtheorem{lem}{Lemma} 
\newtheorem{theorem}{Theorem}
\newtheorem{cor}{Corollary}
\newtheorem{rmk}{Remark}
\newtheorem{assump}{Assumption}
\newtheorem{corollary}{Corollary}
\newtheorem{proposition}{Proposition}
\newenvironment{P1}
  {\begin{proof}[Proof of Theorem~\ref{main_gtsaga}]}
  {\end{proof}}
\newenvironment{P2}
  {\begin{proof}[Proof of Theorem~\ref{main_gtsvrg}]}
  {\end{proof}}
\def\mc{\mathcal}
\def\mb{\mathbf}
\def\mbb{\mathbb}
\def\ra{\rightarrow}
\def\GTVR{\textbf{\texttt{GT-VR}}}
\def\GTSAGA{\textbf{\texttt{GT-SAGA}}}
\def\GTSVRG{\textbf{\texttt{GT-SVRG}}}
\def\SAGA{\textbf{\texttt{SAGA}}}
\def\SVRG{\textbf{\texttt{SVRG}}}
\def\DSGD{\textbf{\texttt{DSGD}}}
\def\mbb{\mathbb}
\def\mb{\mathbf}
\def\mc{\mathcal}
\def\wt{\widetilde}
\def\ol{\overline}
\def\ul{\underline}
\def\bds{\boldsymbol}
\newcommand{\mn}[1]{{\left\vert\kern-0.25ex\left\vert\kern-0.25ex\left\vert\kern0.3ex #1 
		\kern0.3ex\right\vert\kern-0.25ex\right\vert\kern-0.25ex\right\vert}}
\begin{document}
\title{Variance-Reduced Decentralized Stochastic Optimization with Accelerated Convergence}
\author{Ran Xin$^\dagger$,  Usman A. Khan$^\ddagger$, and Soummya Kar$^\dagger$
		\\$^\dagger$Carnegie Mellon University, Pittsburgh, PA \hspace{1cm} $^\ddagger$Tufts University, Medford, MA
		\thanks{RX and SK are with the Electrical and Computer Engineering (ECE) Dept. at Carnegie Mellon University, \texttt{\{ranx,soummyak\}@andrew.cmu.edu}. UAK is with the ECE Dept. at Tufts University, \texttt{khan@ece.tufts.edu}. The work of SK and RX has been partially supported by NSF under award \#1513936. The work of UAK has been partially supported by NSF under awards \#1350264, \#1903972, and \#1935555. 
		}
	}
\maketitle

\begin{abstract}
This paper describes a novel algorithmic framework to minimize a finite-sum of functions available over a network of nodes. The proposed framework, that we call~\GTVR, is stochastic and decentralized, and thus is particularly suitable for problems where large-scale, potentially private data, cannot be collected or processed at a centralized server. The \GTVR~framework leads to a family of algorithms with two key ingredients: (i) \textit{local variance reduction}, that enables estimating the local batch gradients from arbitrarily drawn samples of local data; and, (ii) \textit{global gradient tracking}, which fuses the gradient information across the nodes. Naturally, combining different variance reduction and gradient tracking techniques leads to different algorithms of interest with valuable practical tradeoffs and design considerations. 

Our focus in this paper is on two instantiations of the~$\GTVR$ framework, namely~\textbf{\texttt{GT-SAGA}} and~\textbf{\texttt{GT-SVRG}}, that, similar to their centralized counterparts (\SAGA~and~\SVRG), exhibit a compromise between space and time. We show that both~\textbf{\texttt{GT-SAGA}} and~\textbf{\texttt{GT-SVRG}} achieve accelerated linear convergence for smooth and strongly convex problems and further describe the regimes in which they achieve non-asymptotic, network-independent linear convergence rates that are faster with respect to the existing decentralized first-order schemes. Moreover, we show that both algorithms achieve a linear speedup in such regimes, in that, the total number of gradient computations required at each node is reduced by a factor of $1/n$, where $n$ is the number of nodes, compared to their centralized counterparts that process all data at a single node. Extensive simulations illustrate the convergence behavior of the corresponding algorithms. 

\begin{IEEEkeywords}
Decentralized optimization, stochastic gradient methods, variance reduction, multi-agent systems.
\end{IEEEkeywords}
\end{abstract}

\section{Introduction}\label{intro}
In this paper, we consider decentralized finite-sum minimization problems that take the following form:
\begin{align*}
\mbox{P1}:
\quad\min_{\mb{x}\in\mathbb{R}^p}F(\mb{x})\triangleq\frac{1}{n}\sum_{i=1}^{n}f_i(\mb{x}),
\quad
f_i(\mb{x}) \triangleq \frac{1}{m_i}\sum_{j=1}^{m_i} f_{i,j}(\mb{x}),
\end{align*}	
where each cost function~${f_i:\mbb R^p\ra\mbb R}$ is private to a node~$i$, in a~network of~$n$ nodes, and is further subdivided into an average of~$m_i$ component functions~$\{f_{i,j}\}_{j=1}^{m_i}$. This formulation has found tremendous interest over the past decade and has been studied extensively by the signal processing, control, and machine learning communities~\cite{DGD_tsitsiklis,DGD_nedich,diffusion_sayed,cons+inov_kar}. When the dataset is large-scale and further contains private information, it is often not feasible to communicate and process the entire dataset at a central location. Decentralized stochastic gradient methods thus~are preferable as they not only benefit from local (short-range) communication but also exhibit low computation complexity by sampling and processing small subsets of data at each node~$i$, instead of the entire local batch of~$m_i$ functions.

Decentralized stochastic gradient descent (DSGD) was introduced in~\cite{DSGD_nedich,diffusion_sayed,cons+inov_kar}, which combines network fusion with local stochastic gradients and has been popular in various decentralized learning tasks. However, the performance of DSGD~is mainly adversely impacted by two components: (i) the variance of the local stochastic gradients at each node; and, (ii) the dissimilarity between the datasets and local functions  across the nodes. In this paper, we propose a novel algorithmic framework, namely~\GTVR, that systematically addresses both of these aspects of~DSGD~by building an estimate of the global descend direction~$-\nabla F$ locally at each node based on local stochastic gradients. In particular, the \GTVR~framework leads to a family of algorithms with two key ingredients: (i) \textit{local variance reduction}, that estimates the local batch gradients~${\sum_j\nabla f_{i,j}}$ from arbitrarily drawn samples of local data; and, (ii) \textit{global gradient tracking}, which uses the aforementioned local batch gradient estimates and fuses them across the nodes to track the global batch gradient~${\sum_i \nabla f_i}$. Naturally, existing methods for variance reduction, such as SAG~\cite{SAG}, SAGA~\cite{SAGA}, SVRG~\cite{SVRG}, SARAH~\cite{SARAH}, and for gradient tracking, such as dynamic average consensus~\cite{DAC,NEXT,harnessing,DIGing} and dynamic average diffusion~\cite{dav_diff}, are all valid choices for the two components in~\GTVR~and lead to various design choices and practical trade-offs.

This paper focuses on smooth and strongly convex problems, where simple schemes, such as~\SAGA~and~\SVRG,~are shown to obtain linear convergence and strong performance. These two methods are extensively studied in the centralized settings and exhibit a compromise between space and time. Specifically, \SAGA~in practice demonstrates faster convergence compared with~\SVRG~\cite{SAGA,SPM_XKK}, however at the expense of additional storage requirements. Consequently, we consider the following two instantiations of the~\GTVR~framework: (i)~\GTSAGA, which is an incremental gradient method that requires~$\mc O(pm_i)$ storage cost at each node~$i$; and, (ii)~\GTSVRG, which is a hybrid gradient method that does not require additional storage but computes local batch gradients periodically, which leads to stringent requirements on network synchronization and may add latency to the overall implementation.

\emph{Related work:} Significant progress has been made recently towards decentralized first-order gradient methods. Examples include EXTRA~\cite{EXTRA}, Exact-Diffusion~\cite{Exact_Diffusion}, methods based on gradient-tracking~\cite{GT_CDC,NEXT,harnessing,DIGing,proxDGT,AGT,Network-DANE,AB,push-pull} and primal-dual methods~\cite{GT_jakovetic,GT_EXTRA_NIPS,DLM}; these full gradient methods, based on certain bias-correction principles, achieve linear convergence to the optimal solution for smooth and strongly convex problems and improve upon the well-known~DGD~\cite{DGD_nedich}, where a constant step-size leads to linear but inexact convergence. Several stochastic variants of EXTRA, Exact-Diffusion, and gradient tracking methods have been recently studied in~\cite{DSGD_NIPS,D2,DSGT_Xin,SED,xin2020improved,DSGD_vlaski_2,DSGD_Pu,DSGT_Pu}; these methods, due to the non-diminishing variance of the local stochastic gradients, converge sub-linearly to the optimal solution with decaying step-sizes and outperform their deterministic counterparts when local data batches are large and low-precision solutions suffice~\cite{SPM_XKK}. Exact linear convergence to the optimal solution has been obtained with the help of variance reduction where existing
decentralized stochastic methods include~\cite{DSA,DAVRG,DSBA,edge_DSA,ADFS,Network-DANE}. The proposed~\GTVR~framework leads to accelerated convergence over the related stochastic methods; a detailed comparison will be revisited in~Sections~\ref{results} and~\ref{simulation}.


\textit{Main contributions:} We enlist the main contributions of this paper as follows: \begin{enumerate}[(i)]
\item We describe~\GTVR, \textit{a novel algorithmic framework} to minimize a finite sum of functions over a decentralized network of nodes. 
\item Focusing on two particular instantiations of~\GTVR, \GTSAGA~and \GTSVRG, we show how different combinations of variance reduction and gradient tracking potentially lead to valuable practical considerations in terms of storage, computation, and communication tradeoffs. 
\item We show that both~\textbf{\texttt{GT-SAGA}} and~\textbf{\texttt{GT-SVRG}} achieve accelerated linear convergence to the optimal solution for smooth and strongly convex problems. 
\item We characterize the regimes in which~\textbf{\texttt{GT-SAGA}}~and \textbf{\texttt{GT-SVRG}} achieve non-asymptotic, network-independent convergence rates and exhibit a linear speedup, in that, the total number of gradient computations at each node is reduced by a factor of~$1/n$ compared to their centralized counterparts that process all data at a single node.
\end{enumerate}
To the best of our knowledge,~\GTSAGA~and~\GTSVRG~are the first decentralized stochastic methods that show \textit{provable} \textit{network-independent linear convergence} and \textit{linear speedup} without requiring the expensive computation of dual gradients or proximal mappings of the cost functions.

\textit{Outline of the paper: }Section~\ref{results} develops the class of decentralized VR algorithms proposed in this paper while Section~\ref{contribution} presents the main convergence results and a detailed comparison with the state-of-the-art. Section~\ref{simulation} provides extensive numerical simulations to illustrate the convergence behavior of the proposed methods. Section~\ref{general} presents a unified approach to cast and analyze the proposed algorithms. Sections~\ref{sGTSAGA} and~\ref{sGTSVRG} contain the convergence analysis for \textbf{\texttt{GT-SAGA}} and \textbf{\texttt{GT-SVRG}}, respectively, and Section~\ref{conclusions} concludes the paper.

\textit{Basic notation: }We use lowercase bold letters to denote vectors and~$\left\|\:\cdot\:\right\|$ to denote the Euclidean norm of a vector. The matrix~$I_d$ is the~${d\times d}$ identity, and~$\mb{1}_d$ (resp.~$\mb 0_d$) is the~$d$-dimensional column vector of all ones (resp. zeros). For two matrices~${X,Y\in\mathbb{R}^{d\times d}}$,~${X\otimes Y}$ denotes their Kronecker product. The spectral radius of a matrix~$X$ is denoted by~$\rho(X)$, while its spectral norm is denoted by~$\mn X$. The weighted infinity norm of~$\mb{x}= [x_1,\cdots,x_d]^\top$ given a positive weight vector~$\mb w = [w_1,\cdots,w_d]^\top$ is defined as~$\left\|\mb{x}\right\|_\infty^{\mb w} = \max_i |x_i|/w_{i}$ and $\mn{\cdot}_{\infty}^{\mb x}$ is the matrix norm induced by~${\left\|\:\cdot\:\right\|_\infty^{\mb w}}$. 

\section{Algorithm Development}\label{results}
In this section, we systematically build the~proposed \textbf{\texttt{GT-VR}} framework and describe its two instantiations, \textbf{\texttt{GT-SAGA}} and \textbf{\texttt{GT-SVRG}}. To this aim, we consider~\DSGD~\cite{DSGD_nedich,diffusion_sayed,cons+inov_kar}, a well-know decentralized version of stochastic gradient descent, and its convergence guarantee for smooth and strongly convex problems as follows. Let~$\mb x^*$ denote the unique minimizer of Problem~P1 and~$\mb x_i^k\in\mbb R^p$ denote the estimate of~$\mb x^*$ at node~$i$ and iteration~$k$ of~\DSGD. The update of~\DSGD~is given by
\begin{align}\label{dsgd_eq}
\mb x_i^{k+1} = \textstyle\sum_{r=1}^n \ul{w}_{ir} \mb x_r^k - \alpha \cdot \nabla f_{i,s_i^k}(\mb x_i^k),\qquad k\geq0,
\end{align}
where the matrix~${\ul W=\{\ul{w}_{ir}\}\in\mbb R^{n\times n}}$ collects the weights that each node assigns to its neighbors and the index~$s_i^k$ is chosen uniformly at random from the set~$\{1,\ldots,m_i\}$ at each iteration~$k$. Assuming bounded variance of~$\nabla f_{i,s_i^k}(\mb x_i^k)$, i.e.,~$\mathbb{E}[\|\nabla f_{i,s^k_i}(\mb x^k_i)-\nabla f_i(\mb x_i^k)\|^2 ~|\:\mb x_i^k] \leq\nu^2,\forall i,k,$ 
and cost functions to be smooth and strongly convex, it can be shown that with an appropriate constant step-size~$\alpha$ the mean-squared error~$\mathbb{E}[\|\mb x^k_i-\mb x^*\|^2]$, at each node~$i$, decays linearly up to a steady state error such that~\cite{SED} 
\begin{align}
\label{DSGD_convergence}
\limsup_{k\rightarrow\infty}\frac{1}{n}\sum_{i=1}^{n}\mathbb{E}&\left[\big\|\mb x^k_i-\mb x^*\big\|^2\right] \nonumber\\
=&~\mc{O}\Big(\frac{\alpha\nu^2}{n\mu}
+ \frac{\alpha^2\kappa^2\nu^2}{1-\sigma}
+ \frac{\alpha^2\kappa^2\zeta^2} {(1-\sigma)^2}\Big),
\end{align}
where~${\zeta^2 := \frac{1}{n}\sum_{i=1}^{n}\left\|\nabla f_i\left(\mb x^*\right)\right\|_2^2}$ and~$(1-\sigma)$ is the spectral gap of the weight matrix~$\ul W$. This steady-state error, due to the presence of~$\nu^2$ and~$\zeta^2$, can be eliminated with the help of decaying step-size $\alpha_k = \mc{O}(1/k)$; however, the convergence rate becomes sub-linear~$\mc{O}(1/k)$~\cite{DSGD_Pu}. In other words, there is an inherent rate/accuracy trade-off in the performance of~\DSGD. The proposed \textbf{\texttt{GT-VR}} framework, built on global gradient tracking and local variance reduction, completely removes the steady-state error of~\DSGD~and achieves fast convergence with a constant step-size to the exact solution.

\vspace{-0.4cm}
\subsection{The~\GTVR~framework}
The proposed~\GTVR~framework combines two well-known techniques from recent centralized and decentralized optimization literature to systematically eliminate the steady-state error of~\DSGD~and as a consequence recovers linear convergence to the exact solution. The framework has two key ingredients:

(i) \textit{Local Variance Reduction:} \GTVR~removes the performance limitation due to the variance~$\nu^2$ of the stochastic gradients by asymptotically estimating the local batch gradient~$\nabla f_i$, at each node~$i$, based on randomly drawn samples from the local dataset. Many variance reduction schemes, e.g.,~\cite{SAG,SAGA,SVRG,SARAH}, are applicable here and a suitable one can be chosen according to the application of interest and problem specifications. 

(ii) \textit{Global Gradient Tracking:} The other error source~$\zeta^2$ is due to the fact that~$\nabla f_i(\mb{x}^*) \neq \mb 0_p,\forall i$, in general, because of the difference between the local and global cost functions. This issue is addressed with the help of gradient tracking techniques~\cite{DAC,dav_diff,NEXT,DIGing,harnessing} that properly fuse the local batch gradient estimates (obtained from the local variance reduction procedures described above) to track the global batch gradient.

Our focus in this paper is on smooth and strongly convex problems for which the  variance reduction methods \textbf{\texttt{SAGA}}~\cite{SAGA} and \textbf{\texttt{SVRG}}~\cite{SVRG}, in centralized settings, are shown to achieve strong practical performance and theoretical guarantees. These two methods contrast each other, in that, they can be viewed as a compromise between space and time~\cite{SAGA}, where \SAGA~requires additional storage but, in practice, demonstrates faster convergence as compared to~\SVRG,~where additional storage is not required. Additionally, the two methods are build upon different variance-reduction principles, i.e.,~\SAGA~is a randomized incremental gradient method, whereas~\SVRG~is a hybrid gradient method that evaluates batch gradients periodically in addition to stochastic gradient computations at each iteration, as will be detailed further. We thus explicitly focus on these two methods in this paper, formally described next.

\subsubsection{\GTSAGA} Algorithm~\ref{GT-SAGA} formally describes the \SAGA-based implementation of~\GTVR. 
To implement the gradient estimator, each node~$i$ maintains a table of component gradients $\{\nabla f_{i,j}(\mb{z}_{i,j}^k)\}_{j=1}^{m_i}$, where $\mb{z}_{i,j}^k$ is the most recent iterate at which the component gradient~$\nabla f_{i,j}$ was evaluated up to iteration~$k$. At each iteration~$k$, each node~$i$ samples an index~$s_i^k$ uniformly at random from the local indices~$\{1,\cdots,m_i\}$ and computes its local gradient estimator as
\begin{align*}
\mb{g}_{i}^{k} =&~\nabla f_{i,s_i^{k}}\big(\mb{x}_{i}^{k}\big) - \nabla f_{i,s_i^{k}}\big(\mb{z}_{i,s_i^{k}}^{k}\big) 
+\textstyle{\frac{1}{m_i}\sum_{j=1}^{m_i}}\nabla f_{i,j}\big(\mb{z}_{i,j}^{k}\big).
\end{align*}
After~$\mb{g}_i^{k}$ is computed, the~$s_i^{k}$-th
element in the gradient table is replaced by~$\nabla f_{i,s_i^k}(\mb{x}_i^k)$, while other entries remain unchanged. The local estimators~$\mb{g}_i^k$'s are then fused over the network to compute~$\mb y_i^{k}$, which tracks the global batch gradient~$\nabla F$ at each node~$i$, and is used as the descent direction to update the local estimate~$\mb x_i^k$ of the optimal solution.  
Clearly, each local estimator~$\mb g_i^{k}$ approximates the local batch gradient~$\nabla f_i$ in an incremental manner via the average of the past component gradients in the table. This implementation procedure results in a storage cost of~$\mc{O}(pm_{i})$ at each node~$i$, which can be reduced to~$\mc O(m_i)$ for certain structured problems~\cite{SAG,SAGA}.

\begin{algorithm}
\caption{\textbf{\texttt{GT-SAGA}} at each node~$i$}
\label{GT-SAGA}
	\begin{algorithmic}[1]
		\Require{$\mb{x}_i^0$;~${\mb{z}_{i,j}^1=\mb{x}_i^0},~{\forall j\in\{1,\cdots,m_i\}};$ $\alpha$; $\{\ul{w}_{ir}\}_{r=1}^n$; $\mb{y}_i^0=\mb{g}_i^0=\nabla f_i(\mb{x}_i^0)$.}
		\For{{$k= 0,1,2,\cdots$}}
		\State{Update the local estimate of the solution:$$\mb{x}_{i}^{k+1} = \textstyle{\sum_{r=1}^{n}}\ul{w}_{ir}\mb{x}_{r}^{k} - \alpha\mb{y}_{i}^{k};$$}
		\State{Select~$s_{i}^{k+1}$ uniformly at random from~$\{1,\cdots,m_i\}$;}
		\State{Update the local gradient estimator:\begin{align*}\mb{g}_{i}^{k+1} =&~\nabla f_{i,s_i^{k+1}}\big(\mb{x}_{i}^{k+1}\big) - \nabla f_{i,s_i^{k+1}}\big(\mb{z}_{i,s_i^{k+1}}^{k+1}\big) \nonumber\\
		&+\textstyle{\frac{1}{m_i}\sum_{j=1}^{m_i}}\nabla f_{i,j}\big(\mb{z}_{i,j}^{k+1}\big);\end{align*}}
		\State{If~$j = s_i^{k+1}$, then~$\mb{z}_{i,j}^{k+2} = \mb{x}_{i,j}^{k+1}$; else~$\mb{z}_{i,j}^{k+2} = \mb{z}_{i,j}^{k+1}$.}
		\State{Update the local gradient tracker:$$\mb{y}_{i}^{k+1} = \textstyle{\sum_{r=1}^{n}}\ul{w}_{ir}\mb{y}_{r}^{k} + \mb{g}_i^{k+1} - \mb{g}_i^{k};$$}
		\EndFor
	\end{algorithmic}
\end{algorithm}

\subsubsection{\GTSVRG} Algorithm~\ref{GT-SVRG} formally describes the~\SVRG-based implementation of~\GTVR. In  contrast to \textbf{\texttt{GT-SAGA}} that incrementally approximates the local batch gradients via past component gradients, \textbf{\texttt{GT-SVRG}} achieves variance reduction by evaluating the local batch gradients~$\nabla f_i$'s \textit{periodically}. \textbf{\texttt{GT-SVRG}} may be interpreted as a ``double loop" method, where each node~$i$, at every outer loop update~$\{\mb{x}^{tT}_i\}_{t\geq0}$, calculates a local full gradient~$\nabla f_i(\mb{x}^{tT}_i)$ that is retained in the subsequent inner loop iterations to update the local gradient estimator~$\mb{v}_i^k$, i.e., for~$k\in[tT,(t+1)T-1$],
$$
\mb{v}_{i}^{k} = \nabla f_{i,s_i^{k}}\big(\mb{x}_{i}^{k}\big) - \nabla f_{i,s_i^{k}}\big(\mb{x}_{i}^{tT}\big) + \nabla f_i\big(\mb{x}_{i}^{tT}\big).
$$
Clearly, \textbf{\texttt{GT-SVRG}} eliminates the requirement of storing the most recent component gradients at each node and thus has a favorable storage cost compared with \textbf{\texttt{GT-SAGA}}. However, this advantage comes at the expense of evaluating two stochastic gradients~$\nabla f_{i,s_i^{k}}\big(\mb{x}_{i}^{k}\big)$ and~$\nabla f_{i,s_i^{k}}\big(\mb{x}_{i}^{tT}\big)$ at every iteration, in addition to calculating the local batch gradients~$\nabla f_i$'s every~$T$ iterations. See Remarks~\ref{R1} and~\ref{R2} for additional discussion. 
\begin{algorithm}[!h]
	\caption{\textbf{\texttt{GT-SVRG}} at each node~$i$}
	\label{GT-SVRG}
	\begin{algorithmic}[1]
		\Require{$\mb{x}_i^0$;~$\bds\tau_i^0=\mb{x}_i^0$;~$\alpha$;~$\{\ul{w}_{ir}\}_{r=1}^n$;~$T$;~$\mb{y}_i^0=\mb{v}_i^0=\nabla f_i(\mb{x}_i^0)$.}
		\For{{$k= 0,1,2,\cdots$}}
		\State{Update the local estimate of the solution:$$\mb{x}_{i}^{k+1} = \textstyle{\sum_{r=1}^{n}}\ul{w}_{ir}\mb{x}_{r}^{k} - \alpha\mb{y}_{i}^{k};$$}
		\State{Select~$s_{i}^{k+1}$ uniformly at random from~$\{1,\cdots,m_i\}$;}
		\State{If~${\bmod (k+1,T) = 0}$, then~${\bds\tau_i^{k+1} = \mb{x}_i^{k+1}}$;
		
	else $\bds\tau_i^{k+1} = \bds\tau_i^{k}$.}
		\State{Update the local stochastic gradient estimator:\begin{align*}\mb{v}_{i}^{k+1} =&~\nabla f_{i,s_i^{k+1}}\big(\mb{x}_{i}^{k+1}\big) - \nabla f_{i,s_i^{k+1}}\big(\bds\tau_i^{k+1}\big) \nonumber\\
		&+ \nabla f_i\big(\bds\tau_i^{k+1}\big);\end{align*}}
		\State{Update the local gradient tracker:$$\mb{y}_{i}^{k+1} = \textstyle{\sum_{r=1}^{n}}\ul{w}_{ir}\mb{y}_{r}^{k} + \mb{v}_i^{k+1} - \mb{v}_i^{k};$$}
		\EndFor
	\end{algorithmic}
\end{algorithm}      

\section{Main Results}\label{contribution}
The convergence results for \textbf{\texttt{GT-SAGA}} and \textbf{\texttt{GT-SVRG}} are established under the following assumptions.
\begin{assump}\label{sc}
The global cost function~$F$ is~$\mu$-strongly convex, i.e.,~$\forall\mb{x}, \mb{y}\in\mbb{R}^p$ and for some~$\mu>0$, we have
	\begin{equation*}
	F(\mb{y})\geq F(\mb{x})+ \big\langle\nabla F(\mb{x}), \mb{y}-\mb{x}\big\rangle+\frac{\mu}{2}\|\mb{x}-\mb{y}\|^2.
	\end{equation*}
\end{assump}
\noindent We note that under Assumption 1, the global cost function~$F$ has a unique minimizer, denoted as~$\mb{x}^*$.
\begin{assump}\label{smooth}
Each local cost function~$f_{i,j}$ is~$L$-smooth, i.e.,~$\forall\mb{x}, \mb{y}\in\mbb{R}^p$ and for some~$L>0$, we have
	\begin{equation*}
	\qquad\|\mb{\nabla} f_{i,j}(\mb{x})-\mb{\nabla} f_{i,j}(\mb{y})\|\leq L\|\mb{x}-\mb{y}\|.
	\end{equation*}
\end{assump}
\noindent Clearly, under Assumption~\ref{smooth}, the global cost~$F$ is also~$L$-smooth and~${L\geq\mu}$. We use~${Q \coloneqq L/\mu}$ to denote the condition number of the global cost~$F$. 

\begin{assump}\label{connect}
The weight matrix~${\ul W = \{\ul{w}_{ir}\}}$ associated with the network~$\mc{G}$ is primitive and  doubly stochastic.
\end{assump}

\noindent Assumption~\ref{connect} is not only restricted to undirected graphs and is further satisfied by the class of strongly-connected directed graphs that admit doubly stochastic weights. This assumption implies that the second largest singular value $\sigma$ of~$\ul W$ is less than~$1$, i.e,~$\sigma=\mn {\ul{W}-\frac{1}{n}\mb{1}_n\mb{1}_n^\top}<1$~\cite{matrix_analysis}. Note that although we focus on the basic case of static networks which appear, for instance, in data centers, the convergence analysis provided here can be possibly extended to the more general case of time-varying dynamic networks following the methodology in~\cite{TV-AB}.

We denote~$M\coloneqq \max_{i}m_i$ and~$m\coloneqq \min_{i}m_i$, where~$m_i$ is the number of local component functions at node~$i$. The main convergence results of \textbf{\texttt{GT-SAGA}} and~\textbf{\texttt{GT-SVRG}} are summarized respectively in the following theorems. 

\begin{theorem}[Mean-square convergence of \textbf{\texttt{GT-SAGA}}]\label{main_gtsaga}
Let Assumptions~\ref{sc},~\ref{smooth}, and~\ref{connect} hold. If the step-size~$\alpha$ in~\textbf{\texttt{GT-SAGA}} is such that~
$$\alpha=\min\left\{\mc{O}\left(\tfrac{1}{\mu M}\right),\mc{O}\left(\tfrac{m}{M}\tfrac{(1-\sigma)^2}{LQ}\right)\right\},$$
then we have:~$\forall k\geq0,~\forall i\in\{1,\cdots,n\}$, and for some~$c>0$,
\begin{align*}
\mathbb{E}\left[\left\|\mb{x}_i^k-\mb{x}^*\right\|^2\right] \leq c\left(1-\min\left\{\mc{O}\left(\tfrac{1}{M}\right),\mc{O}\left(\tfrac{m}{M}\tfrac{(1-\sigma)^2}{Q^2}\right)\right\}\right)^k\!.
\end{align*}
\textbf{\texttt{GT-SAGA}} thus achieves an~$\epsilon$-optimal solution
of~$\mb x^*$ in $$\mc{O}\left(\max\left\{M,\tfrac{M}{m}\tfrac{Q^2}{(1-\sigma)^2}\right\}\log\tfrac{1}{\epsilon}\right)$$ component gradient computations (iterations) at each node.
\end{theorem}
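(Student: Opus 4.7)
The plan is to set up a coupled linear system of inequalities (in expectation) governing four error quantities, show that the resulting iteration matrix is contractive for sufficiently small step-size, and then read off the rate and the iteration/gradient complexity. Concretely, for~$\mb{x}^k = [\mb{x}_1^k;\cdots;\mb{x}_n^k]$ and~$\mb{y}^k = [\mb{y}_1^k;\cdots;\mb{y}_n^k]$, define the network averages~$\bar{\mb{x}}^k = \tfrac{1}{n}\sum_i \mb{x}_i^k$ and~$\bar{\mb{y}}^k = \tfrac{1}{n}\sum_i \mb{y}_i^k$, and track: (i) the optimality gap~$U^k \triangleq \mathbb{E}\|\bar{\mb{x}}^k-\mb{x}^*\|^2$; (ii) the consensus error~$C^k \triangleq \mathbb{E}\|\mb{x}^k - \mb{1}_n \otimes \bar{\mb{x}}^k\|^2$; (iii) the gradient-tracking error~$T^k \triangleq \mathbb{E}\|\mb{y}^k - \mb{1}_n \otimes \bar{\mb{y}}^k\|^2$; and (iv) the SAGA table error~$H^k \triangleq \tfrac{1}{n}\sum_{i,j}\mathbb{E}\|\nabla f_{i,j}(\mb{z}_{i,j}^k) - \nabla f_{i,j}(\mb{x}^*)\|^2$. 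Since~$\ul W$ is doubly stochastic, the tracking update yields the standard invariant~$\bar{\mb{y}}^k = \tfrac{1}{n}\sum_i \mb{g}_i^k$, and the SAGA sampling ensures~$\mathbb{E}[\mb{g}_i^k \mid \mc F^k] = \nabla f_i(\mb{x}_i^k)$, so that~$\bar{\mb{y}}^k$ is an unbiased (but biased in expectation prior to conditioning) estimator of the average of local gradients at the local iterates.

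Next, I would derive four scalar recursions. For~$U^k$, I use~$L$-smoothness and~$\mu$-strong convexity of~$F$ in the standard contraction argument for~$\bar{\mb{x}}^{k+1} = \bar{\mb{x}}^k - \alpha \bar{\mb{y}}^k$, producing a~$(1-\mu\alpha)$-type factor on~$U^k$ together with consensus and variance cross-terms~$\alpha C^k$ and~$\tfrac{\alpha^2}{n}\cdot\text{Var}(\mb{g}_i^k)$. For~$C^k$, I apply the contraction~$\|\ul W - \tfrac{1}{n}\mb 1\mb 1^\top\|=\sigma$ to obtain a factor~$\tfrac{1+\sigma^2}{2}C^k + \mc{O}(\alpha^2) T^k$ via Young's inequality. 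For~$T^k$, I combine the tracking contraction with the stability bound~$\mathbb{E}\|\mb{g}^{k+1}-\mb{g}^k\|^2$, which in the SAGA setting can be controlled by~$L^2$ times increments of the iterates plus the table error~$H^k$. Finally, for~$H^k$ I use the critical SAGA identity that each entry is refreshed with probability~$1/m_i$, yielding a recursion~$H^{k+1} \le (1-\tfrac{1}{M})H^k + \tfrac{1}{m}\cdot L^2\mathbb{E}\|\mb{x}^k - \mb{1}_n\otimes\mb{x}^*\|^2$; this introduces the factor~$M/m$ that appears in the theorem.

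Assembling these four recursions gives a linear inequality~$\mb{e}^{k+1}\preceq J_\alpha \mb{e}^k$ where~$\mb{e}^k = [U^k,C^k,T^k,H^k]^\top$ and~$J_\alpha\in\mathbb{R}^{4\times 4}$ is entrywise-nonnegative with off-diagonal entries proportional to~$\alpha$ or~$\alpha^2$. The main obstacle — and the technical heart of the argument — is to show that~$\rho(J_\alpha) \le 1 - \min\{\mc{O}(1/M),\mc{O}(\tfrac{m}{M}\tfrac{(1-\sigma)^2}{Q^2})\}$ for~$\alpha$ in the stated range. Here I would invoke the weighted infinity-norm bound~$\rho(J_\alpha)\le \mn{J_\alpha}_\infty^{\mb w}$ (cf.~\cite{matrix_analysis}) and choose a positive weight vector~$\mb w = [w_1,w_2,w_3,w_4]^\top$ so that each of the four row-sum inequalities simultaneously holds with the same contraction factor. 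The asymmetry between the diagonal entries~$1-\mu\alpha$, $\tfrac{1+\sigma^2}{2}$, $\tfrac{1+\sigma^2}{2}$, $1-\tfrac{1}{M}$ dictates that~$w_1 \gg w_2,w_3,w_4$ and~$w_4/w_2$ be chosen proportional to~$M/m$; balancing the constraints yields the two regimes for~$\alpha$ in the theorem — one dominated by strong convexity~($\alpha = \mc{O}(1/(\mu M))$) and one dominated by the network and conditioning~($\alpha = \mc{O}(\tfrac{m}{M}\tfrac{(1-\sigma)^2}{LQ})$).

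Given the contractive LSI, linear convergence of each~$e^k_\ell$, and in particular of~$C^k$ and~$U^k$, is immediate; combining them via~$\mathbb{E}\|\mb{x}_i^k-\mb{x}^*\|^2 \le 2U^k + 2 C^k$ yields the per-node bound in the theorem, with the constant~$c$ absorbing~$\max_\ell e^0_\ell / w_\ell$. Inverting the rate gives~$\mc{O}(\max\{M,\tfrac{M}{m}\tfrac{Q^2}{(1-\sigma)^2}\}\log(1/\epsilon))$ iterations (equivalently, per-node component gradient computations, since each iteration of~\GTSAGA{} requires~$\mc{O}(1)$ such evaluations per node). I expect the delicate step to be the joint choice of~$(\mb w,\alpha)$ that produces exactly the two regimes stated; the recursions themselves follow from standard gradient-tracking manipulations augmented by the SAGA variance and table-refresh identities.
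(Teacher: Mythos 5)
Your proposal follows essentially the same route as the paper's proof: the paper also assembles a four-dimensional entrywise linear matrix inequality in the consensus error, the averaged optimality gap, the gradient-tracking error, and a SAGA-table error (the paper tracks $t^k=\sum_i\frac{1}{m_i}\sum_j\|\mb z_{i,j}^k-\mb x^*\|^2$, the iterate-level analogue of your $H^k$, with the same $(1-\frac{1}{M})$ contraction and $\frac{1}{m}$ injection), and then bounds the spectral radius via the weighted infinity norm by exhibiting a positive vector $\bds\epsilon$ with $G_\alpha\bds\epsilon\le(1-\frac{\mu\alpha}{4})\bds\epsilon$, which is exactly your weighted-row-sum balancing step. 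The only differences are cosmetic (gradient-space versus iterate-space table error, ordering of the state vector), so your plan is a faithful outline of the actual argument.
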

\begin{theorem}[Mean-square convergence of \textbf{\texttt{GT-SVRG}}]\label{main_gtsvrg}
Let Assumptions~\ref{sc},~\ref{smooth}, and~\ref{connect} hold. If the step-size~$\alpha$ and the length~$T$ of the inner loop are such~that
$$\alpha = \mc{O}\left(\tfrac{(1-\sigma)^2}{LQ}\right), \qquad T =\mc{O}\left(\tfrac{Q^2\log Q}{(1-\sigma)^2}\right),$$
then we have:~$\forall t\geq0,~\forall i\in\{1,\cdots,n\}$, and for some~$\ul{c}>0$,
\begin{align*}
\mathbb{E}\left[\left\|\mb{x}_i^{tT}-\mb{x}^*\right\|^2\right] \leq \ul{c}\cdot 0.7^t
\end{align*}
\textbf{\texttt{GT-SVRG}} thus achieves an~$\epsilon$-optimal solution of~$\mb x^*$ in $$\mc{O}\left(\left(M+\tfrac{Q^2\log Q}{(1-\sigma^2)^2}\right)\log\tfrac{1}{\epsilon}\right)$$ component gradient computations at each node.
\end{theorem}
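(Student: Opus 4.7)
The plan is to lift \GTSVRG~to a network-level recursion, bound a compact vector of error metrics on a single outer loop, and iterate the resulting contraction across outer loops. Stacking the local iterates into $\mb X^k = [\mb x_1^k,\ldots,\mb x_n^k]^\top$ and defining $\mb Y^k$, $\mb V^k$, and the reference-point matrix $\bds\Upsilon^k$ (whose $i$-th row is $(\bds\tau_i^k)^\top$) analogously, the key quantities to track on inner-loop iteration $k\in[tT,(t+1)T-1]$ are the optimality gap $U^k = \mathbb{E}\|\bar{\mb x}^k - \mb x^*\|^2$, the consensus error $C^k = \mathbb{E}\|\mb X^k - \mb 1_n\bar{\mb x}^{k\top}\|_F^2$, the gradient-tracking error $G^k = \mathbb{E}\|\mb Y^k - \mb 1_n\bar{\mb y}^{k\top}\|_F^2$, and the reference drift $R^k = \mathbb{E}\|\mb X^k - \bds\Upsilon^k\|_F^2$. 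The goal is to close a three-dimensional linear inequality system on $\mb e^k := [U^k,C^k,G^k]^\top$ after absorbing $R^k$.

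I would first derive per-iteration inequalities inside one inner loop. Strong convexity and smoothness applied to the averaged update yield $U^{k+1}\leq(1-\Theta(\mu\alpha))U^k+\mc O(\alpha L/n)\,C^k+\mc O(\alpha^2/n)\,G^k$, using the unbiasedness $\mathbb{E}[\bar{\mb v}^k]=\tfrac{1}{n}\sum_i\nabla f_i(\mb x_i^k)$ and the identity $\bar{\mb y}^k=\bar{\mb v}^k$. The mixing contraction of $\ul W$ gives $C^{k+1}\leq\tfrac{1+\sigma^2}{2}C^k+\mc O(\alpha^2/(1-\sigma^2))\,G^k$, and a parallel bound on $G^{k+1}$ involves the variance of $\mb v^{k+1}-\mb v^k$. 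The \SVRG-specific step is the variance bound $\mathbb{E}\|\mb v_i^k-\nabla f_i(\mb x_i^k)\|^2\leq L^2\,\mathbb{E}\|\mb x_i^k-\bds\tau_i^k\|^2$, which follows from $L$-smoothness of each $f_{i,j}$ together with $\mathbb{E}\|X-\mathbb{E}X\|^2\leq\mathbb{E}\|X\|^2$; this injects $R^k$ directly into the $G$-recursion.

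Next, because $\bds\tau_i^k\equiv\mb x_i^{tT}$ is frozen throughout the inner loop, I would bound the drift by the cumulative movement
\begin{equation*}
R^k \leq T\textstyle\sum_{j=tT}^{k-1}\|\mb X^{j+1}-\mb X^j\|_F^2 \leq T\textstyle\sum_{j=tT}^{k-1}\big(\mc O(1)\,C^j+\mc O(\alpha^2)\,G^j+\mc O(\alpha^2 n)\,\|\bar{\mb y}^j\|^2\big),
\end{equation*}
and then use $\|\bar{\mb y}^j\|^2\lesssim L^2\,U^j+L^2 C^j/n+G^j/n$ to re-express $R^k$ in terms of past $\mb e^j$. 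Substituting back and telescoping across $j=tT,\ldots,(t+1)T-1$ yields a closed outer-loop recursion $\mb e^{(t+1)T}\leq\mb H(\alpha,T)\,\mb e^{tT}$. I would invoke the weighted-infinity-norm ($\mn\cdot_\infty^{\mb w}$) framework of Section~\ref{general} to show that, with $\alpha=\mc O((1-\sigma)^2/(LQ))$ and $T=\mc O(Q^2\log Q/(1-\sigma)^2)$, the induced norm of $\mb H(\alpha,T)$ is at most $0.7$. Unrolling over $t$ and using $\mathbb{E}\|\mb x_i^{tT}-\mb x^*\|^2\leq 2U^{tT}+2C^{tT}/n$ delivers the bound $\ul c\cdot 0.7^t$.

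The main obstacle is closing the feedback between the intra-loop drift $R^k$ and the error vector $\mb e^k$: the \SVRG~variance grows with $\|\mb x_i^k-\mb x_i^{tT}\|^2$, yet this distance is itself driven by the gradient tracker whose error is inflated by precisely that variance. Disentangling this coupling requires joint tuning of $\alpha$ and $T$ so that the per-iteration Jacobian (which need not be contractive, since the inner loop is mildly expanding at rate $\sim 1+\alpha L$) is dominated by the outer-loop contraction $(1-\mu\alpha)^T\lesssim e^{-\mu\alpha T}$; this is exactly what forces the product $\alpha T=\Theta(Q\log Q/(1-\sigma)^2)$ in the statement. Once the $0.7^t$ outer-loop contraction is established, the gradient-computation complexity follows by noting that each outer loop incurs $M$ full-gradient evaluations plus $2T$ component-gradient evaluations per node, so $\lceil\log(\ul c/\epsilon)/\log(1/0.7)\rceil$ outer loops yield the stated $\mc O\big((M+Q^2\log Q/(1-\sigma)^2)\log(1/\epsilon)\big)$ bound.
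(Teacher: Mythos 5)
Your overall architecture (lift to a network-level recursion, form a small linear matrix inequality in the consensus error, optimality gap, and gradient-tracking error, contract it over one outer loop in a weighted infinity norm, then count $M+2T$ gradients per outer loop) matches the paper's. However, there is a genuine gap at the single most important \SVRG-specific step: how the reference drift enters the variance bound. You bound $\mathbb{E}\|\mb v_i^k-\nabla f_i(\mb x_i^k)\|^2\leq L^2\mathbb{E}\|\mb x_i^k-\bds\tau_i^k\|^2$ and then control the drift by cumulative movement, $R^k\leq T\sum_{j=tT}^{k-1}\|\mb X^{j+1}-\mb X^j\|_F^2$. This bound is too loose to close the argument with the stated parameters. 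The outer loop can only contract by a constant factor if $\mu\alpha T\gtrsim 1$, which forces $\alpha L T=\Theta(Q\log Q)\gg 1$; your cumulative bound then gives $R^k\lesssim (\alpha L T)^2\,\mathbb{E}\|\ol{\mb x}^{tT}-\mb x^*\|^2$ (each step moves by roughly $\alpha\|\nabla F\|\lesssim\alpha L\|\ol{\mb x}^j-\mb x^*\|$), i.e., an inflation by $\Theta(Q^2\log^2 Q)$ over the true drift. Feeding this back through the variance term in the optimality-gap recursion produces a coefficient of order $\alpha^3 L^4 T^2/\mu=\Theta\left(LQ\log^2Q(1-\sigma)^2\alpha\right)\cdot\Theta(Q)$ on $\mathbb{E}\|\ol{\mb x}^{tT}-\mb x^*\|^2$, which is not $o(1)$ for large $Q$ under $\alpha=\mc O((1-\sigma)^2/(LQ))$, so the outer-loop matrix $\mb H(\alpha,T)$ cannot be shown to have norm $0.7$. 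There is also a secondary circularity you wave at: your bound $\|\ol{\mb y}^j\|^2\lesssim L^2U^j+L^2C^j/n+G^j/n$ omits the variance of $\ol{\mb v}^j$, which itself depends on $R^j$.

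The fix, and what the paper actually does, is to never introduce the drift at all: split the \SVRG~variance through the optimum, $\|\nabla f_{i,j}(\mb x_i^k)-\nabla f_{i,j}(\bds\tau_i^k)\|^2\leq 2L^2\|\mb x_i^k-\mb x^*\|^2+2L^2\|\bds\tau_i^k-\mb x^*\|^2$ (Lemma~\ref{var_bound_svrg}). Since $\bds\tau^k\equiv\mb x^{tT}$ throughout the inner loop, the $\bds\tau$-terms are exactly (a part of) the state vector at time $tT$, so the per-iteration inequality becomes a \emph{delayed} linear system $\mb u^{k+1}\leq J_\alpha\mb u^k+H_\alpha\mb u^{tT}$ with no accumulation over the inner loop. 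Unrolling gives $\mb u^{(t+1)T}\leq\big(J_\alpha^T+\sum_{l=0}^{T-1}J_\alpha^lH_\alpha\big)\mb u^{tT}$, and the paper bounds the two pieces separately: $\rho(J_\alpha)\leq 1-\mu\alpha/4$ via Lemma~\ref{rho_bound} (so $J_\alpha^T$ is made smaller than $0.04$ by the choice of $T$, after paying a factor $8Q^2$ for switching weighted norms), and $\mn{(I-J_\alpha)^{-1}H_\alpha}_\infty^{\mb q}\leq 0.66$ via an explicit adjugate computation. Note also that in this formulation the inner loop is \emph{not} ``mildly expanding'' as you suggest---$J_\alpha$ itself is a contraction; the only non-contractive ingredient is the delayed injection $H_\alpha\mb u^{tT}$, and that is precisely why the product $\alpha T$ must be tuned as in the statement.
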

Theorems~\ref{main_gtsaga} and~\ref{main_gtsvrg} lead to the following linear convergence rates for \textbf{\texttt{GT-SAGA}} and \textbf{\texttt{GT-SVRG}} on almost every sample path, following directly from Chebyshev's inequality and the Borel-Cantelli lemma; see Lemma~\ref{BC} for details.
\begin{cor}[Almost sure convergence of \textbf{\texttt{GT-SAGA}}]\label{as_gtsaga}
Let Assumptions~\ref{sc},~\ref{smooth} and~\ref{connect} hold. For the choice of the step-size~$\alpha$ in Theorem~\ref{main_gtsaga}, we have:~$\forall i\in\{1,\cdots,n\}$,
\begin{align*}
\mathbb{P}\left(\lim_{k\rightarrow\infty}\gamma_g^{-k}\left\|\mb{x}_i^k-\mb{x}^*\right\|^2 = 0\right) = 1,
\end{align*}
where~$\gamma_g = 1-\min\left\{\mc{O}\left(M^{-1}\right),\mc{O}\left(mM^{-1}(1-\sigma)^2 Q^{-2}\right)\right\}$.
\end{cor}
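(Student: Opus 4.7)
The plan is to convert the mean-square linear rate from Theorem~\ref{main_gtsaga} into almost-sure convergence at essentially the same geometric rate using Markov's (Chebyshev's) inequality followed by the first Borel--Cantelli lemma, which is precisely the mechanism packaged in the cited Lemma~\ref{BC}.

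First I would fix an auxiliary rate $\widetilde\gamma_g\in(\gamma_g,1)$ slightly larger than the $\gamma_g$ from Theorem~\ref{main_gtsaga}; for example, $\widetilde\gamma_g = \sqrt{\gamma_g}$ or any convex combination of $\gamma_g$ and $1$. Then, for arbitrary $\epsilon>0$ and any $i$, Markov's inequality together with the bound of Theorem~\ref{main_gtsaga} yields
\begin{equation*}
\mathbb{P}\big(\|\mb{x}_i^k-\mb{x}^*\|^2 > \epsilon\,\widetilde\gamma_g^{\,k}\big)
\;\leq\; \frac{\mathbb{E}\big[\|\mb{x}_i^k-\mb{x}^*\|^2\big]}{\epsilon\,\widetilde\gamma_g^{\,k}}
\;\leq\; \frac{c}{\epsilon}\Big(\frac{\gamma_g}{\widetilde\gamma_g}\Big)^{k}.
\end{equation*}

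Because $\gamma_g/\widetilde\gamma_g<1$, the right-hand side is summable in $k$. Applying the first Borel--Cantelli lemma, with probability one the event $\{\|\mb{x}_i^k-\mb{x}^*\|^2 > \epsilon\,\widetilde\gamma_g^{\,k}\}$ occurs for only finitely many $k$, so $\limsup_{k\to\infty}\widetilde\gamma_g^{-k}\|\mb{x}_i^k-\mb{x}^*\|^2 \leq \epsilon$ a.s. Taking a countable sequence $\epsilon_\ell\downarrow 0$ and intersecting the resulting probability-one events gives $\widetilde\gamma_g^{-k}\|\mb{x}_i^k-\mb{x}^*\|^2 \to 0$ almost surely. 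A union bound over the finitely many nodes $i\in\{1,\dots,n\}$ preserves the probability-one statement uniformly in $i$.

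The only subtlety—and the step I would be most careful about—is the bookkeeping that reconciles the rate in the corollary with the auxiliary $\widetilde\gamma_g$ used in the proof: strictly speaking the argument delivers convergence at any rate arbitrarily close to but strictly larger than $\gamma_g$, not at rate $\gamma_g$ itself. This is handled by absorbing the $\widetilde\gamma_g/\gamma_g$ slack into the constants hidden inside the $\mc{O}(\cdot)$ terms appearing in the definition of $\gamma_g$ in the corollary statement; equivalently, one may just write the conclusion in the form $\widetilde\gamma_g^{-k}\|\mb{x}_i^k-\mb{x}^*\|^2\to 0$ a.s. for any $\widetilde\gamma_g\in(\gamma_g,1)$, and then rename. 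Apart from this notational reconciliation, the argument is a direct and routine Markov--Borel--Cantelli chain, and no additional properties of the \GTSAGA~iterates beyond the mean-square bound of Theorem~\ref{main_gtsaga} are required.
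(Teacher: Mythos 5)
Your proposal is correct and follows essentially the same route as the paper: the paper proves Corollary~\ref{as_gtsaga} by invoking Lemma~\ref{BC}, whose proof is exactly your Markov/Chebyshev plus first Borel--Cantelli chain with an inflated rate $\gamma+\delta$, and it likewise absorbs the resulting slack into the $\mc{O}(\cdot)$ constants in the definition of $\gamma_g$ (the companion Corollary~\ref{as_gtsvrg} makes the $+\delta$ explicit instead). Your added bookkeeping --- the countable sequence $\epsilon_\ell\downarrow 0$ and the union bound over the $n$ nodes --- is a harmless refinement of details the paper leaves implicit.
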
 
\begin{cor}[Almost sure convergence of \textbf{\texttt{GT-SVRG}}]\label{as_gtsvrg}
Let Assumptions~\ref{sc},~\ref{smooth} and~\ref{connect} hold. For the choice of the step-size~$\alpha$ and the length~$T$ of the inner loop in Theorem~\ref{main_gtsvrg}, we have:~$\forall i\in\{1,\cdots,n\}$, 
\begin{align*}
\mathbb{P}\left(\lim_{t\rightarrow\infty}(0.7+\delta)^{-t}\left\|\mb{x}_i^{tT}-\mb{x}^*\right\|^2 = 0\right) = 1,
\end{align*} 
where~$\delta>0$ is an arbitrary small constant.
\end{cor}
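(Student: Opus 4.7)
The plan is to deduce the almost-sure rate from the mean-square bound in Theorem~\ref{main_gtsvrg} by combining a Markov/Chebyshev tail estimate with the first Borel--Cantelli lemma, exactly as indicated in the paper's pointer to Lemma~\ref{BC}. The starting point is the ready-made geometric bound $\mathbb{E}[\|\mb{x}_i^{tT}-\mb{x}^*\|^2]\leq \underline{c}\cdot 0.7^{t}$, which already has the same geometric flavor as the claimed sample-path rate; the entire task is therefore to convert ``expectation decays at rate $0.7$'' into ``each realization decays at rate $0.7+\delta$ for any $\delta>0$.''

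First, I would fix an arbitrary $\delta>0$ and an intermediate ratio, say $\eta:=0.7+\delta/2$, so that $0.7<\eta<0.7+\delta$. For each $t\geq 0$ and each fixed node $i$ define the event
\begin{equation*}
A_t^{(i)}\;=\;\bigl\{\|\mb{x}_i^{tT}-\mb{x}^*\|^2 \,>\, \eta^{t}\bigr\}.
\end{equation*}
By Markov's inequality applied to the nonnegative random variable $\|\mb{x}_i^{tT}-\mb{x}^*\|^2$, together with Theorem~\ref{main_gtsvrg},
\begin{equation*}
\mathbb{P}\bigl(A_t^{(i)}\bigr)\;\leq\;\frac{\mathbb{E}\bigl[\|\mb{x}_i^{tT}-\mb{x}^*\|^2\bigr]}{\eta^{t}}
\;\leq\;\underline{c}\left(\frac{0.7}{\eta}\right)^{t}.
\end{equation*}
Since $0.7/\eta<1$, the sequence $\{\mathbb{P}(A_t^{(i)})\}_{t\geq0}$ is summable.

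Next I would invoke the first Borel--Cantelli lemma (precisely what Lemma~\ref{BC} packages), which yields $\mathbb{P}\bigl(A_t^{(i)} \text{ i.o.}\bigr)=0$. Hence there exists an almost-sure random index $t_0(\omega)$ such that $\|\mb{x}_i^{tT}-\mb{x}^*\|^2\leq \eta^{t}$ for every $t\geq t_0(\omega)$. Multiplying by $(0.7+\delta)^{-t}$ gives
\begin{equation*}
(0.7+\delta)^{-t}\,\|\mb{x}_i^{tT}-\mb{x}^*\|^2\;\leq\;\left(\frac{\eta}{0.7+\delta}\right)^{t}\;\longrightarrow\;0
\end{equation*}
almost surely, because $\eta/(0.7+\delta)<1$ by construction. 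Taking a countable union over the $n$ nodes preserves the almost-sure statement, proving the corollary.

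There is no real obstacle here: the argument is a template application of Markov + Borel--Cantelli, and the only mild subtlety is the two-step buffering $0.7<\eta<0.7+\delta$, which is needed because Markov's inequality only converts a rate-$0.7$ expectation bound into a tail that is summable at any strictly larger rate, not at rate $0.7$ itself. This loss of an arbitrarily small exponent is precisely why the corollary is stated with $0.7+\delta$ rather than $0.7$.
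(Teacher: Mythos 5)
Your argument is correct and is essentially the paper's own proof: the paper's Lemma~\ref{BC} is exactly the Markov/Chebyshev tail bound plus first Borel--Cantelli packaging you describe, applied to $X_t=\|\mb{x}_i^{tT}-\mb{x}^*\|^2$ with $\gamma=0.7$ (your introduction of the intermediate rate $\eta=0.7+\delta/2$ is a cosmetic variant of the paper's step of fixing $\epsilon>0$ and letting it tend to zero, and the constant $\ul{c}$ is absorbed harmlessly in either version). No gaps.
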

We discuss some salient features of the proposed algorithms next and compare them with the state-of-the-art. 


\begin{rmk}[\textbf{Big data regime}]\label{R1}
When each node~has~a~large dataset such that ${M\approx m \gg Q^2(1-\sigma)^{-2}}$, we note that both \textbf{\texttt{GT-SAGA}} and \textbf{\texttt{GT-SVRG}}, achieve an $\epsilon$-optimal solution with a network-independent component gradient computation complexity of ${\mc{O}(M\log\frac{1}{\epsilon})}$ at each node; in contrast, centralized \SAGA~and~\SVRG, that process all data on a single node, require~${\mc{O}((nM+Q)\log\frac{1}{\epsilon})\approx\mc{O}(nM\log\frac{1}{\epsilon})}$ component gradient computations~\cite{SAGA,SVRG}. \textbf{\texttt{GT-SAGA}} and \textbf{\texttt{GT-SVRG}} therefore achieve a non-asymptotic, linear speedup in this big data regime, i.e., the number of component gradient computations required per node is reduced by a factor of~${1/n}$ compared with their centralized counterparts\footnote{We emphasize that linear speedup, although desirable and somewhat plausible, is not necessarily achieved for decentralized methods in general. 
In other words, the advantage of parallelizing an algorithm over~$n$ nodes may not naturally result into a performance improvement of~$n$.
}.
\end{rmk}

\begin{rmk}[\textbf{\textbf{\texttt{GT-SAGA}}  versus \textbf{\texttt{GT-SVRG}}}]\label{R2} 
It can be observed from Theorems~\ref{main_gtsaga} and~\ref{main_gtsvrg} that when data samples are unevenly distributed across the nodes, i.e.,~$\tfrac{M}{m}\gg1$, \textbf{\texttt{GT-SVRG}} achieves a lower gradient computation complexity than \textbf{\texttt{GT-SAGA}}. However, an uneven data distribution may adversely impact the practical implementation of \textbf{\texttt{GT-SVRG}}. This is because 
\textbf{\texttt{GT-SVRG}} requires a highly synchronized communication network as all nodes need to evaluate their local batch gradients every~$T$ iterations and cannot proceed to the next inner loop until all nodes complete this local computation. As a result, the nodes with smaller datasets have a relatively long idle time at the end of each inner loop that leads to an increase in overall wall-clock time. 
Indeed, the inherent trade-off between \textbf{\texttt{GT-SAGA}} and \textbf{\texttt{GT-SVRG}} is the network synchrony versus the gradient storage. For structured problems, where the component gradients can be stored efficiently, \textbf{\texttt{GT-SAGA}} may be preferred due to its flexibility of implementation and less dependence on network synchronization. Conversely, if the problem of interest is large-scale, i.e.,~$m$ is very large, and storing all component gradients is not feasible, \textbf{\texttt{GT-SVRG}} may become a more appropriate choice.  
\end{rmk}

\begin{rmk}[\textbf{Communication complexities}]\label{R8}
Note that since \textbf{\texttt{GT-SAGA}} incurs~$\mc{O}(1)$ communication round per node at each iteration, its total communication complexity is the same as its iteration complexity, i.e., $\mc{O}\big(\max\big\{M,\tfrac{M}{m}\tfrac{Q^2}{(1-\sigma)^2}\big\}\log\tfrac{1}{\epsilon}\big)$. For \textbf{\texttt{GT-SVRG}}, we note that a total number of $\mc{O}(\log\frac{1}{\epsilon})$ outer-loop iterations are required, where each outer-loop iteration incurs $T = \mc{O}\left(Q^2(1-\sigma)^{-2}\log Q\right)$ rounds of communication per node, leading to a total communication complexity of $\mc{O}\left(Q^2(1-\sigma)^{-2}\log Q\log\frac{1}{\epsilon} \right)$. Clearly, in a big data regime where each node has a large dataset, \textbf{\texttt{GT-SVRG}} achieves a lower communication complexity than \textbf{\texttt{GT-SAGA}}.
\end{rmk}

\begin{rmk}[\textbf{Comparison with Related Work}]
Existing decentralized variance-reduced (VR) gradient methods include: DSA~\cite{DSA} that integrates EXTRA~\cite{EXTRA} with SAGA~\cite{SAGA} and was the first decentralized VR method; DAVRG that combines Exact Diffusion~\cite{Exact_Diffusion} and AVRG~\cite{AVRG_ICASSP}; DSBA~\cite{DSBA} that uses proximal mapping~\cite{point-SAGA} to accelerate DSA; Ref.~\cite{edge_DSA} that applies edge-based method~\cite{edge_AL} to DSA; and ADFS~\cite{ADFS} that is a decentralized version of the accelerated randomized proximal coordinate gradient method~\cite{APCG} based on the dual of Problem P1. 
Both~\textbf{\texttt{GT-SAGA}} and~\textbf{\texttt{GT-SVRG}} improve upon the convergence rates in terms of the joint dependence on~$Q$ and~$m$ for these methods, especially in the ``big data" scenarios where~$m$ is very large, with the exception of DSBA and ADFS. We note that DSBA~\cite{DSBA} and ADFS~\cite{ADFS}, both achieve better a gradient computation complexity albeit at the expense of computing the proximal mapping of a component function at each iteration that is in general very expensive. Another recent work~\cite{Network-DANE} considers gradient tracking and variance reduction and proposes a decentralized SVRG type algorithm. 
However, the convergence of the decentralized SVRG in~\cite{Network-DANE} is only established when the local functions are sufficiently similar.
In contrast, \textbf{\texttt{GT-SAGA}} and \textbf{\texttt{GT-SVRG}} proposed in this paper 
achieve accelerated linear convergence for arbitrary local functions and are robust to the heterogeneity of local functions and data distributions.
Finally, we emphasize that all existing decentralized VR methods require symmetric weights and thus undirected networks. In contrast,~\textbf{\texttt{GT-SAGA}} and~\textbf{\texttt{GT-SVRG}} only require doubly stochastic weights and therefore can be implemented over directed graphs that admit doubly stochastic weights~\cite{weight_balance_digraph}, providing a more flexible topology design.
\end{rmk}


\begin{figure*}
\centering
\includegraphics[width=2.15in]{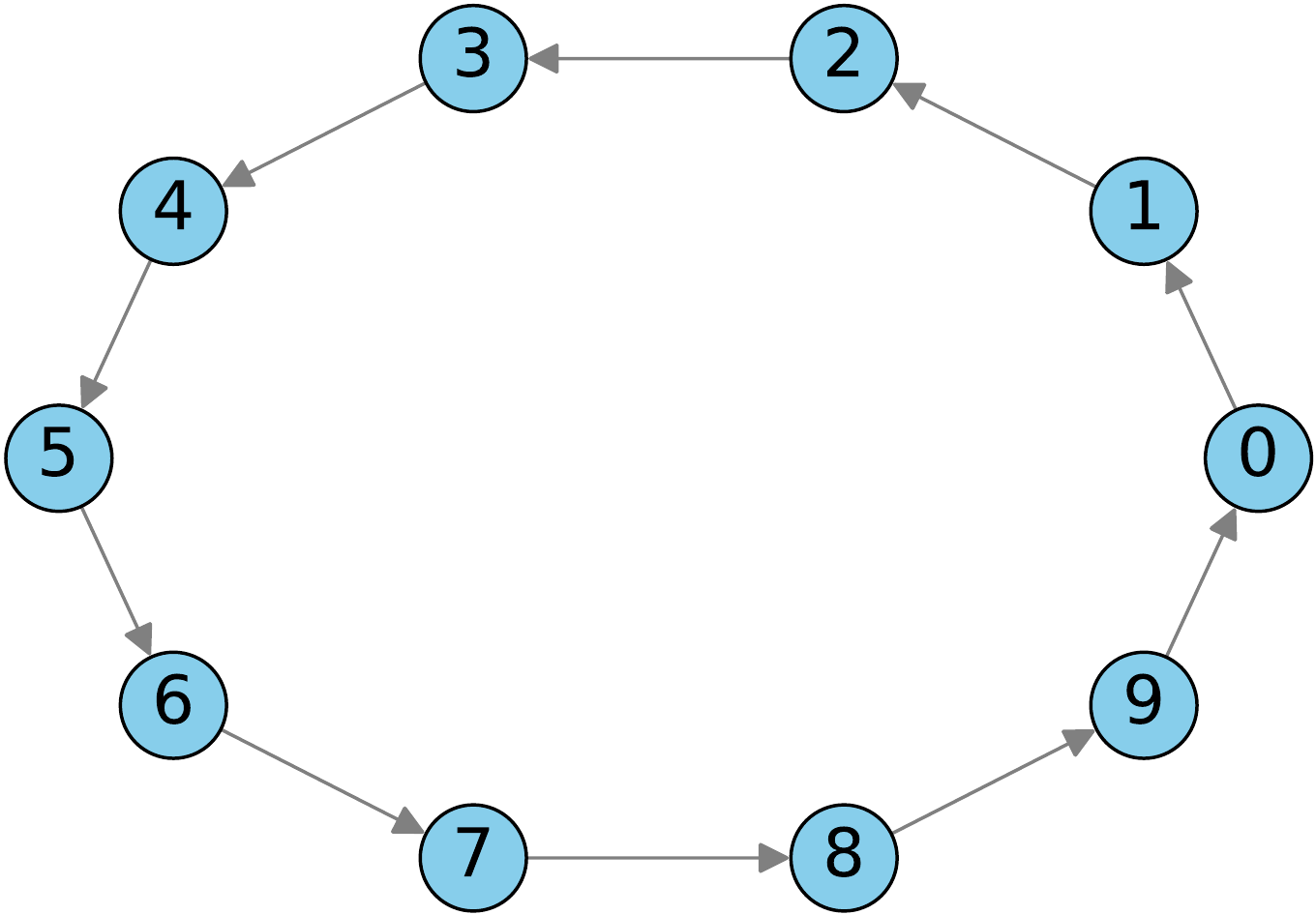}\qquad
\includegraphics[width=2.15in]{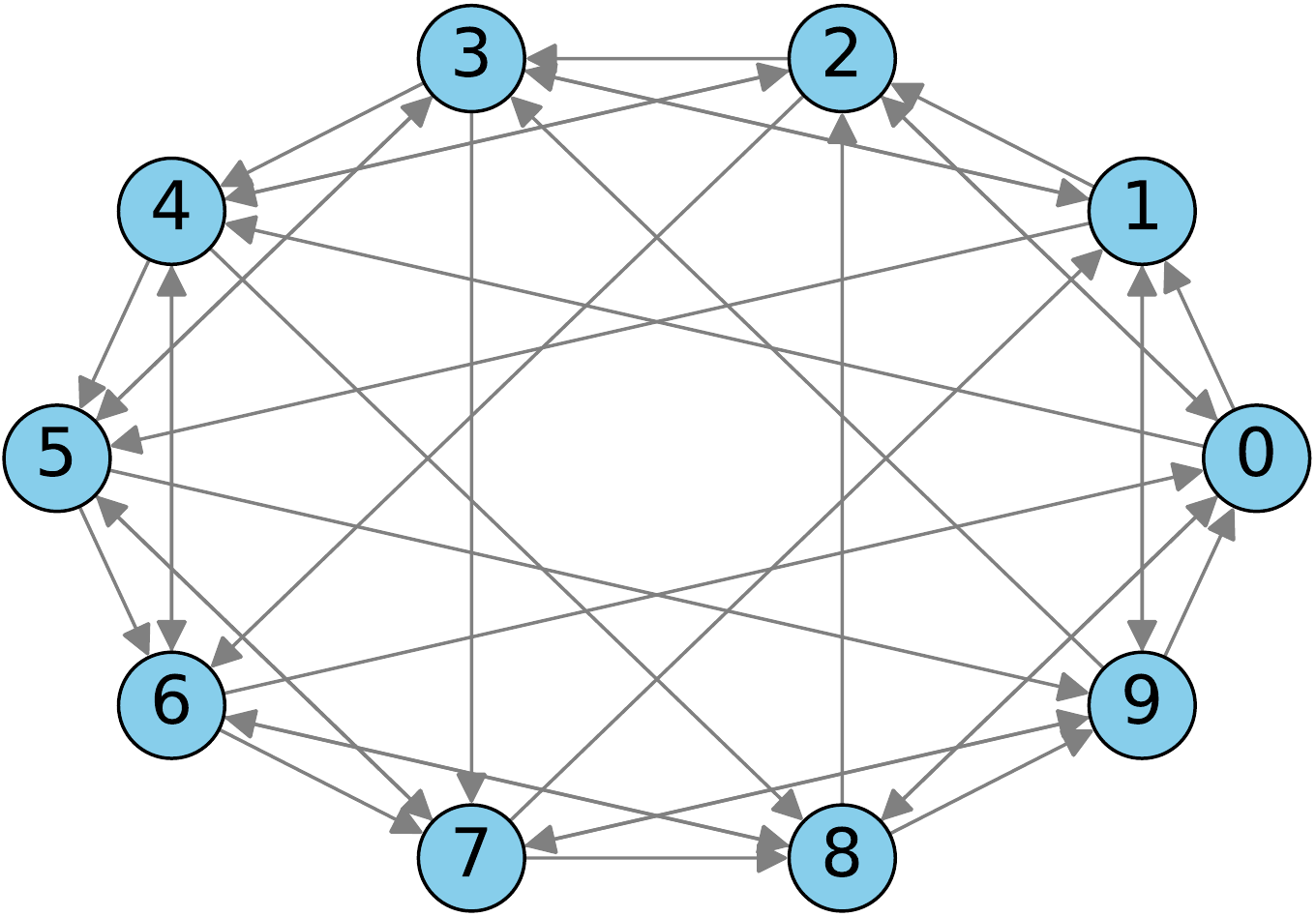}\qquad
\includegraphics[width=2.15in]{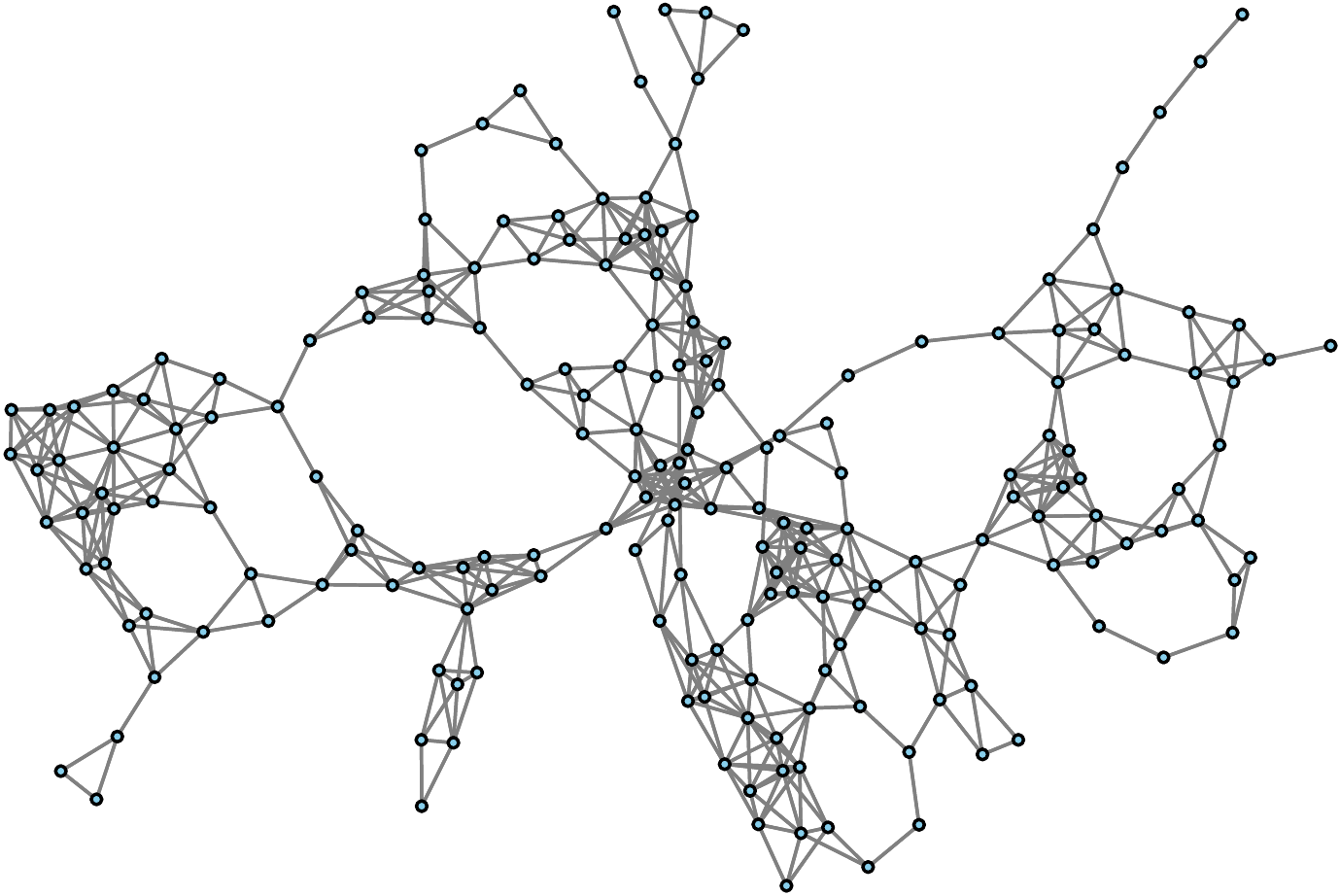}
\caption{The directed ring graph with~$10$ nodes, directed exponential graph with~$10$ nodes, and an undirected geometric graph with~$200$ nodes.}
\label{graphs}
\end{figure*}

\section{Numerical Experiments} \label{simulation}
In this section, we numerically demonstrate the convergence behavior of \textbf{\texttt{GT-SAGA}} and \textbf{\texttt{GT-SVRG}} under different regimes of interest and compare their performances with the-state-of-the-art decentralized stochastic first-order algorithms under different graph topologies and datasets. We consider a decentralized training problem where a network of~$n$ nodes with~$m$ data samples locally at each node cooperatively finds a regularized logistic regression model for binary classification:
\begin{align*}
F(\mb{x}) 
= \frac{1}{n}\sum_{i=1}^{n}\frac{1}{m}\sum_{j=1}^{m}\log\left[1+e^{-(\mb{x}^\top\bds{\theta}_{ij})\xi_{ij}}\right]+\frac{\lambda}{2}\|\mb{x}\|_2^2, \nonumber
\end{align*}
where~${\bds{\theta}_{ij}\in\mathbb{R}^{p}}$ denotes the feature vector of the~$j$-th data sample at the~$i$-th node,~${\xi_{ij}\in\{-1,+1\}}$ is the corresponding binary label, and~$\lambda$ is a regularization parameter to prevent overfitting of the training data. The datasets in question are summarized in Table~\ref{datasets} and all feature vectors are normalized to be unit vectors, i.e.,~$\|\bds{\theta}_{ij}\| = 1, \forall i,j$. The graph topologies under considerations, shown in Fig~\ref{graphs}, are directed ring graphs, directed exponential graphs, and undirected nearest-neighbor geometric graphs, all with self loops. We note that the directed ring graph has the weakest connectivity among all strongly-connected graphs; directed exponential graphs, where each node sends information to the nodes~$2^0,2^1,2^2,\cdots$ hops away, are sparse yet well-connected and therefore are often preferable when one has the freedom to design the graph topology; undirected nearest-neighbor geometric graphs, where two nodes are connected if they are in physical vicinity, are weakly-connected and often arise in ad hoc settings such as robotics swarms, IoTs, and edge computing networks. The doubly stochastic weights for directed ring and exponential graphs are chosen as uniform weights, while the weights for geometric graphs are generated by the Metropolis rule~\cite{tutorial_nedich}. The parameters of all algorithms in all cases are manually tuned for best performance. We characterize the performance of the decentralized optimization methods in question in terms of the optimality gap~${\frac{1}{n}\sum_{i=1}^n\left(F(\mb{x}_k^i)-F(\mb{x}^*)\right)}$ and model accuracy on the test data sets over epochs, where we assume that each node possesses the same number~$m$ of data samples and one epoch represents~$m$ gradient computations per node.

\begin{table}[!ht]
\caption{Summary of datasets used in numerical experiments. All datasets are available in LIBSVM~\cite{LIBSVM}.}
\begin{center}
\begin{tabular}{|c|c|c|c|}
\hline
\textbf{Dataset} & \textbf{train} ($N = nm$)  & \textbf{dimension} ($p$) & \textbf{test}\\ \hline
Fashion-MNIST & $10,\!000$ & $784$ & $4,\!000$  \\ \hline
Covertype & $400,\!000$ & $54$ & $181,\!012$ \\ \hline
CIFAR-10 & $10,\!000$ & $3,\!072$ & $2,\!000$ \\ \hline
Higgs &  $90,\!000$ & $28$ & $8,\!050$   \\ \hline
a9a & $32,\!560$  & $123$ & $16,\!282$ \\ \hline
w8a & $49,\!740$ & $300$ & $14,\!960$ \\ \hline
\end{tabular}
\end{center}
\label{datasets}
\end{table}

\subsection{Big data regime: Non-asymptotic network-independence convergence and linear speedup}
In this subsection, we demonstrate the convergence behavior of \textbf{\texttt{GT-SAGA}} and \textbf{\texttt{GT-SVRG}} in the big data regime, i.e.,~${m\approx Q^2(1-\sigma)^{-2}}$. To this aim, we choose~$500,\!000$ training samples from the Covertype dataset, equally distributed in a network of~${n=10}$ nodes such that each node has~${m = 50,\!000}$ data samples and set the regularization parameter as~${\lambda = 0.01}$ that leads to~${Q \approx 25}$, where~$Q$ is the condition number of~$F$. We test the performance of \textbf{\texttt{GT-SAGA}} and \textbf{\texttt{GT-SVRG}} over different graph topologies, i.e., the directed ring, the directed exponential, and the complete graph with~$10$ nodes; the second largest singular eigenvalues of the weight matrices associated with these three graphs are~${\sigma = 0.951,0.6,0}$, respectively. It can be verified that the big data condition holds for the optimization problem defined on these three graphs. The experimental results are shown in Fig.~\ref{speedup} (left and middle) and we observe that, in this big data regime, the convergence rates of \textbf{\texttt{GT-SAGA}} and \textbf{\texttt{GT-SVRG}} are not affected by the network topology.  We next illustrate the speedup of \textbf{\texttt{GT-SAGA}} and \textbf{\texttt{GT-SVRG}} compared with their centralized counterparts. The speedup is characterized as the ratio of the number of component gradient computations required for centralized \SAGA~and \SVRG~that execute on \emph{a single node} over the number of component gradient computations required \emph{at each node} for \textbf{\texttt{GT-SAGA}} and \textbf{\texttt{GT-SVRG}} to achieve the optimality gap of~$10^{-13}$. It can be observed in Fig~\ref{speedup} (right) that linear speedup is achieved for both methods.

\begin{figure*}
\centering
\includegraphics[width=2.25in]{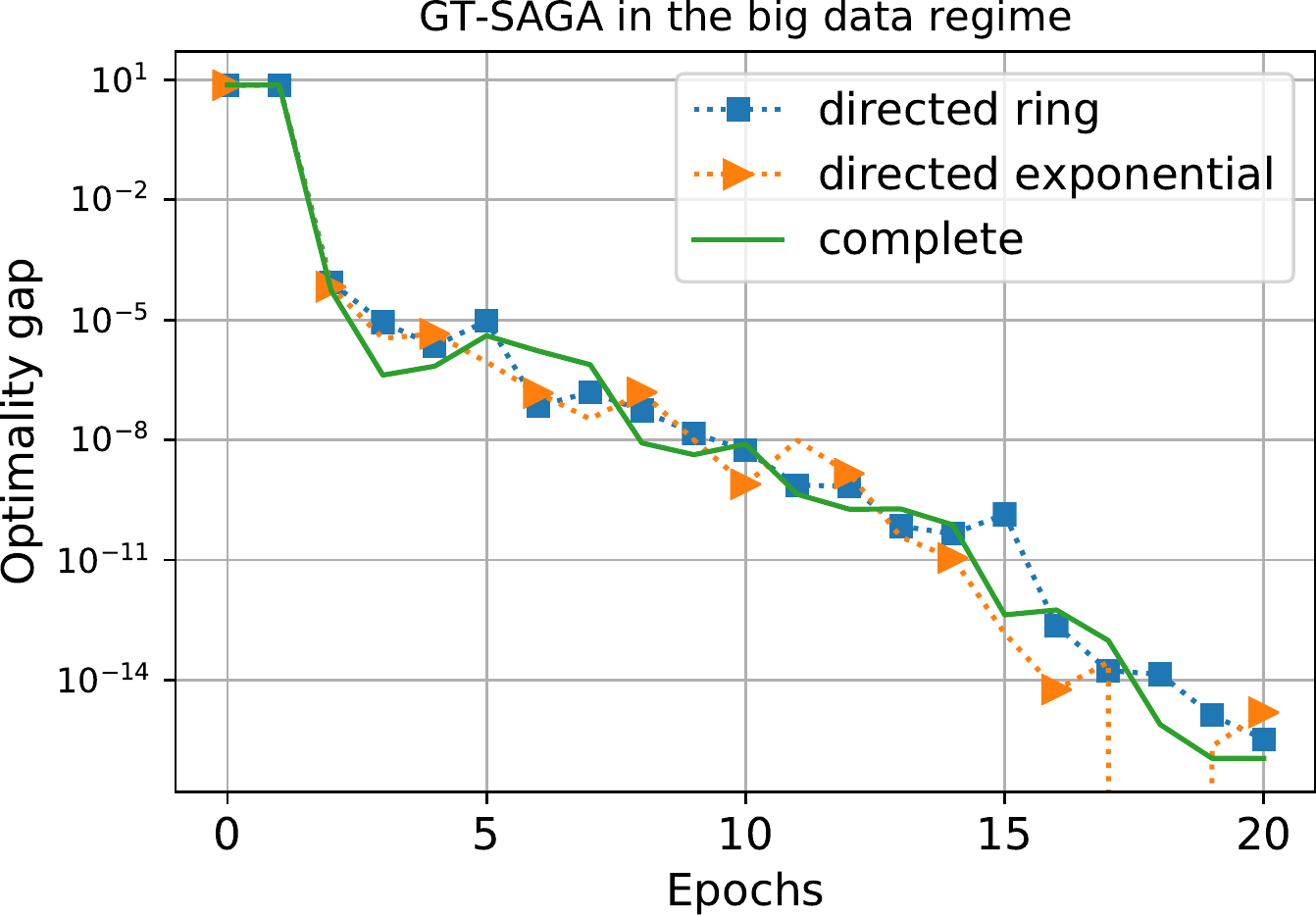}
\includegraphics[width=2.25in]{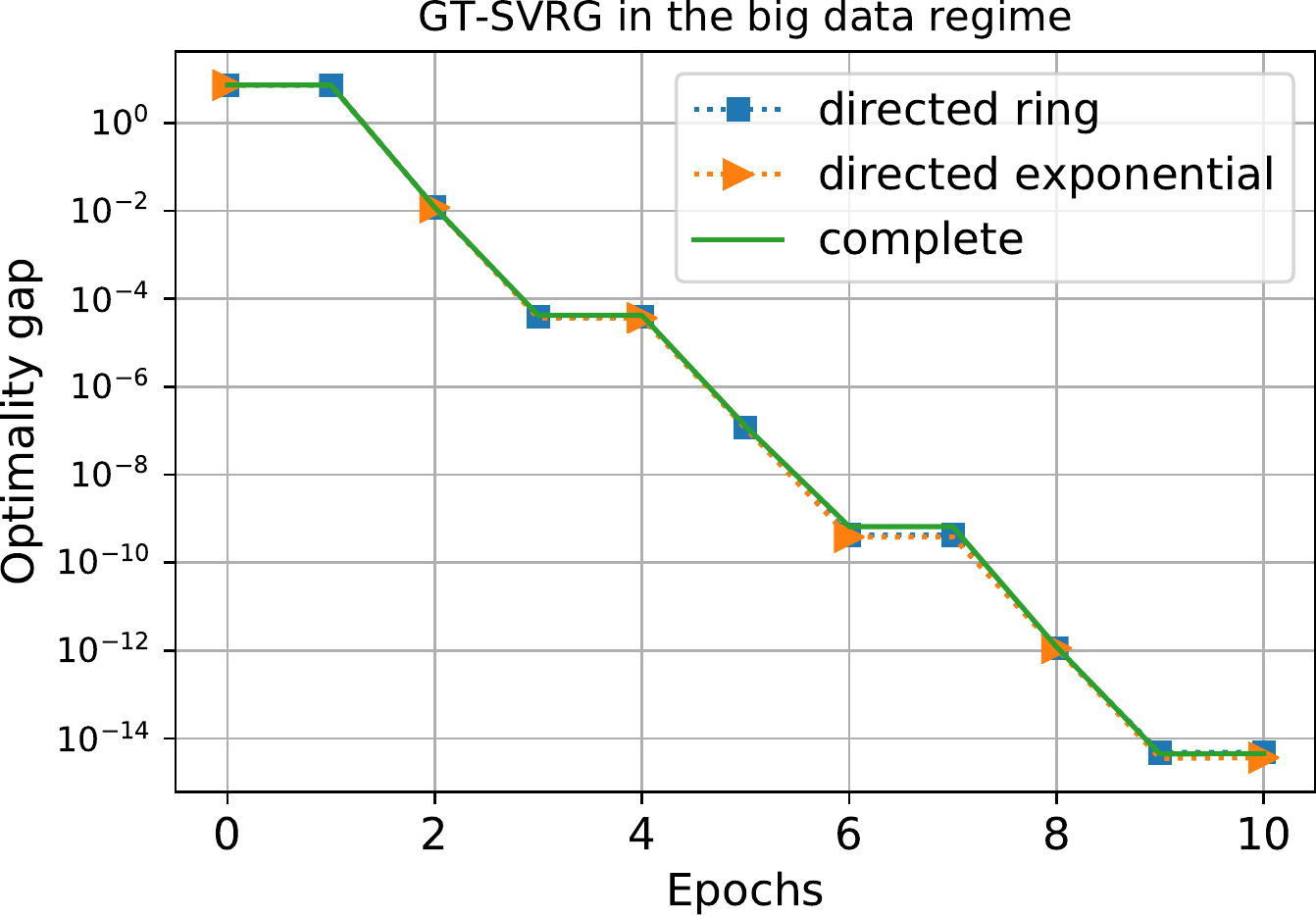}
\includegraphics[width=2.18in]{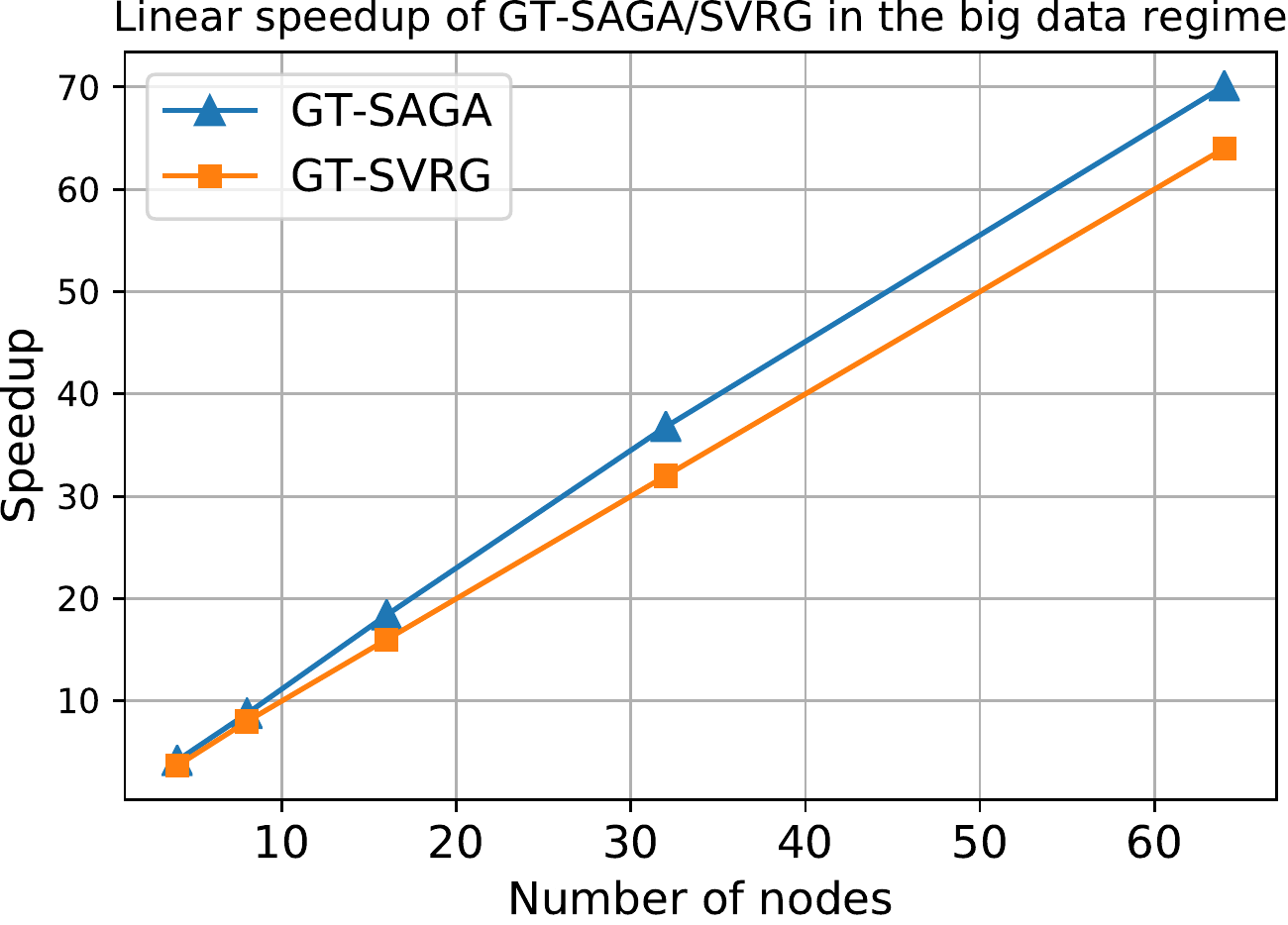}
\caption{The convergence behavior of \textbf{\texttt{GT-SAGA}} and \textbf{\texttt{GT-SVRG}} in the big data regime: (Left and Middle) Non-asymptotic, network-independent convergence; (Right) Linear speedup with respect to centralized \textbf{\texttt{SAGA}} and \textbf{\texttt{SVRG}} that process all data on a single node.}
\label{speedup}
\end{figure*}

\begin{figure*}
\centering
\includegraphics[width=2.25in]{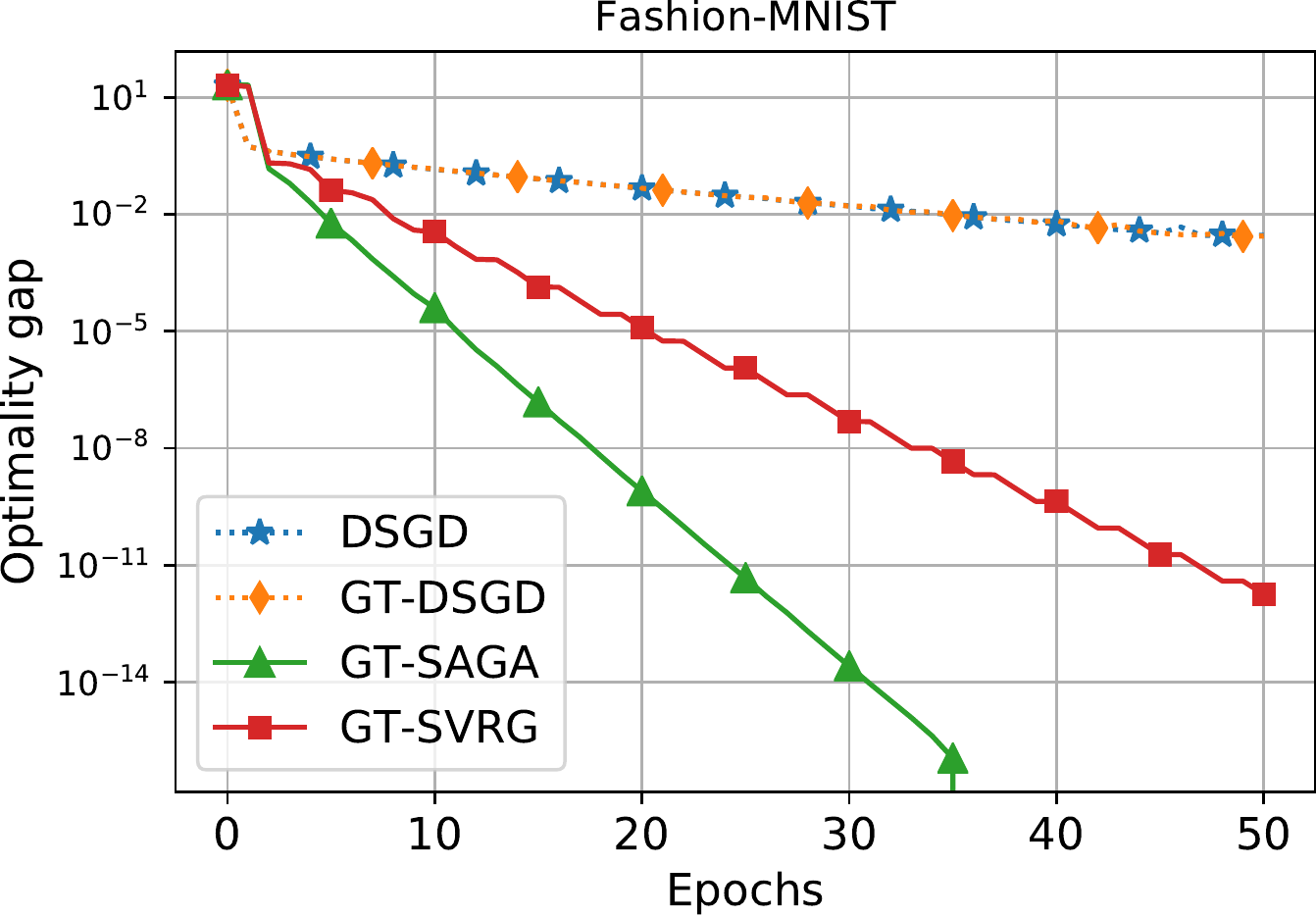}
\includegraphics[width=2.25in]{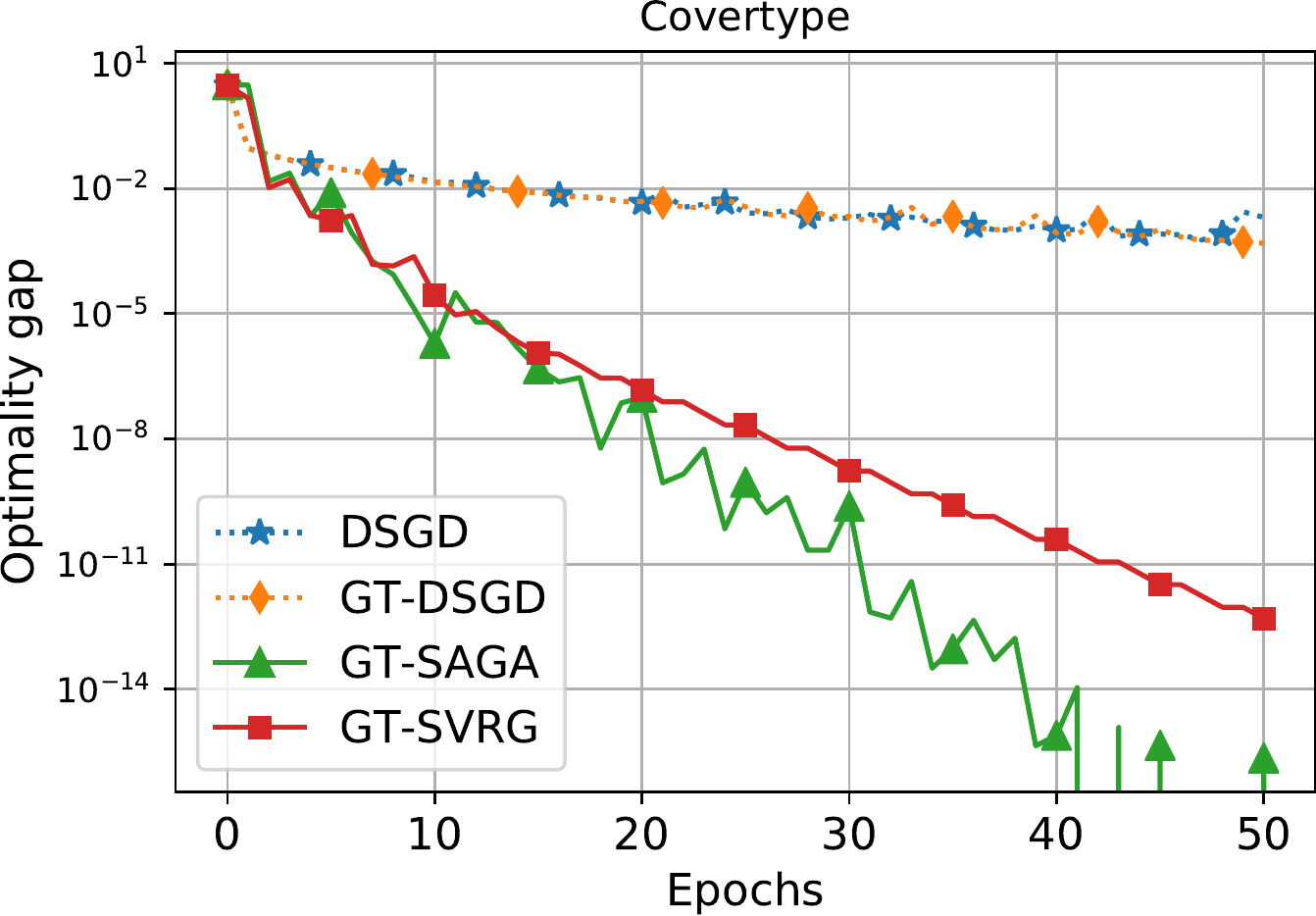}
\includegraphics[width=2.25in]{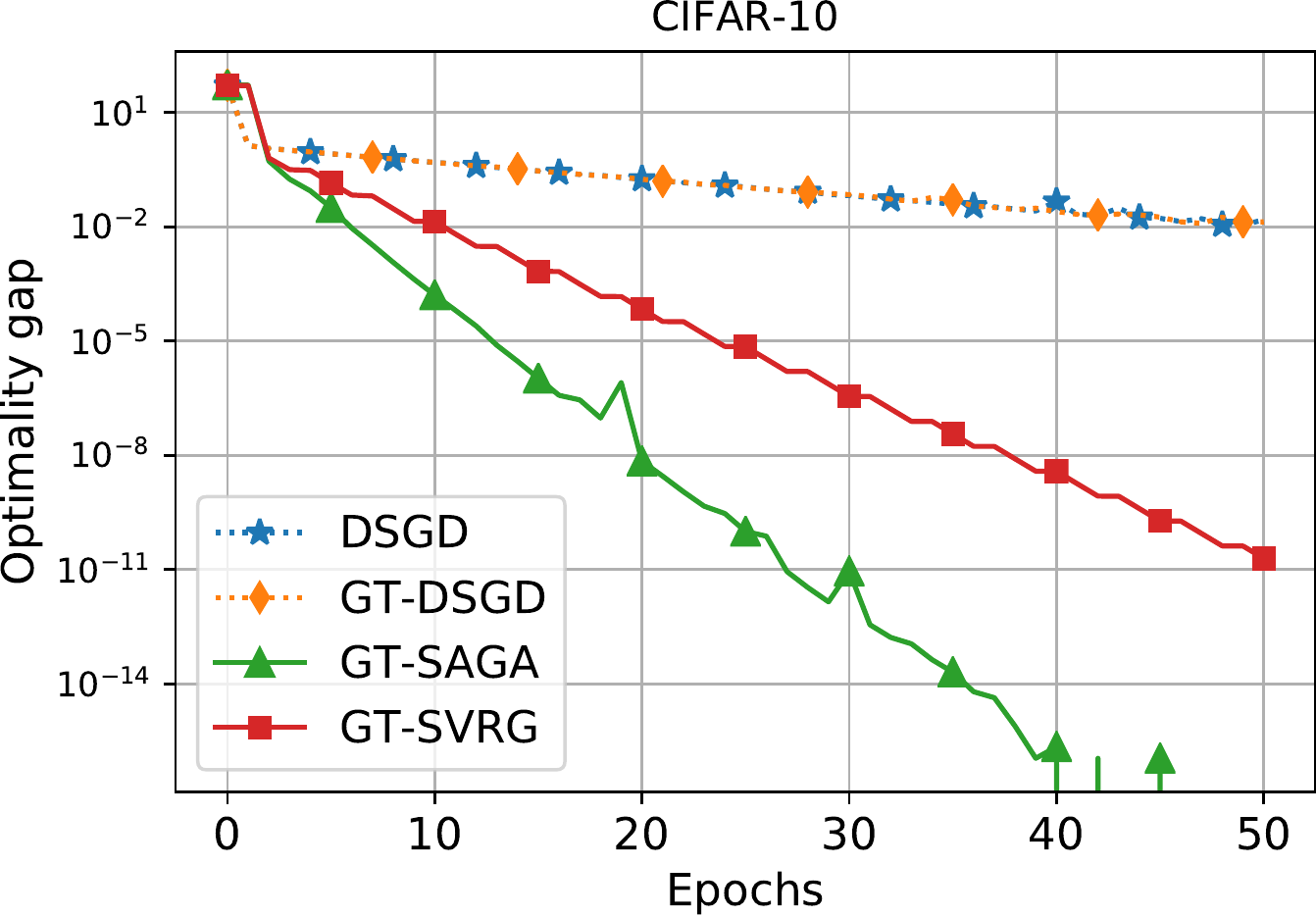}\\
\vspace{0.2cm}
\includegraphics[width=2.25in]{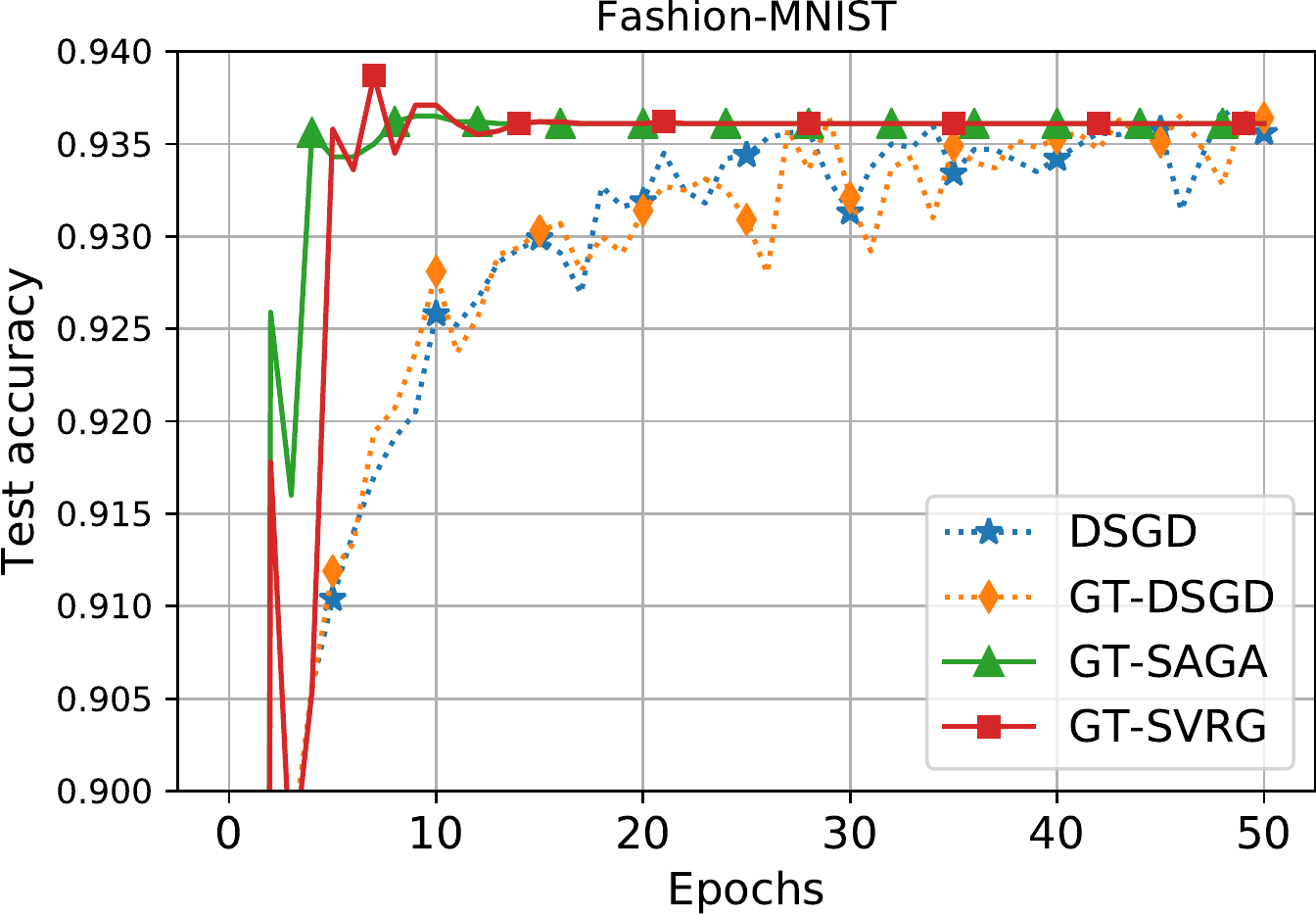}
\includegraphics[width=2.25in]{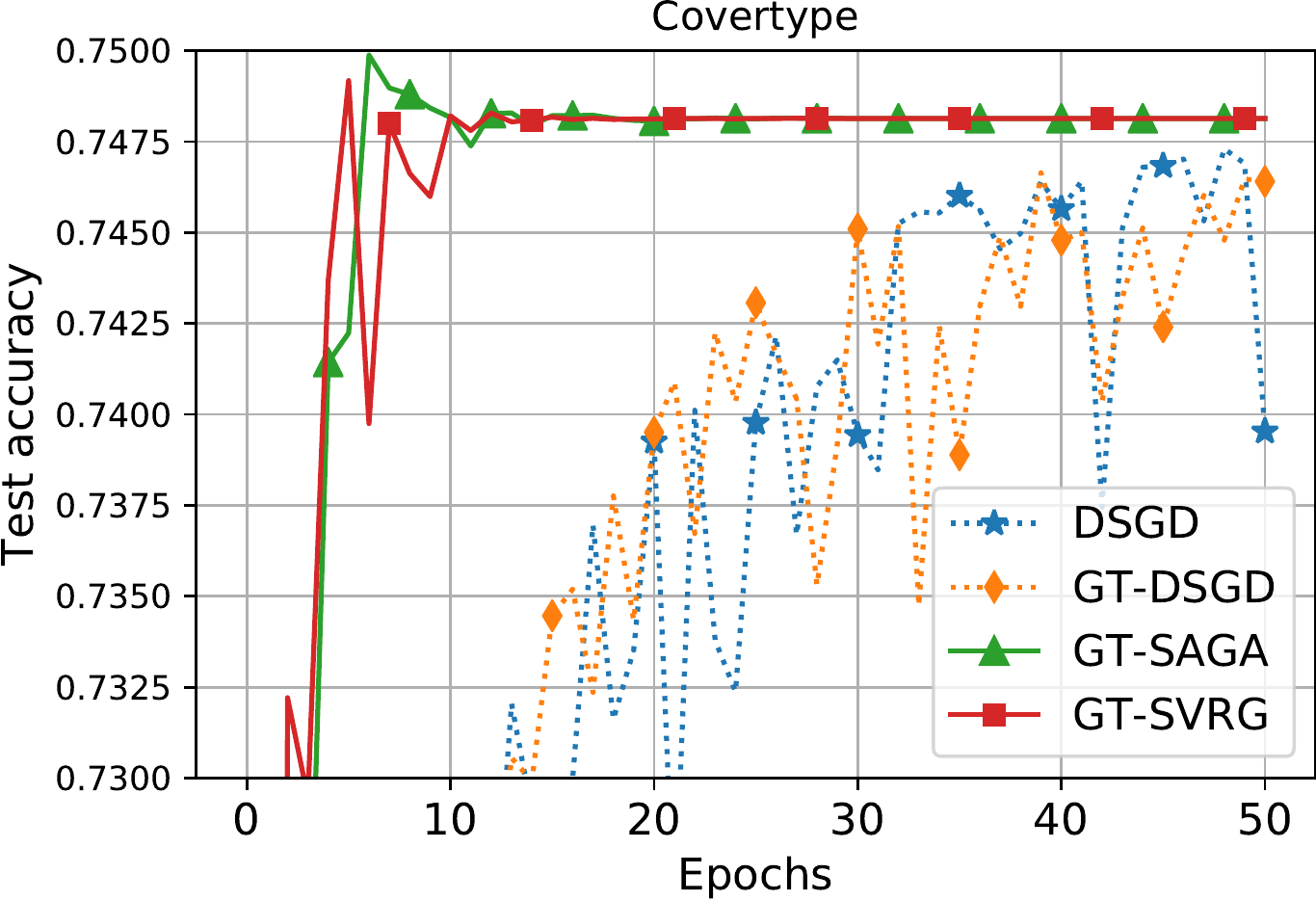}
\includegraphics[width=2.25in]{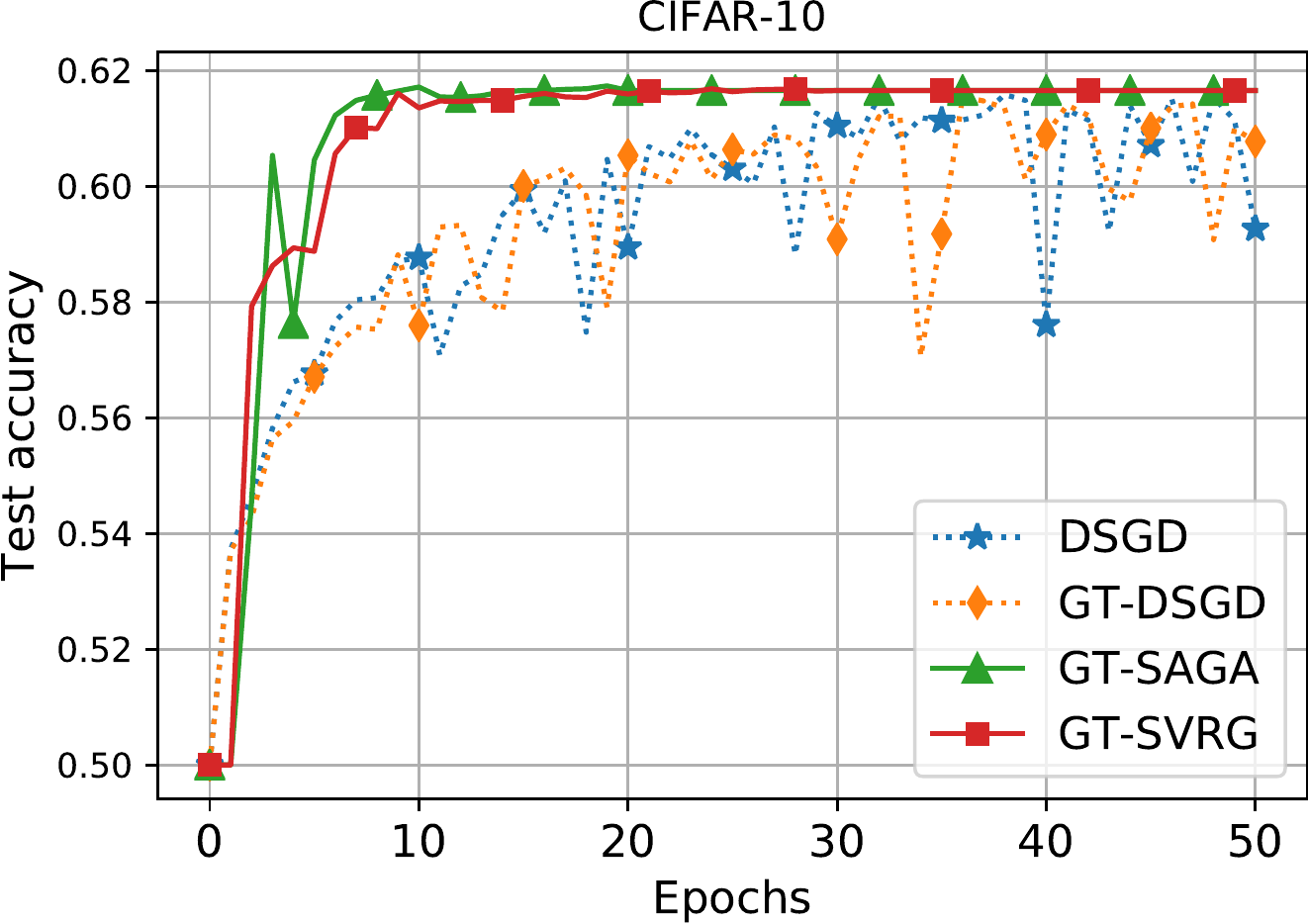}
\caption{Performance comparison of~\textbf{\texttt{GT-SAGA}} and~\textbf{\texttt{GT-SVRG}} with~\textbf{\texttt{DSGD}} and~\textbf{\texttt{GT-DSGD}} on the directed exponential graph with~${n=10}$ nodes over the Fashion-MNIST, Covertype, and CIFAR-10 datasets. The top row shows the optimality gap, while the bottom row shows the corresponding test accuracy.}
\label{dir_all_1}
\end{figure*}

\begin{figure*}
\centering
\includegraphics[width=2.25in]{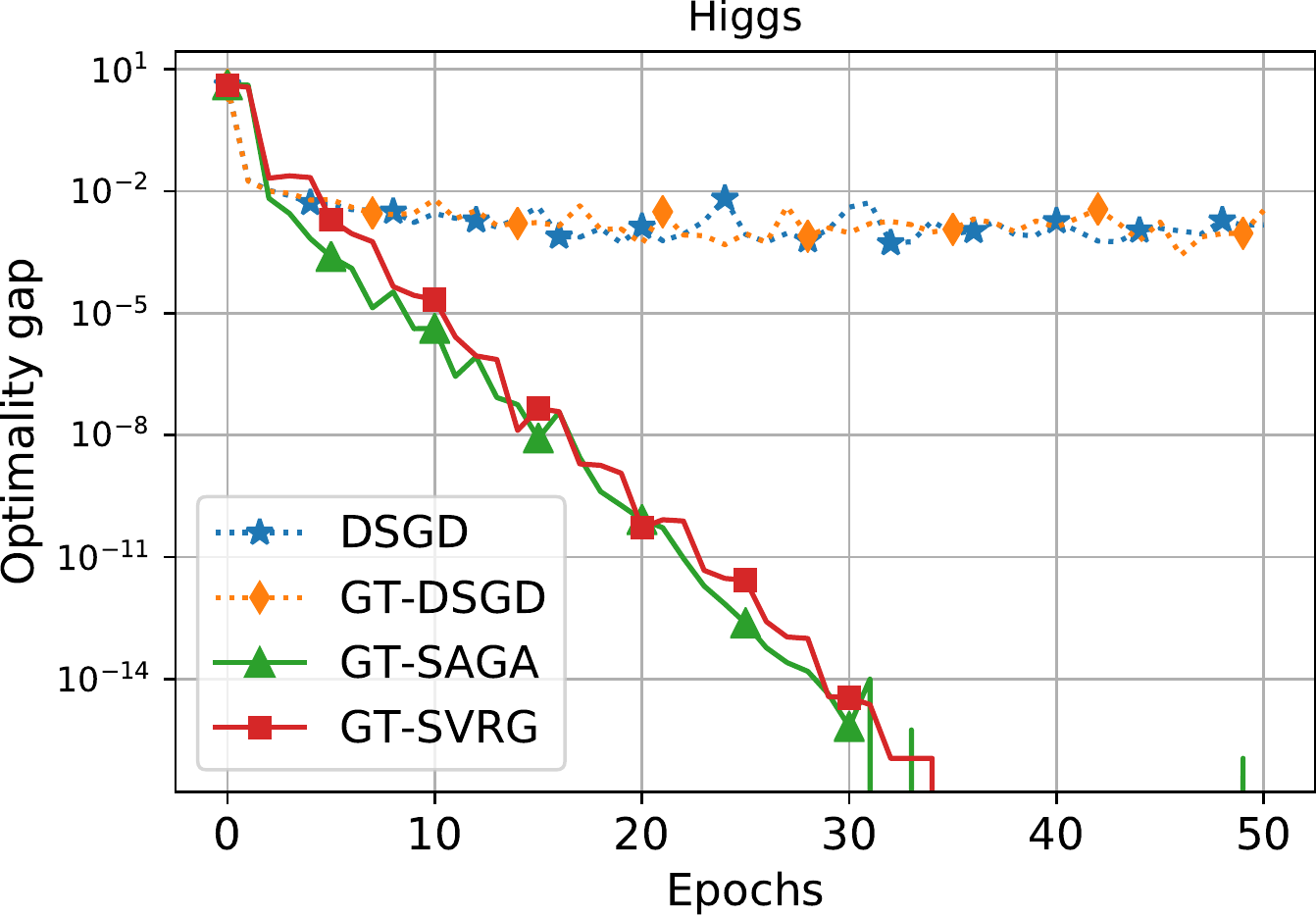}
\includegraphics[width=2.25in]{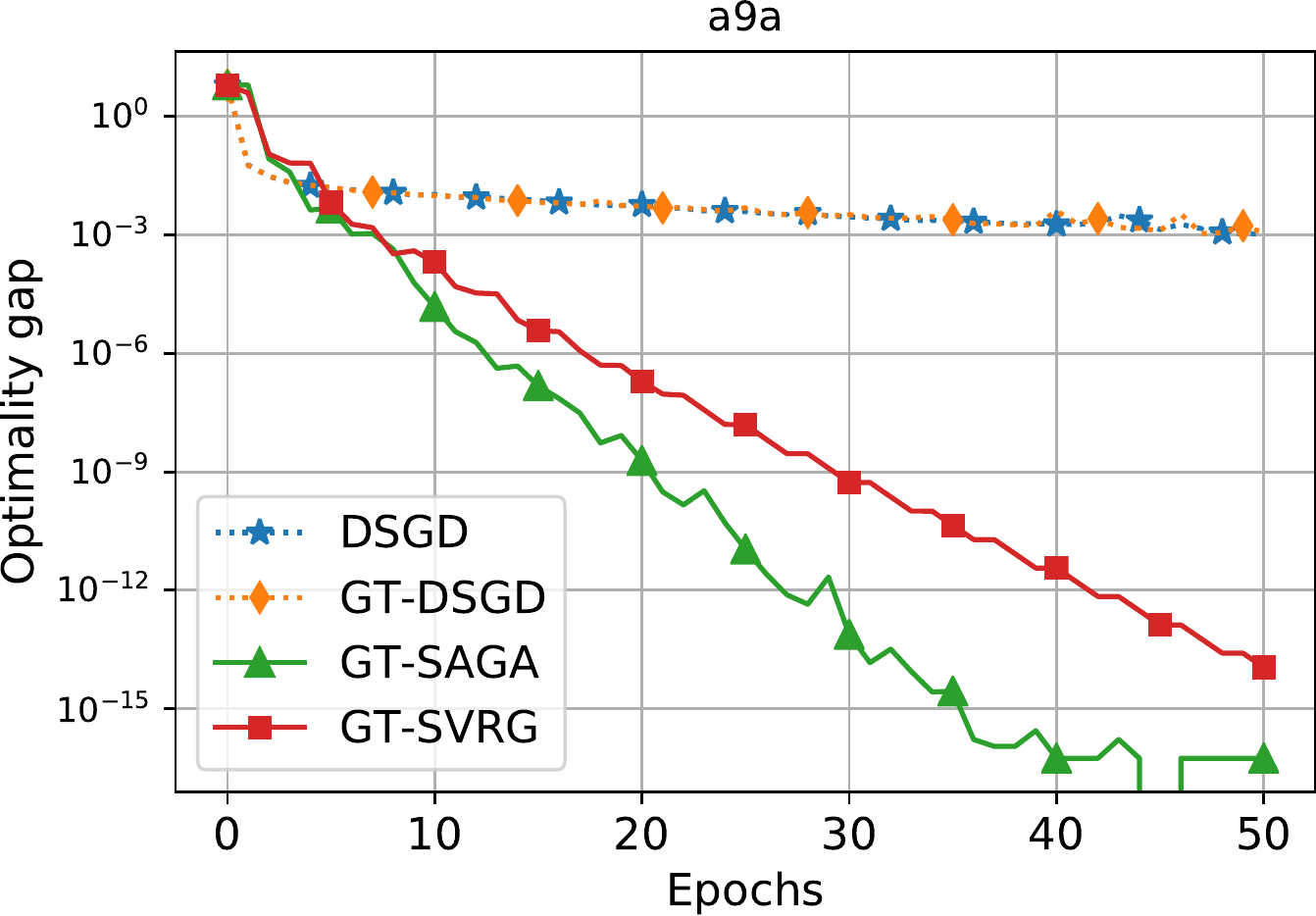}
\includegraphics[width=2.25in]{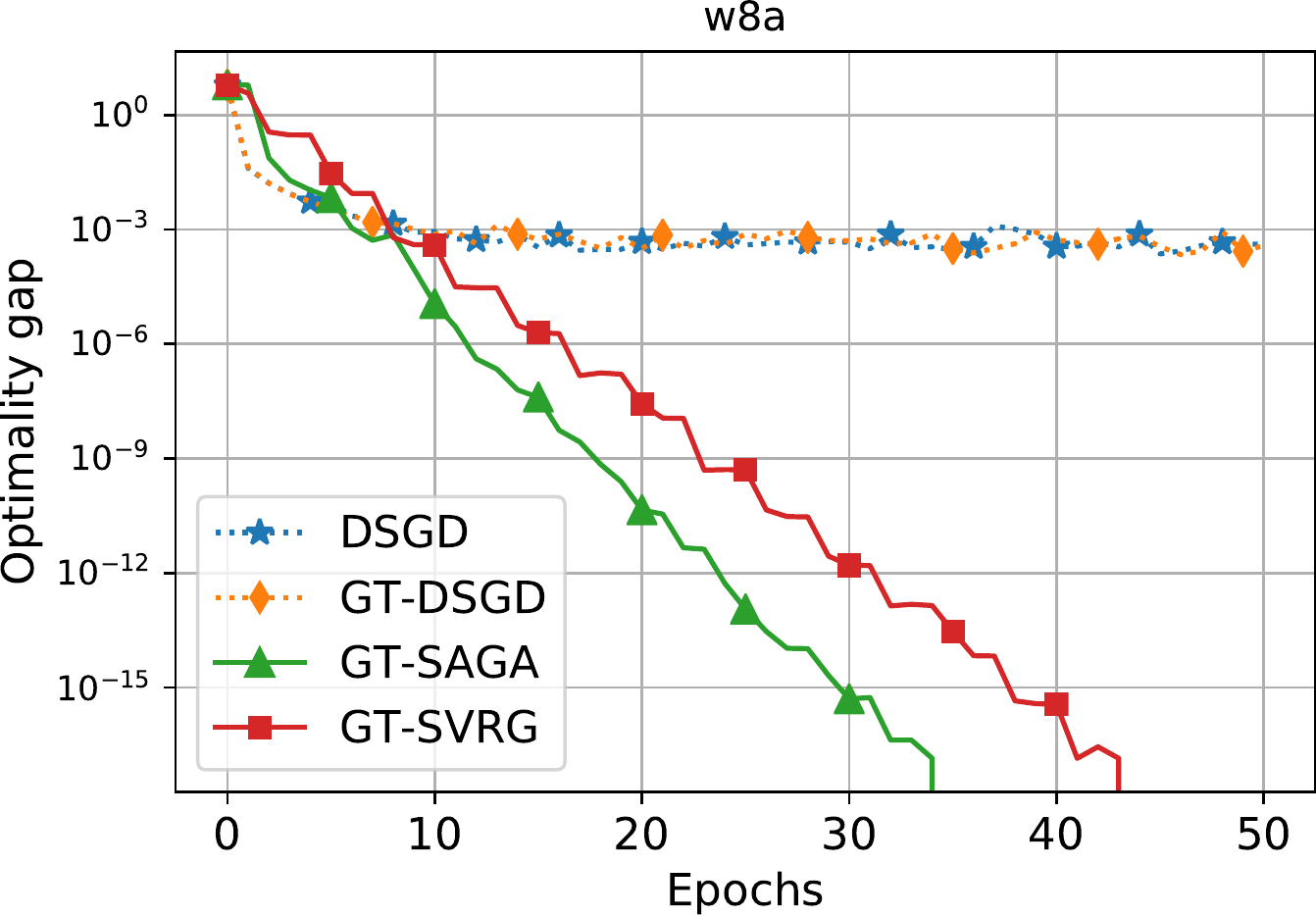}\\
\vspace{0.2cm}
\includegraphics[width=2.25in]{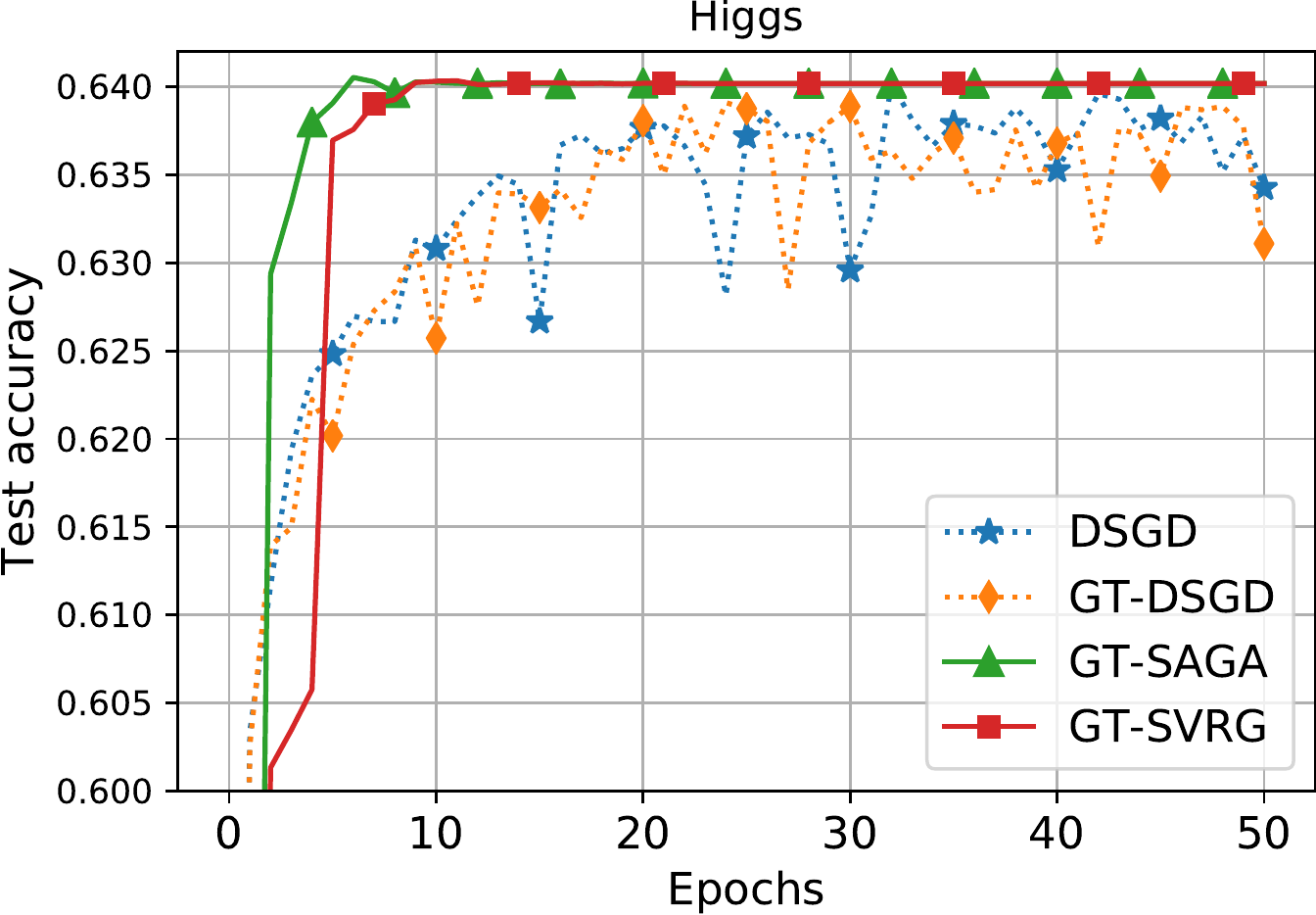}
\includegraphics[width=2.25in]{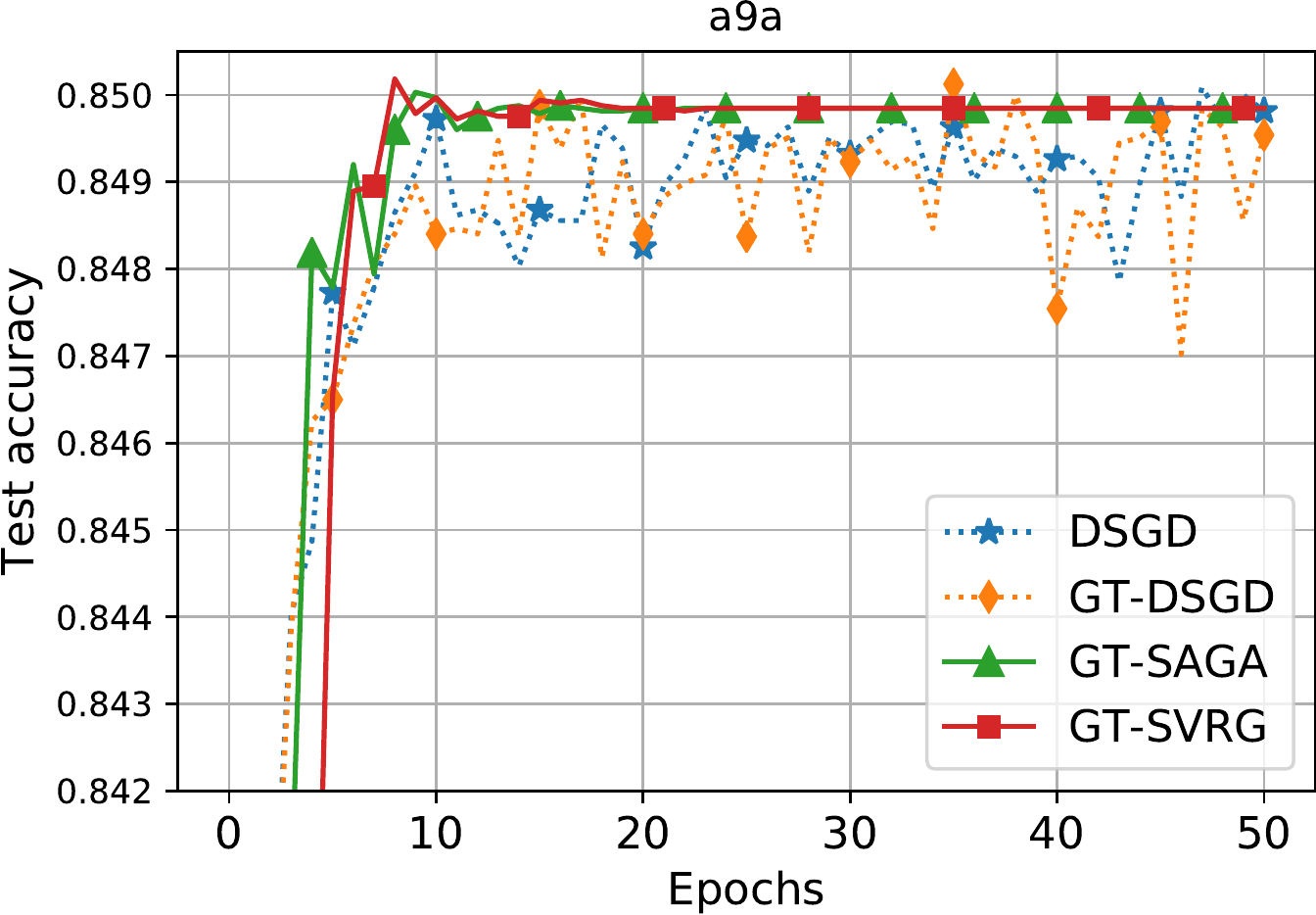}
\includegraphics[width=2.25in]{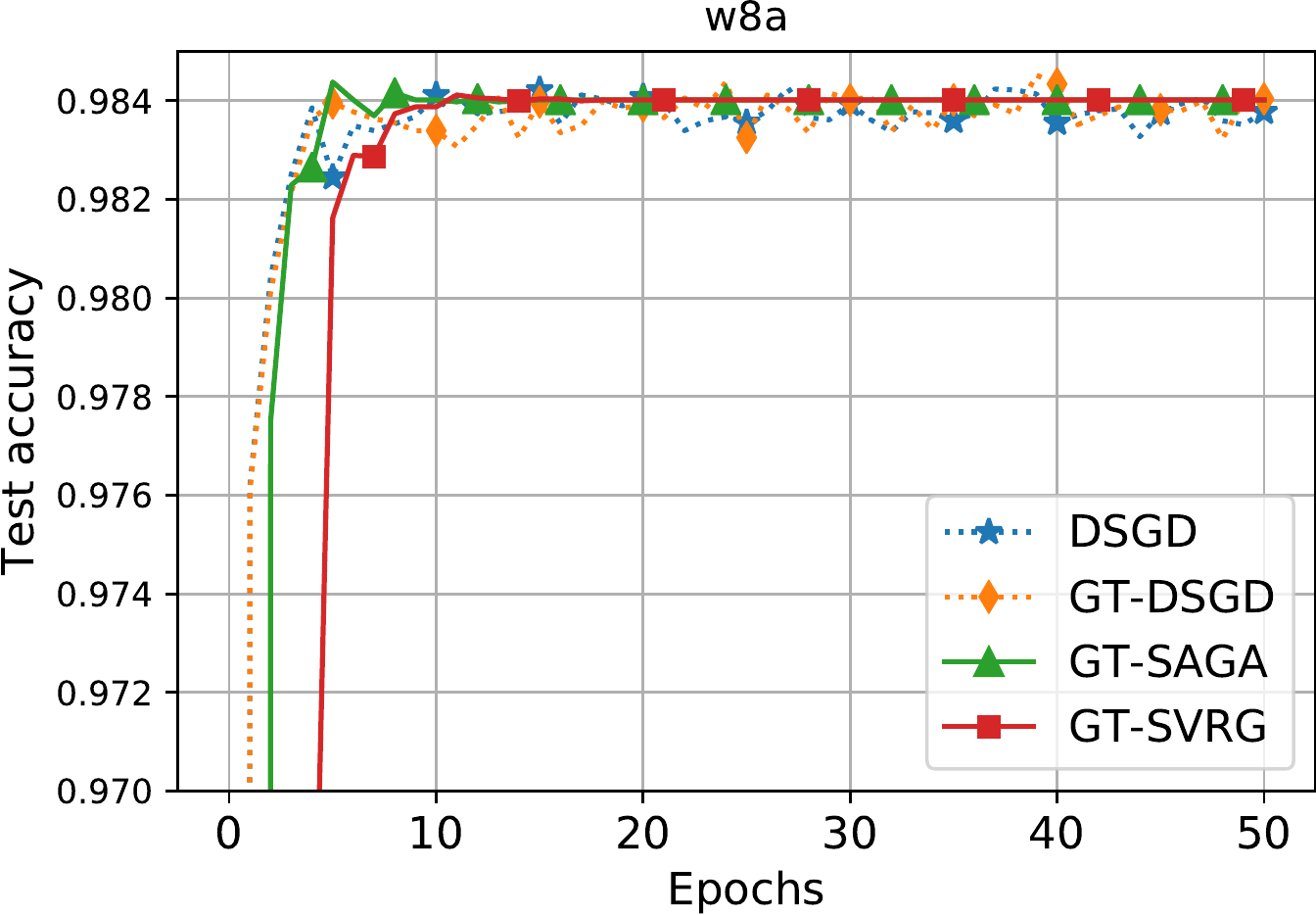}
\caption{Performance comparison of~\textbf{\texttt{GT-SAGA}} and~\textbf{\texttt{GT-SVRG}} with~\textbf{\texttt{DSGD}} and~\textbf{\texttt{GT-DSGD}} on the directed exponential graph with~${n=10}$ nodes over the Higgs, a9a, and w8a datasets. The top row presents the optimality gap, while the bottom row presents the corresponding test accuracy.}
\label{dir_all_2}
\end{figure*}

\begin{figure*}
\centering
\includegraphics[width=2.3in]{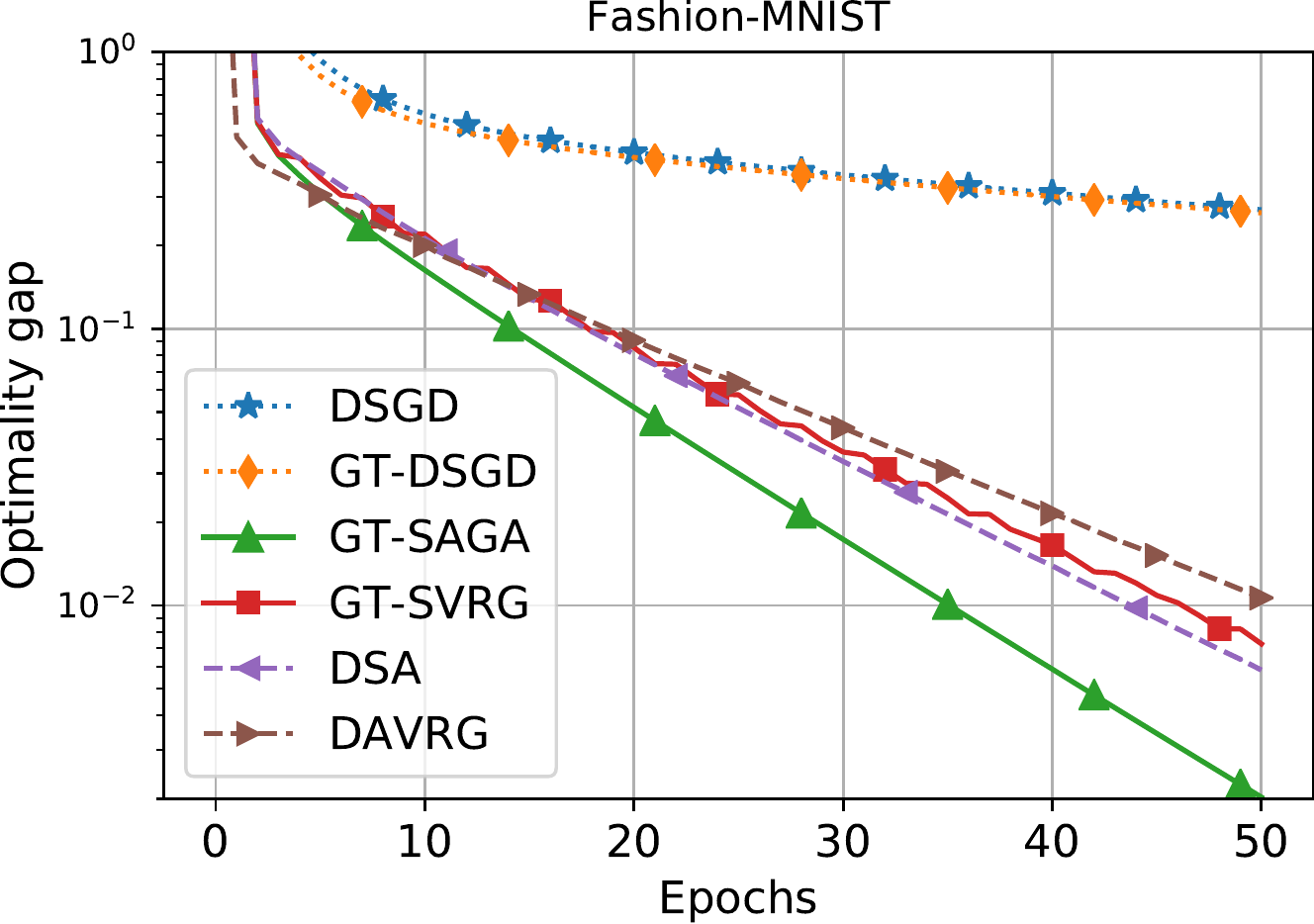}
\includegraphics[width=2.3in]{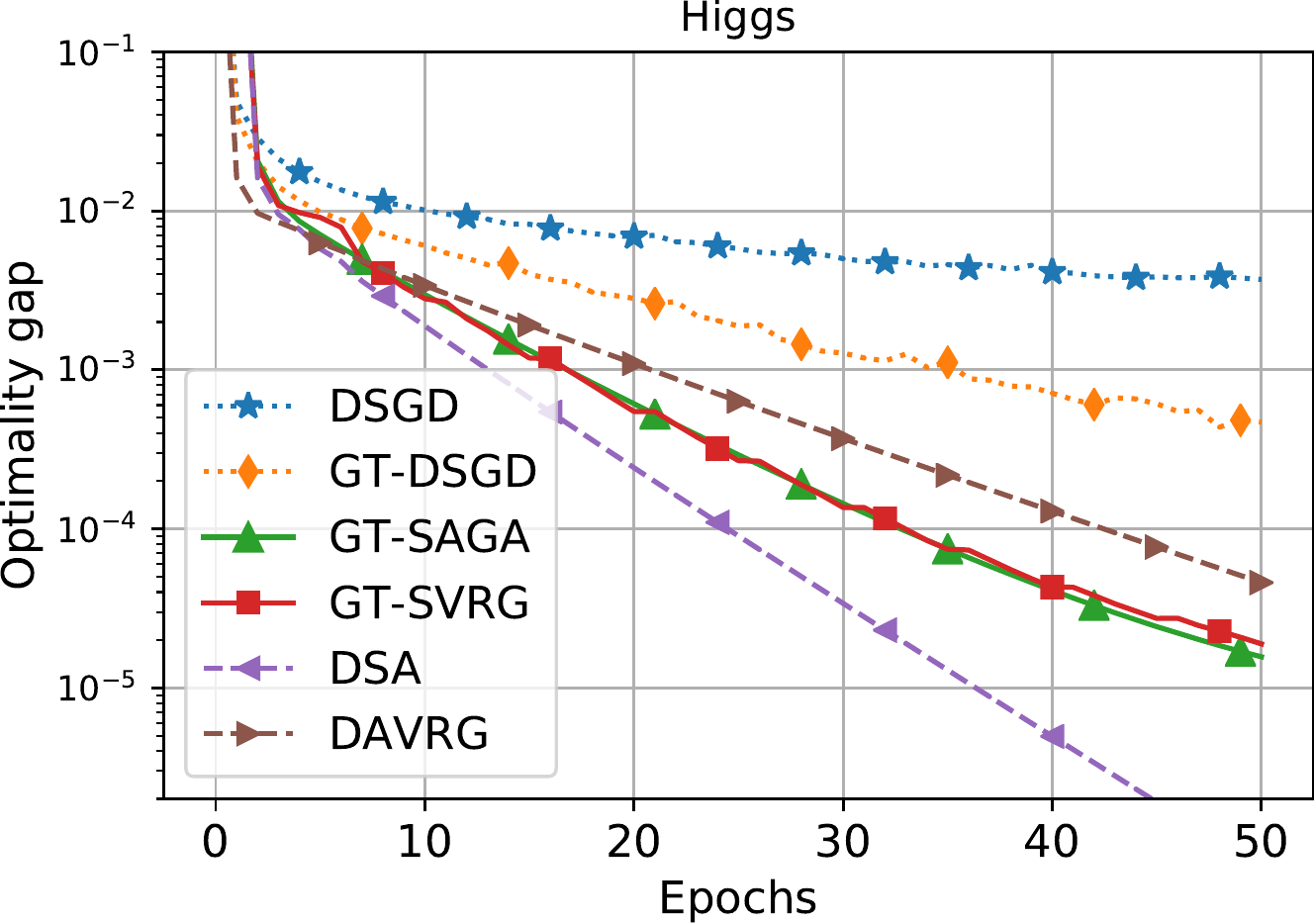}
\includegraphics[width=2.3in]{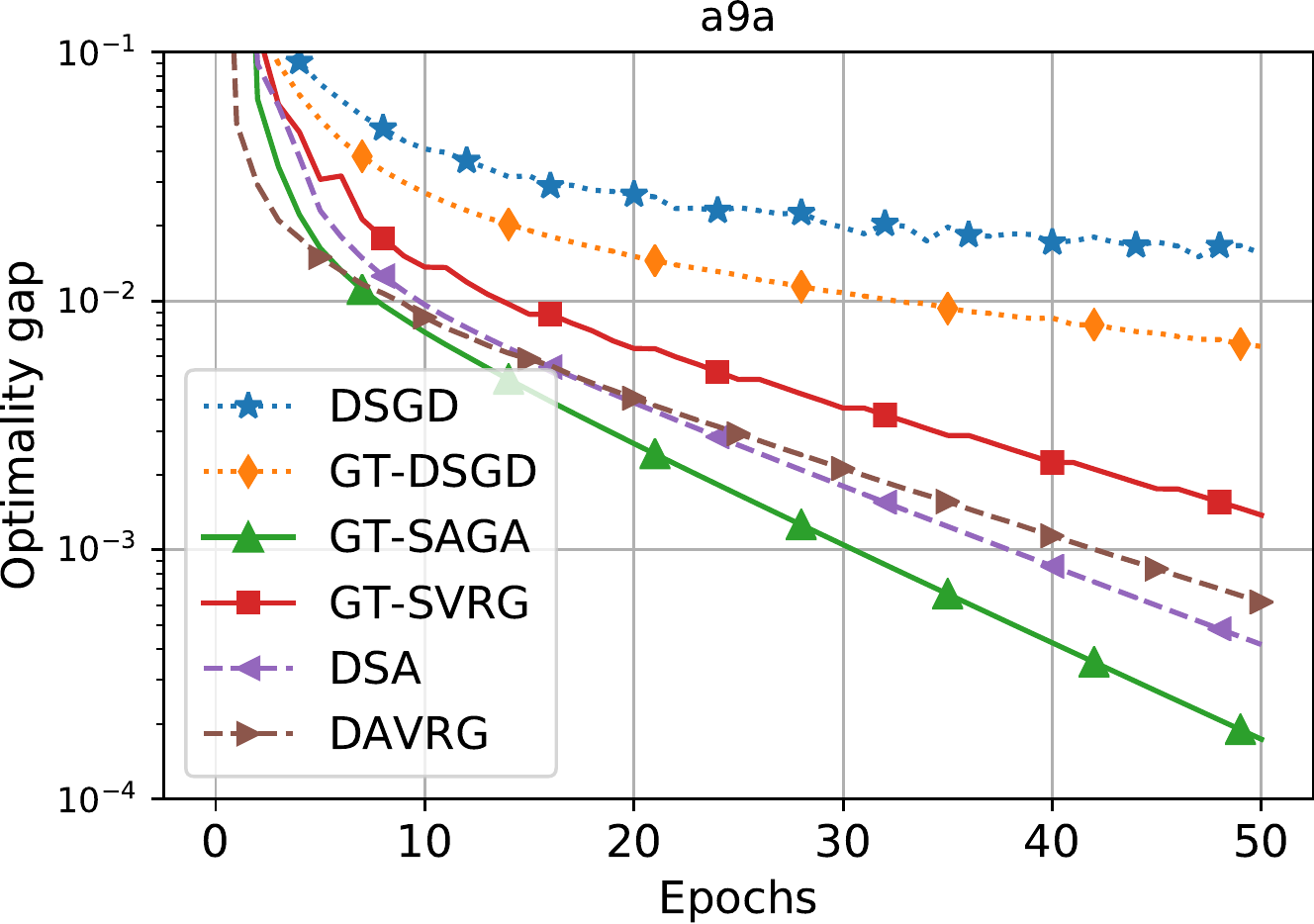}
\\
\vspace{0.2cm}
\includegraphics[width=2.3in]{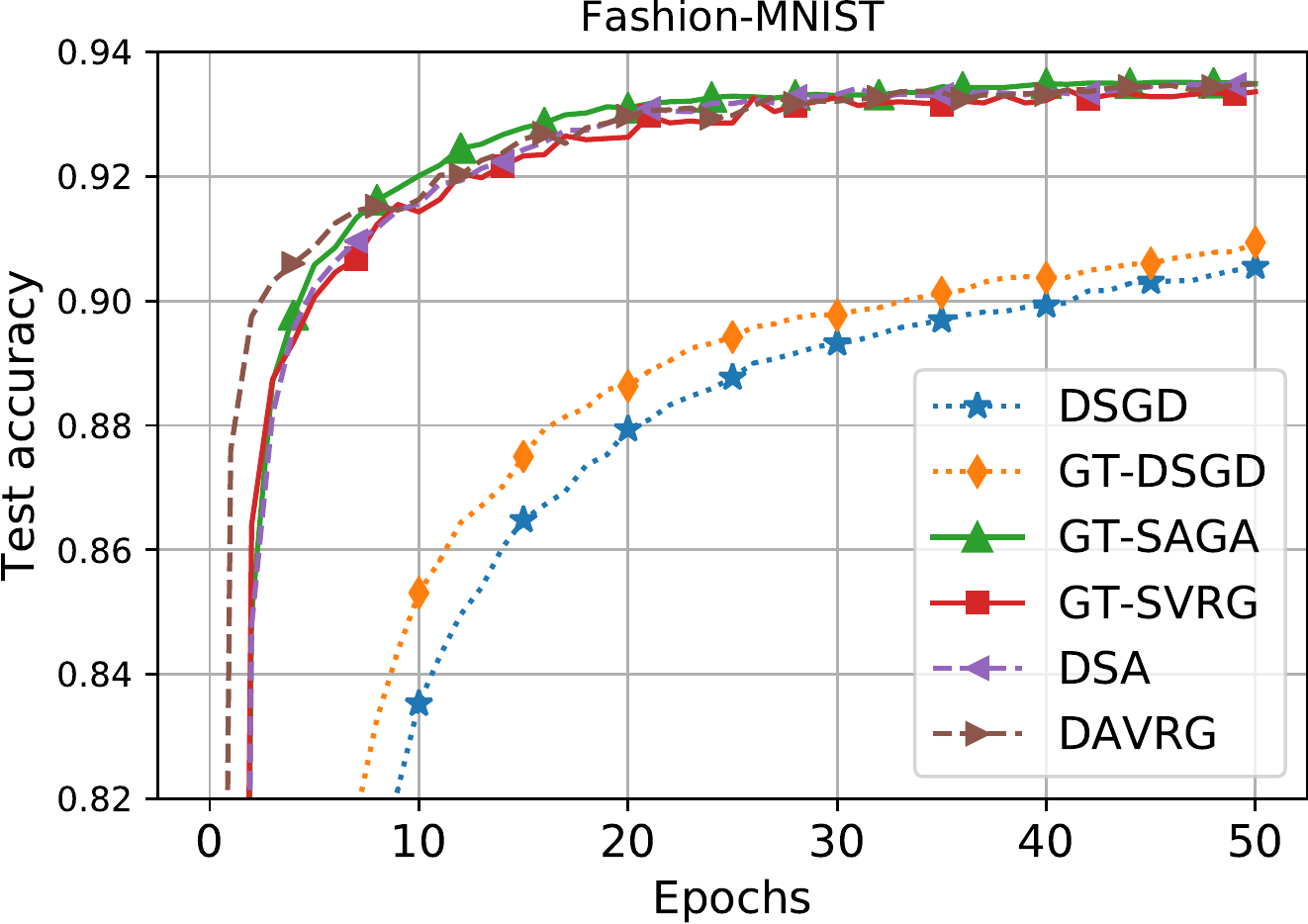}
\includegraphics[width=2.3in]{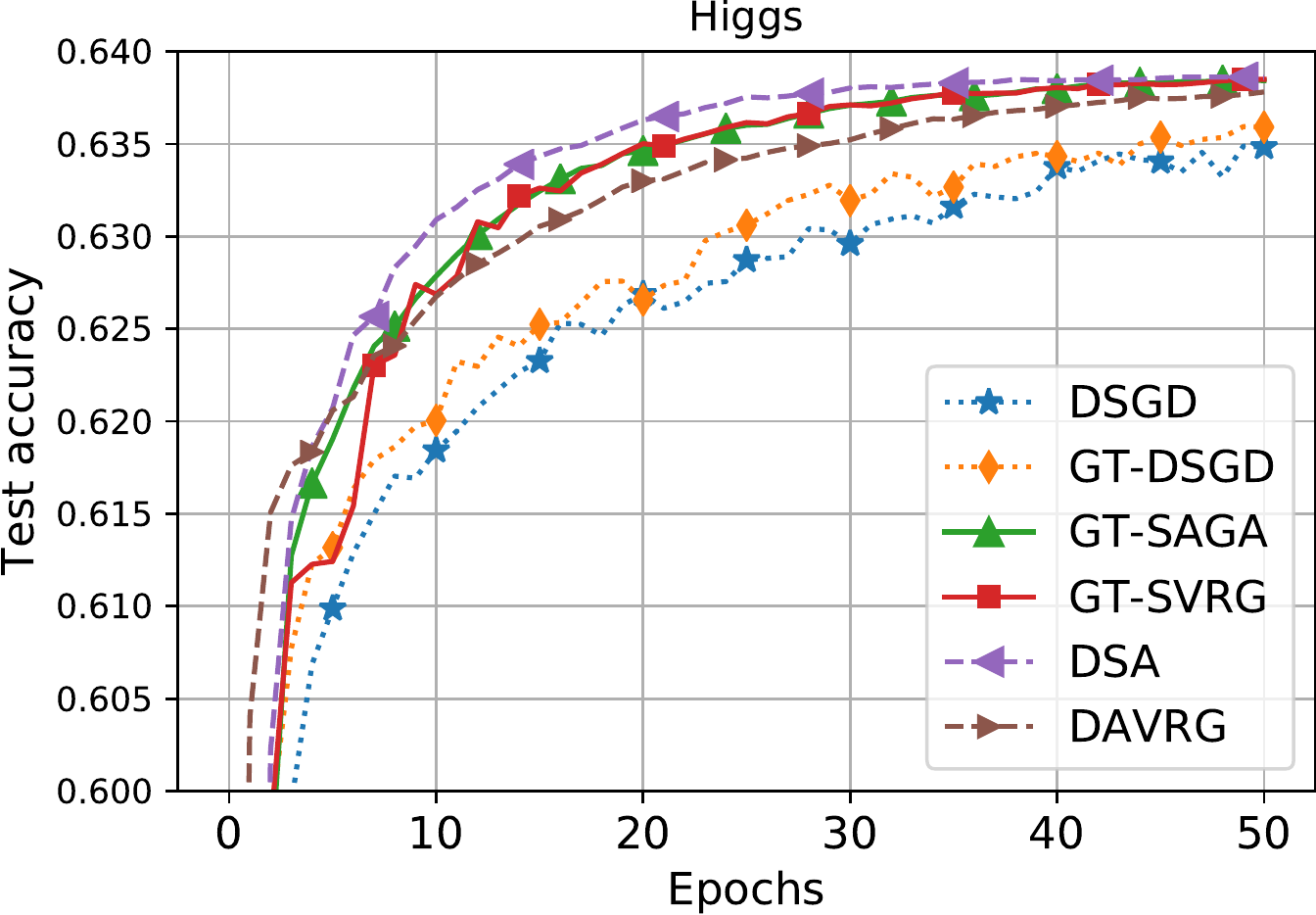}
\includegraphics[width=2.3in]{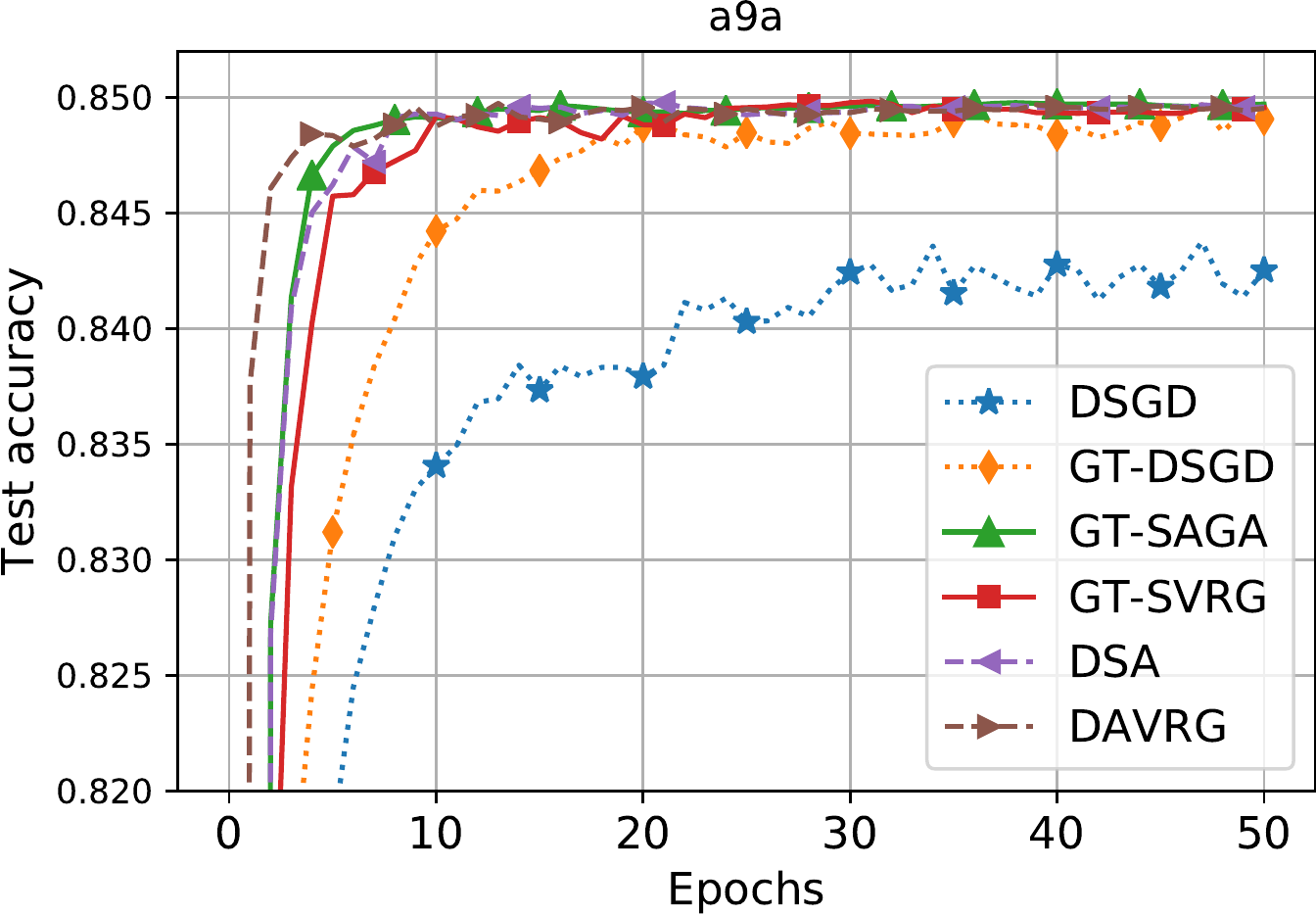}
\caption{Comparison of~\textbf{\texttt{GT-SAGA}} and~\textbf{\texttt{GT-SVRG}} with~\textbf{\texttt{DSGD}},~\textbf{\texttt{GT-DSGD}},~\textbf{\texttt{DSA}}, and~\textbf{\texttt{DAVRG}} on an undirected nearest-neighbor geometric graph with~${n=200}$ nodes over the Fashion-MNIST, Higgs, and a9a datasets. The top row shows the optimality gap, while the bottom row shows the corresponding test accuracy.}
\label{undir_all}
\end{figure*}

\subsection{Comparison with the state-of-the-art}
In this subsection, we compare the performances of the proposed \textbf{\texttt{GT-SAGA}} and \textbf{\texttt{GT-SVRG}} with the state-of-the-art decentralized stochastic first-order gradient algorithms over the datasets in Table~\ref{datasets}, i.e., DSGD, GT-DSGD, DSA, and DAVRG. We consider constant step-sizes for DSGD and GT-DSGD. Throughout this subsection, we set the regularization parameter as~$\lambda = (nm)^{-1}$ for better test accuracy~\cite{SAG,DAVRG}. 

We first consider the directed exponential graph with~${n=10}$ nodes that typically arise e.g., in data centers~\cite{SGP_ICML} where data is divided among a small number of very well-connected nodes. Note that DSA and DAVRG are not applicable to directed graphs since they require symmetric weight matrices. We thus compare the performances of \textbf{\texttt{GT-SAGA}}, \textbf{\texttt{GT-SVRG}}, DSGD and GT-DSGD, presented in Figs.~\ref{dir_all_1} and~\ref{dir_all_2}. It can be observed that the performances of DSGD and GT-DSGD are similar in this case, both of which linearly converge to a neighborhood of the optimal solution. On the other hand, \textbf{\texttt{GT-SAGA}} and \textbf{\texttt{GT-SVRG}} linearly converge to the \emph{exact} optimal solution and, moreover, achieve better test accuracy faster.

We next consider a large-scale undirected geometric graph with~${n=200}$ nodes that commonly arises e.g., in ad hoc network scenarios. The experimental result is presented in Fig.~\ref{undir_all}. We note that in this case GT-DSGD outperforms DSGD since the graph is not well-connected; this observation is consistent with~\cite{SED,SPM_XKK}. The performance of decentralized VR methods,~\textbf{\texttt{GT-SAGA}},~\textbf{\texttt{GT-SVRG}}, DSA and DAVRG are rather comparable, all of which significantly outperform DSGD and GT-DSGD in terms of both optimality gap and test accuracy. However, we note that the theoretical guarantees of DSA and DAVRG are relatively weak, compared with that of \textbf{\texttt{GT-SAGA}} and \textbf{\texttt{GT-SVRG}}.

Finally, we observe that across all experiments shown in Figs.~\ref{dir_all_1},~\ref{dir_all_2}, and~\ref{undir_all}, \textbf{\texttt{GT-SAGA}} exhibit faster convergence than \textbf{\texttt{GT-SVRG}}, at the expense of the storage cost of the gradient table at each node, demonstrating the space (storage) and time (convergence rate) tradeoffs of the \SAGA~and \SVRG~type variance reduction procedures.

\section{Convergence Analysis:\\ A General Dynamical System Approach}\label{general}
Our goal is to develop a unified analysis framework for the~\GTVR~family of algorithms. To this aim, we first present a dynamical system that unifies the~\GTVR~algorithms and develop the results that can be used in general; see~\cite{harnessing,DSGT_Pu,AB,push-pull} for similar approaches that do not involve local variance reduction schemes. Next, in Sections~\ref{sGTSAGA} and~\ref{sGTSVRG}, we specialize this dynamical system for \textbf{\texttt{GT-SAGA}} and \textbf{\texttt{GT-SVRG}} in order to formally derive the main results of Section~\ref{contribution}. 

Recall that~${\mb x_i^k\in\mbb{R}^p}$ denotes the~\GTVR~estimate of the optimal solution~$\mb x^*$ at node~$i$ and iteration~$k$, which iteratively descends in the direction of the global gradient tracker~${\mb y_i^k\in\mbb{R}^p}$. Concatenating~$\mb x_i^k$'s and~$\mb y_i^k$'s in column vectors~$\mb x^k,\mb y^k$, both in~$\mbb R^{pn}$, and defining~${W \coloneqq \ul{W}\otimes I_p}$, we can write the estimate update of~\GTVR~as
\begin{align}\label{alg_a}
\mb{x}^{k+1} &= W\mb{x}^{k} - \alpha\mb{y}^{k}, 
\end{align}
which is applicable to both~\GTSAGA~and~\GTSVRG. The gradient tracking step next is given by
\begin{align}
\mb{y}^{k+1} &= W\mb{y}^{k} +
\mb{r}^{k+1}-\mb{r}^{k}, \label{alg_b}
\end{align}
where~${\mb r^k\in\mbb R^{pn}}$ concatenates local variance-reduced gradient estimators~$\mb r_i^k$'s, all in~$\mathbb{R}^p$, which are given by~$\mb g_i^k$'s in \textbf{\texttt{GT-SAGA}} and by~$\mb v_i^k$'s in \textbf{\texttt{GT-SVRG}}. For the initial conditions, we have~${\mb{y}^0=\mb{r}^0}\in\mbb{R}^p$ and~$\mb x^0\in\mbb{R}^p$ is arbitrary. 

Clearly, \eqref{alg_a}-\eqref{alg_b} are applicable to the~\GTVR~framework in general and the specialized algorithm of interest from this family can be obtained by using the corresponding variance-reduced estimator. We therefore first analyze the dynamical system~\eqref{alg_a}-\eqref{alg_b}, on top of which the specialized results for \textbf{\texttt{GT-SAGA}} and \textbf{\texttt{GT-SVRG}} are derived subsequently. 




\vspace{-0.35cm}
\subsection{Preliminaries}
To proceed, we define several auxiliary variables that will aid the subsequent convergence analysis as follows.
\begin{align}
&\ol{\mb{x}}^{k} \coloneqq  \frac{1}{n}\left(\mb{1}_n^\top\otimes I_p\right)\mb{x}^{k},\qquad\ol{\mb{y}}^{k} \coloneqq  \frac{1}{n}\left(\mb{1}_n^\top\otimes I_p\right)\mb{y}^{k},\qquad \nonumber\\
&\ol{\mb{r}}^{k} \coloneqq  \frac{1}{n}\left(\mb{1}_n^\top\otimes I_p\right)\mb{r}^{k},
\nonumber\\
&\nabla\mb{f}(\mb{x}^{k})\coloneqq [\nabla f_1(\mb{x}_1^{k})^\top,\dots,\nabla f_n(\mb{x}_n^{k})^\top]^\top,\qquad \nonumber\\
&\ol{\nabla\mb{f}}(\mb{x}^{k})\coloneqq \frac{1}{n}\left(\mb{1}_n^\top\otimes I_p\right)\nabla\mb{f}(\mb{x}^{k}).
\nonumber 
\end{align}
We recall that~\eqref{alg_b} is a stochastic gradient tracking method~\cite{DSGT_Pu,DSGT_Xin,SGT_nonconvex_you} as an application of dynamic consensus~\cite{DAC}. It is straightforward to verify by induction that~\cite{DAC}:
$$\ol{\mb{r}}^k = \ol{\mb{y}}^k,\qquad\forall k\geq0.$$
Clearly, the randomness of both~\textbf{\texttt{GT-SAGA}} and~\textbf{\texttt{GT-SVRG}} lies in the set of independent random variables~$\{s_i^k\}_{i=\{1,\cdots,n\}}^{k\geq1}$. 
We denote~$\mc{F}^k$ as the history of the dynamical system generated by~$\{s_i^t\}^{t\leq k-1}_{i=\{1,\cdots,n\}}$. For both \textbf{\texttt{GT-SAGA}} and \textbf{\texttt{GT-SVRG}},~$\mb{r}_i^k$ is an unbiased estimator of~$\nabla f_i(\mb{x}_i^k)$ given~$\mc{F}^k$~\cite{SAGA,SVRG}, i.e.,
\begin{align*}
\mathbb{E}\left[\mb{r}^k | \mc{F}^k\right]
= \nabla\mb{f}(\mb{x}^k), \quad
\mathbb{E}\left[\ol{\mb{y}}^k | \mc{F}^k\right]
= \mathbb{E}\left[\ol{\mb{r}}^k | \mc{F}^k\right]
= \ol{\nabla\mb{f}}(\mb{x}^k).
\end{align*}  
In the following, we first present a few well-known results related to decentralized gradient tracking methods whose proofs can be found in, e.g.,~\cite{harnessing,DIGing,AB,push-pull}. 

\begin{lem}\label{descent}
Let~Assumptions~\ref{sc} and~\ref{smooth} hold. If~$0<\alpha\leq\frac{1}{L}$, we have~$\left\|\mb{x}-\alpha\nabla F(\mb{x}) -\mb{x}^*\right\|
\leq(1-\mu\alpha)\left\|\mb{x}-\mb{x}^*\right\|$,~$\forall\mb{x}\in\mathbb{R}^p$.
\end{lem}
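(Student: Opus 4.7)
The plan is to prove the standard contraction property of the gradient-descent map for an $L$-smooth, $\mu$-strongly convex function. The first step is to invoke the optimality condition $\nabla F(\mb{x}^*)=\mb{0}_p$, which allows us to rewrite
\begin{align*}
\mb{x}-\alpha\nabla F(\mb{x})-\mb{x}^* = (\mb{x}-\mb{x}^*)-\alpha\bigl(\nabla F(\mb{x})-\nabla F(\mb{x}^*)\bigr),
\end{align*}
reducing the claim to controlling the Lipschitz behavior of the map $\mb{x}\mapsto\mb{x}-\alpha\nabla F(\mb{x})$ near $\mb{x}^*$.

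The cleanest route is via a mean-value representation: write $\nabla F(\mb{x})-\nabla F(\mb{x}^*)=M_{\mb{x}}(\mb{x}-\mb{x}^*)$ with $M_{\mb{x}}:=\int_0^1\nabla^2 F(\mb{x}^*+t(\mb{x}-\mb{x}^*))\,dt$. Assumption~\ref{sc} gives $\mu I\preceq M_{\mb{x}}$, while Assumption~\ref{smooth} (which implies $F$ is $L$-smooth) gives $M_{\mb{x}}\preceq L I$. Substituting,
\begin{align*}
\mb{x}-\alpha\nabla F(\mb{x})-\mb{x}^* = (I-\alpha M_{\mb{x}})(\mb{x}-\mb{x}^*),
\end{align*}
and a direct spectral bound yields $\mn{I-\alpha M_{\mb{x}}}\leq\max\{|1-\alpha\mu|,|1-\alpha L|\}$. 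For $0<\alpha\leq 1/L$, both factors are non-negative and $1-\alpha\mu$ dominates, so $\mn{I-\alpha M_{\mb{x}}}\leq 1-\alpha\mu$. Sub-multiplicativity of the spectral norm then closes the argument.

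The only mild obstacle is that Assumption~\ref{smooth} provides only $C^1$ regularity, so the integral representation of $\nabla F$ above is formal without a further smoothing step. This is a well-known technicality: one can either approximate $F$ by $C^2$ functions and pass to the limit, or replace the spectral argument with a first-order proof combining strong convexity with the co-coercivity of $\nabla F$ (expanding $\|\mb{x}-\alpha\nabla F(\mb{x})-\mb{x}^*\|^2$ and using $\alpha\leq 1/L$ to sign the cross term). Either route recovers the contraction factor $(1-\alpha\mu)$, which is a classical textbook result in convex optimization, so I would expect the authors to invoke it rather than re-derive it.
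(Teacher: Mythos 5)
The paper does not actually prove this lemma: it is listed among ``well-known results \dots whose proofs can be found in, e.g., \cite{harnessing,DIGing,AB,push-pull}'', exactly as you anticipated, so there is no in-paper argument to compare against. Your primary route is correct: writing $\nabla F(\mb{x})-\nabla F(\mb{x}^*)=M_{\mb{x}}(\mb{x}-\mb{x}^*)$ with $\mu I\preceq M_{\mb{x}}\preceq LI$ and bounding $\mn{I-\alpha M_{\mb{x}}}\leq\max\{|1-\alpha\mu|,|1-\alpha L|\}=1-\alpha\mu$ for $0<\alpha\leq 1/L$ gives the stated contraction, and you correctly flag that Assumptions~\ref{sc}--\ref{smooth} only guarantee $C^1$ regularity, so this needs either a mollification step or a first-order substitute. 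The one imprecision is in your description of that substitute: if you expand $\|\mb{x}-\alpha\nabla F(\mb{x})-\mb{x}^*\|^2$ and use plain co-coercivity $\|\nabla F(\mb{x})-\nabla F(\mb{x}^*)\|^2\leq L\langle\nabla F(\mb{x})-\nabla F(\mb{x}^*),\mb{x}-\mb{x}^*\rangle$ merely to sign the cross term, you obtain $(1-\mu\alpha)\|\mb{x}-\mb{x}^*\|^2$ as a bound on the \emph{square}, i.e.\ a contraction factor $\sqrt{1-\mu\alpha}$, which is strictly weaker than the claimed $1-\mu\alpha$. To recover the full factor without second derivatives you need the sharper interpolation inequality $\langle\nabla F(\mb{x})-\nabla F(\mb{y}),\mb{x}-\mb{y}\rangle\geq\tfrac{\mu L}{\mu+L}\|\mb{x}-\mb{y}\|^2+\tfrac{1}{\mu+L}\|\nabla F(\mb{x})-\nabla F(\mb{y})\|^2$ (valid for $\mu$-strongly convex, $L$-smooth $F$), which for $\alpha\leq 2/(\mu+L)$, and hence in particular for $\alpha\leq 1/L$, yields exactly $(1-\mu\alpha)^2\|\mb{x}-\mb{x}^*\|^2$ after also using $\|\nabla F(\mb{x})\|\geq\mu\|\mb{x}-\mb{x}^*\|$. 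With that substitution your fallback closes, and either route establishes the lemma.
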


\begin{lem}\label{L-bound}
Let Assumption~\ref{smooth} hold. Consider the iterates $\{\mb{x}^k\}$ generated by the dynamical system~\eqref{alg_a}-\eqref{alg_b}. We have that
$\left\|\ol{\nabla\mb{f}}(\mb{x}^{k})-\nabla F(\ol{\mb{x}}^{k})\right\|\leq\frac{L}{\sqrt{n}}\left\|\mb{x}^{k}-W_\infty\mb{x}^{k}\right\|,\forall k\geq 0$.
\end{lem}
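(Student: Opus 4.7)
The plan is to expand the left-hand side block-wise, apply smoothness term by term, and then collect the terms back into the network disagreement norm via Cauchy-Schwarz. There is no real obstacle here; the lemma is essentially a bookkeeping exercise that chains three inequalities together.

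First, I would note that by Assumption~\ref{smooth} each component function $f_{i,j}$ is $L$-smooth, so each local cost $f_i = \frac{1}{m_i}\sum_j f_{i,j}$ inherits $L$-smoothness as an average of $L$-smooth functions. Next, I would observe that $\nabla F(\ol{\mb{x}}^k) = \frac{1}{n}\sum_{i=1}^n \nabla f_i(\ol{\mb{x}}^k)$ and $\ol{\nabla\mb{f}}(\mb{x}^k) = \frac{1}{n}\sum_{i=1}^n \nabla f_i(\mb{x}_i^k)$, so their difference can be written as a single average
\[
\ol{\nabla\mb{f}}(\mb{x}^k) - \nabla F(\ol{\mb{x}}^k) = \frac{1}{n}\sum_{i=1}^n\bigl[\nabla f_i(\mb{x}_i^k) - \nabla f_i(\ol{\mb{x}}^k)\bigr].
\]

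Then I would apply the triangle inequality followed by the $L$-smoothness bound on each $f_i$, yielding
\[
\bigl\|\ol{\nabla\mb{f}}(\mb{x}^k) - \nabla F(\ol{\mb{x}}^k)\bigr\| \leq \frac{1}{n}\sum_{i=1}^n \bigl\|\nabla f_i(\mb{x}_i^k) - \nabla f_i(\ol{\mb{x}}^k)\bigr\| \leq \frac{L}{n}\sum_{i=1}^n \bigl\|\mb{x}_i^k - \ol{\mb{x}}^k\bigr\|.
\]

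Finally, I would apply Cauchy-Schwarz to the sum over $i$, which gives $\sum_i \|\mb{x}_i^k - \ol{\mb{x}}^k\| \leq \sqrt{n}\,\bigl(\sum_i \|\mb{x}_i^k - \ol{\mb{x}}^k\|^2\bigr)^{1/2}$. The key structural observation to close the argument is that $W_\infty = \tfrac{1}{n}\mb{1}_n\mb{1}_n^\top \otimes I_p$, so the $i$-th block of $W_\infty \mb{x}^k$ is exactly $\ol{\mb{x}}^k$, and therefore $\|\mb{x}^k - W_\infty \mb{x}^k\|^2 = \sum_i \|\mb{x}_i^k - \ol{\mb{x}}^k\|^2$. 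Substituting this identity yields the claimed bound $\tfrac{L}{\sqrt{n}}\|\mb{x}^k - W_\infty \mb{x}^k\|$. The iterates $\{\mb{x}^k\}$ being generated by \eqref{alg_a}-\eqref{alg_b} plays no role beyond giving us well-defined objects to bound.
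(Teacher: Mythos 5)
Your proof is correct and is exactly the standard argument: the paper itself does not spell out a proof of this lemma (it defers to the cited gradient-tracking references), and the chain you give --- averaging, triangle inequality, $L$-smoothness of each $f_i$ inherited from the $f_{i,j}$, Cauchy--Schwarz, and the block identity $\left\|\mb{x}^k - W_\infty\mb{x}^k\right\|^2 = \sum_{i=1}^n\left\|\mb{x}_i^k-\ol{\mb{x}}^k\right\|^2$ --- is precisely how those references establish it. No gaps.
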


\begin{lem}\label{W_contract}
Let Assumption~\ref{connect} hold. We have that~$\forall \mb{x}\in\mathbb{R}^{np}$,
$\left\|W\mb{x} - W_\infty\mb{x}\right\|\leq\sigma\left\|\mb{x} - W_\infty\mb{x}\right\|$, where~$W_\infty=\frac{\mb{1}_n\mb{1}_n^\top}{n}\otimes I_p$.
\end{lem}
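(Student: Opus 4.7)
The plan is to reduce the inequality on $\mathbb{R}^{np}$ to a spectral-norm estimate on the Kronecker structure of $W$, exploiting the identity that $W_\infty$ acts as a projection commuting with $W$. First I would record the key algebraic facts that follow from Assumption~\ref{connect}: since $\ul W$ is doubly stochastic, $\ul W \frac{\mb 1_n \mb 1_n^\top}{n} = \frac{\mb 1_n \mb 1_n^\top}{n} \ul W = \frac{\mb 1_n \mb 1_n^\top}{n}$, and clearly $\bigl(\frac{\mb 1_n \mb 1_n^\top}{n}\bigr)^2 = \frac{\mb 1_n \mb 1_n^\top}{n}$. Taking Kronecker products with $I_p$ and using $(A\otimes I_p)(B\otimes I_p) = (AB)\otimes I_p$, this lifts to $WW_\infty = W_\infty W = W_\infty$ and $W_\infty^2 = W_\infty$.

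Next I would use these identities to remove the ``mean'' component from $\mb x$. Writing
\[
W\mb x - W_\infty \mb x \;=\; (W - W_\infty)\mb x \;=\; (W - W_\infty)(\mb x - W_\infty \mb x) + (W - W_\infty)W_\infty \mb x,
\]
the second term vanishes since $(W - W_\infty)W_\infty = WW_\infty - W_\infty^2 = W_\infty - W_\infty = 0$. Hence $W\mb x - W_\infty \mb x = (W - W_\infty)(\mb x - W_\infty \mb x)$, and passing to norms yields $\|W\mb x - W_\infty\mb x\| \le \mn{W - W_\infty}\,\|\mb x - W_\infty \mb x\|$.

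It remains to identify $\mn{W - W_\infty}$ with $\sigma$. Since $W - W_\infty = \bigl(\ul W - \tfrac{1}{n}\mb 1_n \mb 1_n^\top\bigr)\otimes I_p$, and the spectral norm of a Kronecker product factors as $\mn{A\otimes B} = \mn{A}\,\mn{B}$, one obtains $\mn{W - W_\infty} = \mn{\ul W - \tfrac{1}{n}\mb 1_n \mb 1_n^\top}\cdot \mn{I_p} = \sigma$, where the equality $\sigma = \mn{\ul W - \tfrac{1}{n}\mb 1_n \mb 1_n^\top}$ is precisely the consequence of Assumption~\ref{connect} recorded earlier in the paper. Combining the two estimates completes the proof.

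There is no real obstacle here; the only subtle point is checking the commutation $WW_\infty = W_\infty W = W_\infty$ and the Kronecker norm identity, both of which are immediate from the double stochasticity of $\ul W$ and standard matrix algebra.
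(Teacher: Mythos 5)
Your proof is correct and is essentially the standard argument for this lemma (the paper itself defers the proof to the cited references on gradient tracking, where this same decomposition appears): the identities $WW_\infty = W_\infty W = W_\infty^2 = W_\infty$ from double stochasticity give $(W - W_\infty)W_\infty = 0$, hence $W\mb x - W_\infty\mb x = (W-W_\infty)(\mb x - W_\infty\mb x)$, and the Kronecker norm identity identifies $\mn{W - W_\infty}$ with $\sigma = \mn{\ul W - \tfrac{1}{n}\mb 1_n\mb 1_n^\top}$ as recorded after Assumption~\ref{connect}. No gaps.
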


\vspace{-0.35cm}
\subsection{Auxiliary Results}
\vspace{-0.15cm}
In this subsection, we analyze the general dynamical system~\eqref{alg_a}-\eqref{alg_b} by establishing the interrelationships between the mean-squared consensus error~$\mathbb{E}\left[\|\mb{x}^{k}-W_\infty\mb{x}^{k}\|^2\right]$,~network optimality gap $\mathbb{E}\left[\|\ol{\mb{x}}^{k}-\mb{x}^*\|^2\right]$ and gradient tracking error~$\mathbb{E}\left[\|\mb{y}^{k}-W_\infty\mb{y}^{k}\|^2\right]$.
\begin{lem}\label{consensus}
Let Assumption~\ref{connect} hold. Consider the iterates $\{\mb{x}^k\}$ generated by~\eqref{alg_a}-\eqref{alg_b}. We have the following hold:~$\forall k\geq0$,
{\small\begin{align}
\mathbb{E}\left[\left\|\mb{x}^{k+1}-W_\infty\mb{x}^{k+1}\right\|^2\right]\leq&~
	\frac{1+\sigma^2}{2}\mathbb{E}\left[\left\|\mb{x}^k-W_\infty\mb{x}^k\right\|^2\right] \nonumber\\
	&+ \frac{2\alpha^2}{1-\sigma^2}\mathbb{E}\left[\left\|\mb{y}^k-W_\infty\mb{y}^k\right\|^2\right]. \label{consensus1}\\
\mathbb{E}\left[\left\|\mb{x}^{k+1}-W_\infty\mb{x}^{k+1}\right\|^2\right]\leq&~
	2\mathbb{E}\left[\left\|\mb{x}^k-W_\infty\mb{x}^k\right\|^2\right] \nonumber\\
	&+ 2\alpha^2\mathbb{E}\left[\left\|\mb{y}^k-W_\infty\mb{y}^k\right\|^2\right] \label{consensus2}.
\end{align}}
\end{lem}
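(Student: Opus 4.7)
The plan is to directly manipulate the update~\eqref{alg_a} to express $\mb{x}^{k+1} - W_\infty \mb{x}^{k+1}$ in a form to which Lemma~\ref{W_contract} applies, and then split the resulting sum of two terms via a Young-type inequality (tuned differently for the two inequalities).

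First, I would substitute~\eqref{alg_a} into $\mb{x}^{k+1} - W_\infty \mb{x}^{k+1}$ and use the identity $W_\infty W = W_\infty$ (which follows from double stochasticity of $\ul{W}$, since $\ul{W}\mb{1}_n = \mb{1}_n$). This gives
\begin{equation*}
\mb{x}^{k+1} - W_\infty \mb{x}^{k+1} = (W - W_\infty)\mb{x}^k - \alpha\bigl(\mb{y}^k - W_\infty \mb{y}^k\bigr).
\end{equation*}
Applying Lemma~\ref{W_contract} to the first term yields $\|(W - W_\infty)\mb{x}^k\| \leq \sigma\|\mb{x}^k - W_\infty \mb{x}^k\|$.

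For the first inequality~\eqref{consensus1}, I would invoke Young's inequality $\|a - b\|^2 \leq (1+\eta)\|a\|^2 + (1+1/\eta)\|b\|^2$ with the choice $\eta = \frac{1-\sigma^2}{2\sigma^2}$, so that $(1+\eta)\sigma^2 = \frac{1+\sigma^2}{2}$. A direct computation then gives $1 + 1/\eta = \frac{1+\sigma^2}{1-\sigma^2} \leq \frac{2}{1-\sigma^2}$, producing exactly the coefficients in~\eqref{consensus1} after taking conditional expectations and then total expectations. For the second inequality~\eqref{consensus2}, I would instead use the cruder bound $\|a - b\|^2 \leq 2\|a\|^2 + 2\|b\|^2$ together with $\sigma^2 \leq 1$, which immediately yields the factor $2$ in front of both terms on the right-hand side.

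There is no real obstacle here; the argument is essentially a one-step expansion followed by norm-squared estimates. The only subtle point worth verifying carefully is the identity $W_\infty W = W_\infty$ (so that the $W$-contraction applies cleanly to $\mb{x}^k - W_\infty \mb{x}^k$ rather than to $\mb{x}^k$ itself), but this is a standard consequence of Assumption~\ref{connect}. The proof therefore amounts to assembling these ingredients and taking expectations at the end; no randomness from $\mb{r}^k$ or $\mc{F}^k$ enters because the update for $\mb{x}^{k+1}$ depends only on $\mb{x}^k$ and $\mb{y}^k$, and the bounds are pathwise before expectation.
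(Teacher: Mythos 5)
Your proposal is correct and follows essentially the same route as the paper's proof: substitute~\eqref{alg_a}, use $W_\infty W = W_\infty$ to write the deviation as $(W-W_\infty)\mb{x}^k - \alpha(\mb{y}^k - W_\infty\mb{y}^k)$, apply Lemma~\ref{W_contract}, and then apply Young's inequality with $\eta = \frac{1-\sigma^2}{2\sigma^2}$ for~\eqref{consensus1} and $\eta=1$ for~\eqref{consensus2}. Your observation that the bound is pathwise and that no conditioning on $\mc{F}^k$ is needed is also consistent with how the paper concludes by simply taking total expectations.
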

\begin{proof}
Using~\eqref{alg_a} and the fact that~$W_\infty W = W_\infty$, we have:
\begin{align}
&\left\|\mb{x}^{k+1}-W_\infty\mb{x}^{k+1}\right\|^2 \nonumber\\
=&~ \left\|W\mb{x}^k - W_\infty\mb{x}^k-\alpha\left(\mb{y}^k-W_\infty\mb{y}^k\right)\right\|^2
\label{consensus0}
\end{align}
Next, we use Young's inequality that~$\|\mb{a}+\mb{b}\|^2\leq(1+\eta)\|\mb{a}\|^2+(1+\frac{1}{\eta})\|\mb{b}\|^2,\forall\mb{a},\mb{b}\in\mathbb{R}^{np},\forall\eta>0$, and Lemma~\ref{W_contract} in~\eqref{consensus0} to obtain:~$\forall k\geq0$,
\begin{align}
\left\|\mb{x}^{k+1}-W_\infty\mb{x}^{k+1}\right\|^2
\leq&~\left(1+\eta\right)\sigma^2\left\|\mb{x}^{k}-W_\infty\mb{x}^{k}\right\|^2\nonumber\\
&~+ \left(1+\eta^{-1}\right)\alpha^2\left\|\mb{y}^k-W_\infty\mb{y}^k\right\|^2\nonumber
\end{align}
Setting~$\eta$ as $\frac{1-\sigma^2}{2\sigma^2}$ and~$1$ in the above inequality respectively leads to~\eqref{consensus1} and~\eqref{consensus2}. 
\end{proof}
Next, we establish an inequality for~$\mathbb{E}\left[\left\|\ol{\mb x}^{k+1}-\mb{x}^*\right\|^2\right]$. 
\begin{lem}
Let Assumptions~\ref{sc},~\ref{smooth} and~\ref{connect} hold. Consider the iterates $\{\mb{x}^k\}$ generated by~\eqref{alg_a}-\eqref{alg_b}. If~$0<\alpha\leq\frac{1}{L}$, we have the following inequalities hold:~$\forall k\geq0$,
\begin{align}
\mathbb{E}\left[n\left\|\ol{\mb x}^{k+1}-\mb{x}^*\right\|^2\right] 
\leq&~ \frac{L^2\alpha}{\mu}\mathbb{E}\left[\left\|\mb{x}^k-W_\infty\mb{x}^k\right\|^2\right] \nonumber\\
&+ (1-\mu\alpha)\mathbb{E}\left[n\|\ol{\mb{x}}^k-\mb{x}^*\|^2\right]  \nonumber\\
&+ \frac{\alpha^2}{n}\mathbb{E}\left[\left\|\mb{r}^k-\nabla\mb{f}(\mb{x}^k)\right\|^2\right].
\label{opt_1} \\
\mathbb{E}\left[n\left\|\ol{\mb x}^{k+1}-\mb{x}^*\right\|^2\right] 
\leq&~ 2L^2\alpha^2\mathbb{E}\left[\left\|\mb{x}^k-W_\infty\mb{x}^k\right\|^2\right] \nonumber\\
&+ 2\mathbb{E}\left[n\|\ol{\mb{x}}^k-\mb{x}^*\|^2\right]  \nonumber\\
&+ \frac{\alpha^2}{n}\mathbb{E}\left[\left\|\mb{r}^k-\nabla\mb{f}(\mb{x}^k)\right\|^2\right].
\label{opt_2}
\end{align}
\end{lem}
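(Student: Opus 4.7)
The plan is to start from the network average of the update rule~\eqref{alg_a}. Since $W$ is doubly stochastic (Assumption~\ref{connect}) and $\overline{\mathbf{y}}^k = \overline{\mathbf{r}}^k$, averaging gives $\overline{\mathbf{x}}^{k+1} = \overline{\mathbf{x}}^k - \alpha \overline{\mathbf{r}}^k$. I would then decompose the one-step error as
\begin{align*}
\overline{\mathbf{x}}^{k+1} - \mathbf{x}^* = \underbrace{\bigl(\overline{\mathbf{x}}^k - \alpha\,\overline{\nabla\mathbf{f}}(\mathbf{x}^k) - \mathbf{x}^*\bigr)}_{=:\,\mathbf{a}^k} - \alpha\,\underbrace{\bigl(\overline{\mathbf{r}}^k - \overline{\nabla\mathbf{f}}(\mathbf{x}^k)\bigr)}_{=:\,\mathbf{b}^k},
\end{align*}
where $\mathbf{a}^k$ is $\mathcal{F}^k$-measurable and $\mathbf{b}^k$ has conditional mean zero, so $\mathbb{E}[\|\overline{\mathbf{x}}^{k+1} - \mathbf{x}^*\|^2 \mid \mathcal{F}^k] = \|\mathbf{a}^k\|^2 + \alpha^2\,\mathbb{E}[\|\mathbf{b}^k\|^2 \mid \mathcal{F}^k]$.

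Next I would bound the variance term using conditional independence of the samples $\{s_i^k\}_i$ across nodes: since each summand $\mathbf{r}_i^k - \nabla f_i(\mathbf{x}_i^k)$ is zero-mean and they are mutually independent given $\mathcal{F}^k$, the cross terms vanish and
\begin{align*}
\mathbb{E}[\|\mathbf{b}^k\|^2 \mid \mathcal{F}^k] = \tfrac{1}{n^2}\textstyle\sum_{i=1}^n \mathbb{E}[\|\mathbf{r}_i^k-\nabla f_i(\mathbf{x}_i^k)\|^2 \mid \mathcal{F}^k] = \tfrac{1}{n^2}\,\mathbb{E}[\|\mathbf{r}^k - \nabla\mathbf{f}(\mathbf{x}^k)\|^2 \mid \mathcal{F}^k].
\end{align*}
After multiplying by $n$ and taking total expectation, this produces the $\frac{\alpha^2}{n}\mathbb{E}[\|\mathbf{r}^k-\nabla\mathbf{f}(\mathbf{x}^k)\|^2]$ term in both target inequalities.

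For the deterministic part $\|\mathbf{a}^k\|^2$, I would add and subtract $\alpha\nabla F(\overline{\mathbf{x}}^k)$ and apply the triangle inequality to get $\|\mathbf{a}^k\| \leq \|\overline{\mathbf{x}}^k - \alpha\nabla F(\overline{\mathbf{x}}^k) - \mathbf{x}^*\| + \alpha\,\|\overline{\nabla\mathbf{f}}(\mathbf{x}^k) - \nabla F(\overline{\mathbf{x}}^k)\|$. Lemma~\ref{descent} (valid because $\alpha \leq 1/L$) bounds the first piece by $(1-\mu\alpha)\|\overline{\mathbf{x}}^k - \mathbf{x}^*\|$, while Lemma~\ref{L-bound} bounds the second by $\frac{L}{\sqrt{n}}\|\mathbf{x}^k - W_\infty\mathbf{x}^k\|$. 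Squaring via Young's inequality $\|u+v\|^2 \leq (1+\eta)\|u\|^2 + (1+\eta^{-1})\|v\|^2$ yields
\begin{align*}
\|\mathbf{a}^k\|^2 \leq (1+\eta)(1-\mu\alpha)^2\|\overline{\mathbf{x}}^k-\mathbf{x}^*\|^2 + (1+\eta^{-1})\,\tfrac{\alpha^2 L^2}{n}\|\mathbf{x}^k-W_\infty\mathbf{x}^k\|^2.
\end{align*}

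The only delicate step is the choice of $\eta$. For~\eqref{opt_1} I would set $\eta = \frac{\mu\alpha}{1-\mu\alpha}$, which makes $(1+\eta)(1-\mu\alpha)^2 = (1-\mu\alpha)$ and $(1+\eta^{-1}) = \frac{1}{\mu\alpha}$, so after multiplying by $n$ the consensus-error coefficient becomes exactly $\frac{L^2\alpha}{\mu}$. For~\eqref{opt_2} I would simply take $\eta = 1$, giving the constants $2$ and $2L^2\alpha^2$. Combining the deterministic and variance bounds and taking total expectation yields the two stated inequalities. The main obstacle is just correctly tuning $\eta$ to land the sharp $(1-\mu\alpha)$ contraction factor rather than a looser $(1-\mu\alpha)^2$ style factor; once that is done, the remainder is bookkeeping.
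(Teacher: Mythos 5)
Your proposal is correct and follows essentially the same route as the paper: both isolate the zero-mean noise $\ol{\mb r}^k-\ol{\nabla\mb f}(\mb x^k)$ so the cross term vanishes, bound its conditional second moment via independence across nodes, and handle the deterministic part with Lemma~\ref{descent}, Lemma~\ref{L-bound}, and a Young-type inequality tuned to recover the sharp $(1-\mu\alpha)$ factor. Your packaging (triangle inequality on $\mb a^k$ followed by Young with $\eta=\tfrac{\mu\alpha}{1-\mu\alpha}$) is algebraically equivalent to the paper's direct expansion of the square and yields the identical constants.
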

\begin{proof}
Multiplying~$\frac{\mb{1}_n^\top\otimes I_p}{n}$ to~\eqref{alg_a}, we have that~$\forall k\geq0$, $$\ol{\mb x}^{k+1}=\ol{\mb x}^{k}-\alpha\ol{\mb y}^{k}=\ol{\mb x}^{k}-\alpha\ol{\mb r}^{k}.$$ We expand~$\mathbb{E}\left[\|\ol{\mb{x}}^{k+1}-\mb{x}^*\|^2|\mc{F}^k\right]$ as follows. 
\begin{align}
&\mathbb{E}\left[\left\|\ol{\mb x}^{k+1}-\mb{x}^*\right\|^2|\mc{F}^k\right] \nonumber\\
=&~\mathbb{E}\left[\left\|\ol{\mb x}^{k}-\alpha\ol{\mb r}^{k}-\mb{x}^*\right\|^2|\mc{F}^k\right]\nonumber\\
=&~\mathbb{E}\left[\left\|\ol{\mb x}^{k}-\alpha\nabla F(\ol{\mb x}^{k})-\mb{x}^*+\alpha\left(\nabla F(\ol{\mb x}^{k})-\ol{\mb r}^{k}\right)\right\|^2|\mc{F}^k\right] \nonumber\\
=& \left\|\ol{\mb x}^{k}-\alpha\nabla F(\ol{\mb x}^{k})-\mb{x}^*\right\|^2 + \alpha^2\mathbb{E}\left[\left\|\nabla F(\ol{\mb x}^{k})-\ol{\mb r}^{k}|\right\|^2\mc{F}^k\right] \nonumber\\
&+ 2\alpha\Big\langle \ol{\mb x}^{k}-\alpha\nabla F(\ol{\mb x}^{k})-\mb{x}^*, \nabla F(\ol{\mb x}^{k})-\ol{\nabla\mb{f}}(\mb{x}^k)\Big\rangle, \label{opt0}
\end{align}
where in the last equality we used that~$\mathbb{E}\left[\ol{\mb{r}}^k|\mc{F}^k\right]=\ol{\nabla\mb{f}}(\mb{x}^k)$. Next, we expand and simplify~$\mathbb{E}\left[\left\|\nabla F(\ol{\mb x}^{k})-\ol{\mb r}^{k}\right\|^2|\mc{F}^k\right]$:
{\begin{align}
&\mathbb{E}\left[\left\|\nabla F(\ol{\mb x}^{k})-\ol{\mb r}^{k}\right\|^2|\mc{F}^k\right] 
\nonumber\\
=&~\left\|\nabla F(\ol{\mb x}^{k})-\ol{\nabla\mb{f}}(\mb{x}^k)\right\|^2
+ \mathbb{E}\left[\left\|\ol{\nabla\mb{f}}(\mb{x}^k)-\ol{\mb r}^{k}\right\|^2|\mc{F}^k\right] 
\label{var+consensus0}
\end{align}}where we used the fact that $$\Big\langle\nabla F(\ol{\mb x}^{k})-\ol{\nabla\mb{f}}(\mb{x}^k),\mathbb{E}\left[\ol{\nabla\mb{f}}(\mb{x}^k)-\ol{\mb r}^{k}|\mc{F}^k\right]\Big\rangle=0.$$ For the last term in~\eqref{var+consensus0}, we have that:
\begin{align}
&\mathbb{E}\left[\left\|\ol{\nabla\mb{f}}(\mb{x}^k)-\ol{\mb r}^{k}\right\|^2|\mc{F}^k\right] \nonumber\\
=&~\frac{1}{n^2}\mathbb{E}\left[\left\|\sum_{i=1}^{n}\left(\mb{r}_i^k-\nabla f_i(\mb{x}_i^k)\right)\right\|^2\Big|\mc{F}^k\right] \nonumber\\
=&~\frac{1}{n^2}\mathbb{E}\left[\left\|\mb{r}^k-\nabla\mb{f}(\mb{x}^k)\right\|^2|\mc{F}^k\right], \label{var+consensus1}
\end{align} 
where in the equality above we used the fact that~$\{\mb{r}_i^k\}_{i=1}^n$ are independent from each other and from~$\mc{F}^k$ and therefore~$\mathbb{E}\left[\sum_{i\neq j}\big\langle \mb{r}_i^k-\nabla f_i(\mb{x}_i^k), \mb{r}_j^k-\nabla f_j(\mb{x}_j^k)\big\rangle|\mc{F}^k\right]=0$. Now, we use~\eqref{var+consensus0},~\eqref{var+consensus1} and Lemma~\ref{descent} in~\eqref{opt0} to obtain:
\begin{align}
&\mathbb{E}\left[\left\|\ol{\mb x}^{k+1}-\mb{x}^*\right\|^2|\mc{F}^k\right]\nonumber\\
\leq&~ (1-\mu\alpha)^2\|\ol{\mb{x}}^k-\mb{x}^*\|^2  
+ \alpha^2\left\|\nabla F(\ol{\mb x}^{k})-\ol{\nabla\mb{f}}(\mb{x}^k)\right\|^2\nonumber\\
&+ 2\alpha(1-\mu\alpha)\left\|\ol{\mb x}^{k}-\mb{x}^*\right\|\left\|\nabla F(\ol{\mb x}^{k})-\ol{\nabla\mb{f}}(\mb{x}^k)\right\| \nonumber\\
&+ \frac{\alpha^2}{n^2}\mathbb{E}\left[\left\|\mb{r}^k-\nabla\mb{f}(\mb{x}^k)\right\|^2|\mc{F}^k\right].
\label{opt01}
\end{align}
Finally, we apply Young's inequality such that
{\begin{align}
&2\alpha\left\|\ol{\mb x}^{k}-\mb{x}^*\right\|\left\|\nabla F(\ol{\mb x}^{k})-\ol{\nabla\mb{f}}(\mb{x}^k)\right\| \nonumber\\
\leq&~\mu\alpha\left\|\ol{\mb x}^{k}-\mb{x}^*\right\|^2 + \mu^{-1}\alpha\left\|\nabla F(\ol{\mb x}^{k})-\ol{\nabla\mb{f}}(\mb{x}^k)\right\|^2 \nonumber
\end{align}}and~$\left\|\ol{\nabla\mb{f}}(\mb{x}^{k})-\nabla F(\ol{\mb{x}}^{k})\right\|\leq\frac{L}{\sqrt{n}}\left\|\mb{x}^{k}-W_\infty\mb{x}^{k}\right\|,\forall k\geq0$, from Lemma~\ref{L-bound} to~\eqref{opt01} and take the total expectation; the resulting inequality is exactly~\eqref{opt_1}. Similarly, using
{\begin{align}
&2\alpha\left\|\ol{\mb x}^{k}-\mb{x}^*\right\|\left\|\nabla F(\ol{\mb x}^{k})-\ol{\nabla\mb{f}}(\mb{x}^k)\right\| \nonumber\\
\leq& \left\|\ol{\mb x}^{k}-\mb{x}^*\right\|^2 + \alpha^2\left\|\nabla F(\ol{\mb x}^{k})-\ol{\nabla\mb{f}}(\mb{x}^k)\right\| \nonumber
\end{align}}
and Lemma~\ref{L-bound} in~\eqref{opt01} leads to~\eqref{opt_2}.
\end{proof}
Next, we derive an inequality for~$\mathbb{E}\left[\|\mb{y}^{k+1}-W_\infty\mb{y}^{k+1}\|^2\right]$.
\begin{lem}\label{general_3}
Let Assumption~\ref{smooth} and Assumption~\ref{connect} hold. Consider the iterates~$\{\mb{y}^k\}$ generated by~\eqref{alg_a}-\eqref{alg_b}. If~$0<\alpha\leq\frac{1}{4\sqrt{2}L}$, we have the following inequality hold:~$\forall k\geq0$,
{\begin{align}
&\mathbb{E}\left[\left\|\mb{y}^{k+1}-W_\infty\mb{y}^{k+1}\right\|^2\right]\nonumber\\
\leq&~\frac{33L^2}{1-\sigma^2}\mathbb{E}\left[\left\|\mb{x}^k-W_\infty\mb{x}^k\right\|^2\right] + \frac{L^2}{1-\sigma^2}\mathbb{E}\left[n\left\|\ol{\mb{x}}^k-\mb{x}^*\right\|^2\right] 
\nonumber\\
&+ \left(\frac{1+\sigma^2}{2}+\frac{32L^2\alpha^2}{1-\sigma^2}\right)\mathbb{E}\left[\left\|\mb{y}^k-W_\infty\mb{y}^k\right\|^2\right] \nonumber\\
&+ \frac{5}{1-\sigma^2}\mathbb{E}\left[\left\|\mb{r}^{k}-\nabla\mb{f}(\mb{x}^{k})\right\|^2\right]\nonumber\\
&+ \frac{4}{1-\sigma^2}\mathbb{E}\left[\left\|\mb{r}^{k+1}-\nabla\mb{f}(\mb{x}^{k+1})\right\|^2\right]
\nonumber.
\end{align}}
\end{lem}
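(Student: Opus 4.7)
The plan is to derive a one-step recursion for $\mathbb{E}\|\mb{y}^{k+1}-W_\infty\mb{y}^{k+1}\|^2$ by combining a Young-weighted contraction of $W$ with a careful decomposition of the ``driving'' term $\|\mb{r}^{k+1}-\mb{r}^k\|^2$. First, since $\ol{\mb{r}}^k=\ol{\mb{y}}^k$ implies $W_\infty\mb{r}^k=W_\infty\mb{y}^k$, and $WW_\infty=W_\infty$, I can rewrite \eqref{alg_b} as
$$\mb{y}^{k+1}-W_\infty\mb{y}^{k+1}=W(\mb{y}^k-W_\infty\mb{y}^k)+(I-W_\infty)(\mb{r}^{k+1}-\mb{r}^k).$$
Applying the weighted Young's inequality with parameter $\eta=(1-\sigma^2)/(2\sigma^2)$, the contraction in Lemma~\ref{W_contract}, and $\|I-W_\infty\|\le 1$ yields
$$\mathbb{E}\|\mb{y}^{k+1}-W_\infty\mb{y}^{k+1}\|^2\le \tfrac{1+\sigma^2}{2}\mathbb{E}\|\mb{y}^k-W_\infty\mb{y}^k\|^2+\tfrac{2}{1-\sigma^2}\mathbb{E}\|\mb{r}^{k+1}-\mb{r}^k\|^2.$$

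Next, I would split $\mb{r}^{k+1}-\mb{r}^k=A-B+C$ with $A\coloneqq\mb{r}^{k+1}-\nabla\mb{f}(\mb{x}^{k+1})$, $B\coloneqq\mb{r}^k-\nabla\mb{f}(\mb{x}^k)$, and $C\coloneqq\nabla\mb{f}(\mb{x}^{k+1})-\nabla\mb{f}(\mb{x}^k)$. The conditional unbiasedness $\mathbb{E}[A\mid\mc{F}^{k+1}]=\bfzero$, together with the fact that $B,C$ are $\mc{F}^{k+1}$-measurable, gives the orthogonal decomposition $\mathbb{E}\|\mb{r}^{k+1}-\mb{r}^k\|^2=\mathbb{E}\|A\|^2+\mathbb{E}\|C-B\|^2$. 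A Young's on $\|C-B\|^2$ with the parameter tuned so that $\|B\|^2$ carries weight $5/2$ then yields the desired $\frac{5}{1-\sigma^2}$ and $\frac{4}{1-\sigma^2}$ variance coefficients (the $\|A\|^2$ coefficient may be over-estimated from $\frac{2}{1-\sigma^2}$ to $\frac{4}{1-\sigma^2}$); and $\|C\|^2$ is bounded by $L^2\|\mb{x}^{k+1}-\mb{x}^k\|^2$ using Assumption~\ref{smooth}.

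For the per-step displacement, I would use $(I-W)W_\infty=0$ and the update \eqref{alg_a} to write $\mb{x}^{k+1}-\mb{x}^k=-(I-W)(\mb{x}^k-W_\infty\mb{x}^k)-\alpha\mb{y}^k$, producing $\|\mb{x}^{k+1}-\mb{x}^k\|^2\le 8\|\mb{x}^k-W_\infty\mb{x}^k\|^2+2\alpha^2\|\mb{y}^k\|^2$ via $\|I-W\|\le 2$. Then I would split $\|\mb{y}^k\|^2\le 2\|\mb{y}^k-W_\infty\mb{y}^k\|^2+2n\|\ol{\mb{r}}^k\|^2$ and bound $n\mathbb{E}\|\ol{\mb{r}}^k\|^2$ by three pieces: (i) the conditional independence of $\{\mb{r}_i^k\}$ given $\mc{F}^k$ yields $\mathbb{E}\|\ol{\mb{r}}^k-\ol{\nabla\mb{f}}(\mb{x}^k)\|^2=\tfrac{1}{n^2}\mathbb{E}\|\mb{r}^k-\nabla\mb{f}(\mb{x}^k)\|^2$; (ii) Lemma~\ref{L-bound} bounds $\|\ol{\nabla\mb{f}}(\mb{x}^k)-\nabla F(\ol{\mb{x}}^k)\|^2$ by the consensus error; and (iii) $L$-smoothness of $F$ together with $\nabla F(\mb{x}^*)=\bfzero$ gives $\|\nabla F(\ol{\mb{x}}^k)\|^2\le L^2\|\ol{\mb{x}}^k-\mb{x}^*\|^2$.

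Finally, I would assemble everything and invoke $\alpha\le\tfrac{1}{4\sqrt{2}L}$, i.e., $\alpha^2L^2\le 1/32$, to absorb all $\alpha^2 L^2$ prefactors: in particular, terms of the form $\alpha^2 L^4\|\ol{\mb{x}}^k-\mb{x}^*\|^2$ and $\alpha^2 L^2\|\mb{x}^k-W_\infty\mb{x}^k\|^2$ contract into $O(L^2)$ coefficients that combine with the Step~1 constants to yield the stated $\frac{33L^2}{1-\sigma^2}$, $\frac{L^2}{1-\sigma^2}$, and the extra $\frac{32L^2\alpha^2}{1-\sigma^2}$ on the gradient tracking error. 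The main obstacle I expect is the bookkeeping: several nested applications of Young's inequality each introduce parameters that must be synchronized so that the two \emph{distinct} variance coefficients ($\tfrac{5}{1-\sigma^2}$ at step $k$ and $\tfrac{4}{1-\sigma^2}$ at step $k+1$) come out correctly---the asymmetry arises because $B$ picks up additional weight both from the Young's on $\|C-B\|^2$ and from the $\|\mb{y}^k\|^2\to n\|\ol{\mb{r}}^k\|^2$ chain, while $A$ appears only in the orthogonal piece.
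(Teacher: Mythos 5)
Your proposal follows essentially the same route as the paper's proof: the identical contraction-plus-Young step producing the $\tfrac{1+\sigma^2}{2}$ and $\tfrac{2}{1-\sigma^2}$ factors, the same conditional-unbiasedness argument to decompose $\mathbb{E}[\|\mb{r}^{k+1}-\mb{r}^k\|^2]$ into the two variance terms plus $L^2\mathbb{E}[\|\mb{x}^{k+1}-\mb{x}^k\|^2]$, and the same bound on $\|\mb{y}^k\|^2$ by the consensus error, optimality gap, tracking error, and gradient variance, absorbed via $\alpha^2L^2\leq 1/32$. Your two micro-deviations --- using the orthogonality $\mathbb{E}\|A+(C-B)\|^2=\mathbb{E}\|A\|^2+\mathbb{E}\|C-B\|^2$ in place of the paper's $2\|A-B\|^2+2\|C\|^2$ followed by orthogonality, and routing $W_\infty\mb{y}^k$ through $n\|\ol{\mb{r}}^k\|^2$ with a three-way split rather than the paper's direct four-way split of $\mb{y}^k$ --- are sound but shift the Young parameters, so the final constants must be verified to land at (or below) the stated $33$, $1$, $32$, $5$, $4$ rather than assumed.
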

\begin{proof}
Using~\eqref{alg_b} and the fact that~$W_\infty W= W_\infty$, we have:
{\begin{align}\label{y0}
&\left\|\mb{y}^{k+1}-W_\infty\mb{y}^{k+1}\right\|^2 \nonumber\\
=& \left\|W\mb{y}^{k} + \mb{r}^{k+1} - \mb{r}^{k} -W_\infty\left(W\mb{y}^{k} + \mb{r}^{k+1} - \mb{r}^{k}\right)\right\|^2 \nonumber\\
=& \left\|W\mb{y}^{k}-W_\infty\mb{y}^{k}+\left(I_{np}-W_\infty\right)\left(\mb{r}^{k+1} - \mb{r}^{k}\right)\right\|^2.
\end{align}}To proceed from~\eqref{y0}, we use Young's inequality that~$\|\mb{a}+\mb{b}\|^2\leq(1+\eta)\|\mb{a}\|^2+(1+\frac{1}{\eta})\|\mb{b}\|^2,\forall\mb{a},\mb{b}\in\mathbb{R}^{np}$ with~$\eta=\frac{2\sigma^2}{1-\sigma^2}$ and that~$\mn{I_{np}-W_\infty} = 1$ together with Lemma~\ref{W_contract} to obtain:
{\begin{align}
&\left\|\mb{y}^{k+1}-W_\infty\mb{y}^{k+1}\right\|^2 \nonumber\\
\leq&~\left(1+\frac{1-\sigma^2}{2\sigma^2}\right)\left\|W\mb{y}^{k}-W_\infty\mb{y}^{k}\right\|^2 \nonumber\\
&+ \left(1+\frac{2\sigma^2}{1-\sigma^2}\right)\left\|\left(I_{np}-W_\infty\right)\left(\mb{r}^{k+1}-\mb{r}^{k}\right)\right\|^2\nonumber\\
\leq&~\frac{1+\sigma^2}{2}\left\|\mb{y}^{k}-W_\infty\mb{y}^{k}\right\|^2 + \frac{2}{1-\sigma^2}\left\|\mb{r}^{k+1}-\mb{r}^{k}\right\|^2.
\end{align}}We then take the total expectation to obtain:
{
\begin{align}
&\mathbb{E}\left[\left\|\mb{y}^{k+1}-W_\infty\mb{y}^{k+1}\right\|^2\right] \nonumber\\
\leq~&\frac{1+\sigma^2}{2}\mathbb{E}\left[\left\|\mb{y}^{k}-W_\infty\mb{y}^{k}\right\|^2\right] + \frac{2}{1-\sigma^2}\mathbb{E}\left[\left\|\mb{r}^{k+1}-\mb{r}^{k}\right\|^2\right] \label{yk-contract}
\end{align}
}Now, we derive an upper bound for~$\mathbb{E}[\|\mb{r}^{k+1}-\mb{r}^{k}\|^2]$. Firstly,
{\small\begin{align}
&\mathbb{E}\left[\|\mb{r}^{k+1}-\mb{r}^{k}\|^2\right]\nonumber\\
\leq&~2\mathbb{E}\left[\|\mb{r}^{k+1}-\mb{r}^{k}-(\nabla\mb{f}(\mb{x}^{k+1})-\nabla\mb{f}(\mb{x}^{k}))\|^2\right] \nonumber\\
&+ 2\mathbb{E}\left[\|\nabla\mb{f}(\mb{x}^{k+1})-\nabla\mb{f}(\mb{x}^{k})\|^2\right]\nonumber\\
\leq&~2\mathbb{E}\left[\|\mb{r}^{k}-\nabla\mb{f}(\mb{x}^{k})\|^2\right]
+ 2\mathbb{E}\left[\|\mb{r}^{k+1}-\nabla\mb{f}(\mb{x}^{k+1})\|^2\right]
\nonumber\\
&+ 2L^2\mathbb{E}\left[\|\mb{x}^{k+1}-\mb{x}^{k}\|^2\right]\label{sg-diff}
\end{align}}where in the last inequality above we used that
\begin{align*}
&\mathbb{E}\left[\langle\mb{r}^{k+1}-\nabla\mb{f}(\mb{x}^{k+1}),\mb{r}^{k}-\nabla\mb{f}(\mb{x}^{k})\rangle\right] \nonumber\\
=~&\mathbb{E}\left[\mathbb{E}\left[\langle\mb{r}^{k+1}-\nabla\mb{f}(\mb{x}^{k+1}),\mb{r}^{k}-\nabla\mb{f}(\mb{x}^{k})\rangle|\mc{F}^{k+1}\right]\right]=0.
\end{align*}
We next bound $\mathbb{E}\left[\left\|\mb{x}^{k+1}-\mb{x}^{k}\right\|^2\right]$. Using~\eqref{alg_a} leads to: 
\begin{align}
\left\|\mb{x}^{k+1}-\mb{x}^{k}\right\|^2 
=& \left\|W\mb{x}^{k}-\alpha\mb{y}^k-\mb{x}^{k}\right\|^2\nonumber\\
=& \left\|\left(W-I_{np}\right)\left(\mb{x}^{k}-W_\infty\mb{x}^{k}\right)-\alpha\mb{y}^k\right\|^2 \nonumber\\
\leq&~8\left\|\mb{x}^{k}-W_\infty\mb{x}^{k}\right\|^2 + 2\alpha^2\left\|\mb{y}^k\right\|^2,\label{x-diff0}
\end{align}
where in~\eqref{x-diff0} we used the fact that~$\mn{W-I_{np}}\leq2$. We then denote~$\nabla\mb{f}(\mb{x}^*) \coloneqq  \left[\nabla f_1(\mb{x}^*)^\top,\cdots,\nabla f_n(\mb{x}^*)^\top\right]^\top$ and note that~$(\mb{1}_n^\top\otimes I_p)\nabla\mb{f}(\mb{x}^*) = \mb{0}_p$. We bound~$\|\mb{y}^k\|$ as follows.
\begin{align}
\left\|\mb{y}^k\right\| =&\Big\|\mb{y}^k-W_\infty\mb{y}^k+W_\infty\mb{r}^k-W_\infty\nabla\mb{f}(\mb{x}^k)\nonumber\\
&\qquad+W_\infty\nabla\mb{f}(\mb{x}^k)-W_\infty\nabla\mb{f}(\mb{x}^*)\Big\|\nonumber\\
\leq& \left\|\mb{y}^k-W_\infty\mb{y}^k\right\| + \left\|\mb{r}^k-\nabla\mb{f}(\mb{x}^k)\right\| \nonumber\\
&+ L\left\|\mb{x}^k-(\mb{1}_n\otimes I_p)\mb{x}^*\right\|\nonumber\\
\leq& \left\|\mb{y}^k-W_\infty\mb{y}^k\right\| + \left\|\mb{r}^k-\nabla\mb{f}(\mb{x}^k)\right\| \nonumber\\
&+ L\left\|\mb{x}^k-W_\infty\mb{x}^k\right\| + \sqrt{n}L\left\|\ol{\mb{x}}^k-\mb{x}^*\right\|,
\nonumber
\end{align}
where in the first equality we used~$\ol{\mb{y}}^k=\ol{\mb{r}}^k,\forall k\geq0$. Squaring the above inequality obtains the following:
\begin{align}
\left\|\mb{y}^k\right\|^2 
\leq&~ 4L^2\left\|\mb{x}^k-W_\infty\mb{x}^k\right\|^2 + 4nL^2\left\|\ol{\mb{x}}^k-\mb{x}^*\right\|^2 
\nonumber\\
&+ 4\left\|\mb{y}^k-W_\infty\mb{y}^k\right\|^2 + 4\left\|\mb{r}^k-\nabla\mb{f}(\mb{x}^k)\right\|^2.
\label{yk}
\end{align}
Using~\eqref{yk} in~\eqref{x-diff0} with the requirement that~$0<\alpha\leq\frac{1}{4\sqrt{2}L}$ and taking the total expectation, we have:
{\small\begin{align}
&\mathbb{E}\left[\left\|\mb{x}^{k+1}-\mb{x}^{k}\right\|^2\right] \nonumber\\
\leq&~  
8.25\mathbb{E}\left[\left\|\mb{x}^k-W_\infty\mb{x}^k\right\|^2\right] + 0.25\mathbb{E}\left[n\left\|\ol{\mb{x}}^k-\mb{x}^*\right\|^2\right] \nonumber\\
&+ 8\alpha^2\mathbb{E}\left[\left\|\mb{y}^k-W_\infty\mb{y}^k\right\|^2\right] 
+ 8\alpha^2\mathbb{E}\left[\left\|\mb{r}^k-\nabla\mb{f}(\mb{x}^k)\right\|^2\right]. 
\label{x-diff1}
\end{align}}
Finally, we apply~\eqref{x-diff1} in~\eqref{sg-diff} with~$0<\alpha\leq\frac{1}{4\sqrt{2}L}$ to obtain:
{\begin{align}
&\mathbb{E}\left[\left\|\mb{r}^{k+1}-\mb{r}^{k}\right\|^2\right] \nonumber\\
\leq&~ 16.5L^2\mathbb{E}\left[\left\|\mb{x}^k-W_\infty\mb{x}^k\right\|^2\right] + 0.5L^2\mathbb{E}\left[n\left\|\ol{\mb{x}}^k-\mb{x}^*\right\|^2\right] \nonumber\\
&+ 16\alpha^2L^2\mathbb{E}\left[\left\|\mb{y}^k-W_\infty\mb{y}^k\right\|^2\right]
\nonumber\\
&+ 2.5\mathbb{E}\left[\left\|\mb{r}^{k}-\nabla\mb{f}(\mb{x}^{k})\right\|^2\right]
\nonumber
+ 2\mathbb{E}\left[\left\|\mb{r}^{k+1}-\nabla\mb{f}(\mb{x}^{k+1})\right\|^2\right]
\end{align}}Using the above inequality in~\eqref{yk-contract} completes the proof.
\end{proof}

We finally present a general convergence result on a sequence of random variables that converge linearly in the mean-square sense. We note that this result is implied in the probability literature; see~\cite{probability_williams} for example. For the sake of completeness, we present its proof here.
\begin{lem}\label{BC}
Let~$\{X_k\}_{k\geq0}$ be a sequence of random variables such that~$\mathbb{E}[|X_k|] \leq \gamma^k$ for some~$0<\gamma<1$. Then we have
\begin{align*}
\mathbb{P}\left(\lim_{k\rightarrow\infty}(\gamma + \delta)^{-k}|X_k| = 0\right) = 1,
\end{align*}
where~$\delta > 0$ is an arbitrary positive constant.
\end{lem}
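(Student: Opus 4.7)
The plan is to combine Markov's inequality with the first Borel-Cantelli lemma, which is the standard route for upgrading a mean-square (or first-moment) geometric decay estimate to almost-sure convergence at a slightly slower geometric rate. I would first fix an arbitrary $\delta>0$ and an arbitrary $\varepsilon>0$, and then apply Markov's inequality to the nonnegative random variable $(\gamma+\delta)^{-k}|X_k|$:
\begin{align*}
\mathbb{P}\bigl((\gamma+\delta)^{-k}|X_k| > \varepsilon\bigr)
\leq \frac{\mathbb{E}[|X_k|]}{\varepsilon(\gamma+\delta)^k}
\leq \frac{1}{\varepsilon}\left(\frac{\gamma}{\gamma+\delta}\right)^{\!k}.
\end{align*}

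Since $\gamma/(\gamma+\delta)\in(0,1)$, the geometric series $\sum_{k\geq 0}\bigl(\gamma/(\gamma+\delta)\bigr)^k$ is finite, so the probabilities above are summable. By the first Borel-Cantelli lemma, the event $A_\varepsilon \coloneqq \{(\gamma+\delta)^{-k}|X_k| > \varepsilon\text{ infinitely often}\}$ has probability zero. Equivalently, on the complement of $A_\varepsilon$, which has probability one, we have $\limsup_{k\to\infty}(\gamma+\delta)^{-k}|X_k|\leq\varepsilon$.

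To pass from ``for every $\varepsilon>0$ separately'' to ``$\limsup = 0$ almost surely,'' I would intersect over a countable sequence, say $\varepsilon = 1/\ell$ for $\ell\in\mathbb{N}$. The event $\bigcap_{\ell\geq 1}A_{1/\ell}^c$ is a countable intersection of probability-one events, hence itself has probability one, and on this event $\limsup_{k\to\infty}(\gamma+\delta)^{-k}|X_k|\leq 1/\ell$ for every $\ell$, so the $\limsup$ (and hence the limit, since the sequence is nonnegative) equals zero.

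I do not anticipate any real obstacle here; the argument is essentially textbook. The only minor subtlety worth stating explicitly is the countable-intersection step, because Borel-Cantelli only gives the conclusion for each fixed $\varepsilon$, and one must avoid the (incorrect) appeal to an uncountable union of null sets. Everything else is an application of Markov's inequality and summability of a geometric series with ratio strictly less than one, both of which are immediate consequences of the hypothesis $\mathbb{E}[|X_k|]\leq \gamma^k$ with $\gamma<\gamma+\delta$.
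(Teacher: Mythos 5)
Your proof is correct and follows essentially the same route as the paper's: a Markov-type inequality applied to $(\gamma+\delta)^{-k}|X_k|$, summability of the resulting geometric series, and the first Borel--Cantelli lemma. The only difference is that you spell out the countable intersection over $\varepsilon = 1/\ell$ at the end, a step the paper leaves implicit with ``the proof follows.''
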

\begin{proof}
By Chebyshev's inequality, we have:~$\forall \epsilon>0,\forall\delta>0$,
\begin{align*}
\mathbb{P}\left((\gamma + \delta)^{-k}|X_k| > \epsilon \right)
\leq&~\epsilon^{-1}\mathbb{E}[(\gamma + \delta)^{-k}|X_k|] \nonumber\\
\leq&~\epsilon^{-1}(\gamma/(\gamma + \delta))^{k}.
\end{align*}
Summing the inequality above over~$k$, we obtain:
\begin{align*}
\sum_{k=0}^{\infty}\mathbb{P}\left((\gamma + \delta)^{-k}|X_k| > \epsilon \right)
\leq \epsilon^{-1}\sum_{k=0}^{\infty}\Big(\frac{\gamma}{\gamma + \delta}\Big)^{k} < \infty.
\end{align*}
By the Borel-Cantelli lemma,
\begin{align*}
\mathbb{P}\left((\gamma + \delta)^{-k}|X_k| > \epsilon~\text{for infinitely many}~k\right)
= 0,
\end{align*}
and the proof follows.
\end{proof}
We note that Lemma~\ref{BC} states that the non-asymptotic linear convergence of a sequence of random variables in the mean-square sense implies its asymptotic linear convergence in the almost sure sense. As a consequence, Corollaries~\ref{as_gtsaga} and~\ref{as_gtsvrg} will be immediately at hand once Theorems~\ref{main_gtsaga} and~\ref{main_gtsvrg} are established.
 
\noindent With the help of the auxiliary results on the general dynamical system~\eqref{alg_a}-\eqref{alg_b} established in this section, we now derive explicit convergence rates for the proposed algorithms, \textbf{\texttt{GT-SAGA}} and \textbf{\texttt{GT-SVRG}}, in the next sections.

\section{Convergence analysis of \textbf{\texttt{GT-SAGA}}}\label{sGTSAGA} 
In this section, we establish the mean-square linear convergence of \textbf{\texttt{GT-SAGA}} described in Algorithm~\ref{GT-SAGA}. Following the unified representation in~\eqref{alg_a}-\eqref{alg_b}, we recall that the local gradient estimator~$\mb{r}_i^k$ is given by~$\mb{g}_i^k$  in~\textbf{\texttt{GT-SAGA}}, i.e.,~$\forall i\in\{1,\cdots,n\},\forall k\geq1$,
\begin{align*}
\mb{g}_{i}^{k} = \nabla f_{i,s_i^{k}}\big(\mb{x}_{i}^{k}\big) - \nabla f_{i,s_i^{k}}\big(\mb{z}_{i,s_i^{k}}^{k}\big) + \tfrac{1}{m_i}\textstyle\sum_{j=1}^{m_i}\nabla f_{i,j}\big(\mb{z}_{i,j}^{k}\big),
\end{align*}
where~$s_i^k$ is selected uniformly at random from~$\{1,\cdots,m_i\}$ and the auxiliary variable~$\mb{z}_{i,j}^k$ is the most recent iterate where the component gradient~$\nabla f_{i,j}$ was evaluated up to time~$k$.

\subsection{Bounding the variance of the gradient estimator} We first derive an upper bound for~$\mathbb{E}\left[\|\mb{g}^k-\nabla\mb{f}(\mb{x}^k)\|^2\right]$ that is the variance of the gradient estimator~$\mb{g}^k$. To do this, we define~$t_i^k$ as the averaged optimality gap of the auxiliary variables of~$\{\mb{z}_{i,j}^k\}_{j=1}^{m_i}$ at node~$i$ as follows:  
\begin{align}
\textstyle t_i^k \coloneqq  \frac{1}{m_i}\sum_{j=1}^{m_i}\left\|\mb{z}_{i,j}^k-\mb{x}^*\right\|^2, \qquad
t^k \coloneqq  \sum_{i=1}^{n}t_i^k.
\end{align}
The following lemma shows that~$t^k$ has an intrinsic contraction property. Recall that~$M = \max_i{m_i}$ and~$m = \min_i{m_i}$.
\begin{lem}\label{t_contract}
Consider the iterates~$\{t^k\}$ generated by~\textbf{\texttt{GT-SAGA}}. We have the following holds:~$\forall k\geq1$,
\begin{align}
\mathbb{E}\left[t^{k+1}\right]
\leq&\left(1-\frac{1}{M}\right)\mathbb{E}\left[t^{k}\right]
+ \frac{2}{m}\mathbb{E}\left[\left\|\mb{x}^k-W_\infty\mb{x}^k\right\|^2\right] \nonumber\\
&+ \frac{2}{m}\mathbb{E}\left[n\left\|\ol{\mb{x}}^k-\mb{x}^*\right\|^2\right]. \nonumber
\end{align}
\end{lem}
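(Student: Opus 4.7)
The plan is to compute $\mathbb{E}[t^{k+1}\mid \mathcal{F}^k]$ exactly, exploiting the fact that at iteration $k$ only the $s_i^k$-th entry of the gradient table at node $i$ is refreshed. Since $s_i^k$ is drawn uniformly from $\{1,\ldots,m_i\}$ and is independent of $\mathcal{F}^k$, the update rule $\mathbf{z}_{i,j}^{k+1} = \mathbf{x}_i^k$ if $j=s_i^k$ and $\mathbf{z}_{i,j}^{k+1}=\mathbf{z}_{i,j}^k$ otherwise yields the pointwise identity
\begin{align*}
\mathbb{E}\!\left[\|\mathbf{z}_{i,j}^{k+1}-\mathbf{x}^*\|^2 \,\big|\,\mathcal{F}^k\right] = \tfrac{1}{m_i}\|\mathbf{x}_i^k-\mathbf{x}^*\|^2 + \bigl(1-\tfrac{1}{m_i}\bigr)\|\mathbf{z}_{i,j}^k-\mathbf{x}^*\|^2.
\end{align*}

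Next, I would average this identity over $j\in\{1,\ldots,m_i\}$ to obtain
\begin{align*}
\mathbb{E}[t_i^{k+1}\mid \mathcal{F}^k] = \tfrac{1}{m_i}\|\mathbf{x}_i^k-\mathbf{x}^*\|^2 + \bigl(1-\tfrac{1}{m_i}\bigr) t_i^k,
\end{align*}
and then sum over $i$. Using the uniform bounds $\tfrac{1}{m_i}\le\tfrac{1}{m}$ and $1-\tfrac{1}{m_i}\le 1-\tfrac{1}{M}$ to collapse the per-node coefficients, this gives
\begin{align*}
\mathbb{E}[t^{k+1}\mid \mathcal{F}^k] \le \bigl(1-\tfrac{1}{M}\bigr) t^k + \tfrac{1}{m}\sum_{i=1}^n\|\mathbf{x}_i^k-\mathbf{x}^*\|^2.
\end{align*}

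To match the statement of the lemma, I would rewrite the residual as a norm of the stacked vector, $\sum_i\|\mathbf{x}_i^k-\mathbf{x}^*\|^2 = \|\mathbf{x}^k-(\mathbf{1}_n\otimes I_p)\mathbf{x}^*\|^2$, and split it by inserting the network average, writing
\begin{align*}
\mathbf{x}^k-(\mathbf{1}_n\otimes I_p)\mathbf{x}^* = (\mathbf{x}^k-W_\infty\mathbf{x}^k) + (\mathbf{1}_n\otimes(\bar{\mathbf{x}}^k-\mathbf{x}^*)).
\end{align*}
Young's inequality $\|a+b\|^2\le 2\|a\|^2+2\|b\|^2$ then yields the bound $2\|\mathbf{x}^k-W_\infty\mathbf{x}^k\|^2+2n\|\bar{\mathbf{x}}^k-\mathbf{x}^*\|^2$, and taking total expectation gives the claim.

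I do not foresee a significant obstacle here; the argument is an elementary conditional-expectation computation combined with one application of Young's inequality. The only fiddly point is bookkeeping across heterogeneous dataset sizes $m_i$, which is handled by the coarse uniform bounds involving $m$ and $M$. Strictly speaking one could use orthogonality of $\mathbf{x}^k-W_\infty\mathbf{x}^k$ and the consensus component to replace the factor $2$ by $1$, but the Young's-inequality form is consistent with the rest of the analysis in Section~\ref{general} and suffices for the subsequent Lyapunov argument.
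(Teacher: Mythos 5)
Your proposal is correct and follows essentially the same route as the paper's proof: condition on $\mc F^k$, use that each table entry is refreshed with probability $1/m_i$ to get $\mathbb{E}[t_i^{k+1}\mid\mc F^k]=(1-\tfrac{1}{m_i})t_i^k+\tfrac{1}{m_i}\|\mb x_i^k-\mb x^*\|^2$, bound the coefficients by $1-\tfrac1M$ and $\tfrac1m$, and split $\|\mb x_i^k-\mb x^*\|^2$ via Young's inequality into the consensus error and the network optimality gap. The only (immaterial) difference is that you sum over $i$ before splitting, whereas the paper splits per node and then sums.
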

\begin{proof}
Recall Algorithm~\ref{GT-SAGA} and note that~$\forall k\geq1,~\mb{z}_{i,j}^{k+1} = \mb{z}_{i,j}^{k}$ with probability~$1-\frac{1}{m_i}$ and~$\mb{z}_{i,j}^{k+1} = \mb{x}_{i}^{k}$ with probability~$\frac{1}{m_i}$ given~$\mc{F}^k$. Then we have the following holds:~$\forall i,\forall k\geq1$,
{\small\begin{align}
&\mathbb{E}\left[t_i^{k+1}|\mc{F}^k\right] \nonumber\\
=&~\frac{1}{m_i}\sum_{j=1}^{m_i}\mathbb{E}\left[\left\|\mb{z}_{i,j}^{k+1}-\mb{x}^*\right\|^2|\mc{F}^k\right]
\nonumber\\
=&~\frac{1}{m_i}\sum_{j=1}^{m_i}\mathbb{E}\left[\left(1-\frac{1}{m_i}\right)\left\|\mb{z}_{i,j}^{k}-\mb{x}^*\right\|^2 + \frac{1}{m_i}\left\|\mb{x}_{i}^{k}-\mb{x}^*\right\|^2\Big|\mc{F}^k\right]\nonumber\\
=&~\left(1-\frac{1}{m_i}\right)t_i^k + \frac{1}{m_i}\left\|\mb{x}_{i}^{k}-\mb{x}^*\right\|^2\nonumber\\
\leq&~\left(1-\frac{1}{M}\right)t_i^k + \frac{2}{m}\left\|\mb{x}_{i}^{k}-\ol{\mb{x}}^k\right\|^2
+ \frac{2}{m}\left\|\ol{\mb{x}}^k-\mb{x}^*\right\|^2 \label{t_contract_i}
\end{align}}The proof follows by summing~\eqref{t_contract_i} over~$i$ and taking the total expectation.
\end{proof}
In the next lemma, we bound the stochastic gradient variance~$\mathbb{E}\left[\|\mb{g}^k-\nabla\mb{f}(\mb{x}^k)\|^2\right]$ by the mean-square consensus error and the optimality gap of~$\mb{x}^k$ and~$t^k$.
\begin{lem}\label{var_bounnd_saga}
Let Assumption~\ref{smooth} hold. Consider the iterates~$\{\mb{g}^k\}$ generated by \textbf{\texttt{GT-SAGA}}. Then we have the following inequality hold:~$\forall k\geq1$, \begin{align}
\mathbb{E}\left[\left\|\mb{g}^k-\nabla\mb{f}(\mb{x}^k)\right\|^2\right] 
\leq&~4L^2\mathbb{E}\left[\left\|\mb{x}^k-W_\infty\mb{x}^k\right\|^2\right] \nonumber\\
+& 4L^2\mathbb{E}\left[n\left\|\ol{\mb{x}}^k-\mb{x}^*\right\|^2\right] + 2L^2 \mathbb{E}\left[t^k\right]. \nonumber
\end{align}
\end{lem}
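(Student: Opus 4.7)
The plan is to exploit the standard SAGA-style variance decomposition node-by-node and then aggregate using the consensus/average splitting of $\mb x^k$. Recall that $\mb g_i^k = A_i^k + B_i^k$ where $A_i^k := \nabla f_{i,s_i^k}(\mb x_i^k) - \nabla f_{i,s_i^k}(\mb z_{i,s_i^k}^k)$ is the only source of randomness given $\mc F^k$ and $B_i^k := \tfrac{1}{m_i}\sum_j \nabla f_{i,j}(\mb z_{i,j}^k)$ is $\mc F^k$-measurable. Since $s_i^k$ is uniform on $\{1,\dots,m_i\}$, we have $\mathbb{E}[A_i^k \mid \mc F^k] = \nabla f_i(\mb x_i^k) - B_i^k$, hence $\mathbb{E}[\mb g_i^k \mid \mc F^k] = \nabla f_i(\mb x_i^k)$, confirming unbiasedness.

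The first substantive step is to bound the conditional variance of $\mb g_i^k$. Using the fact that subtracting the mean only reduces the second moment,
\begin{align*}
\mathbb{E}\!\left[\|\mb g_i^k - \nabla f_i(\mb x_i^k)\|^2 \,\big|\, \mc F^k\right]
&= \mathbb{E}\!\left[\|A_i^k - \mathbb{E}[A_i^k\mid \mc F^k]\|^2 \,\big|\, \mc F^k\right] \\
&\leq \mathbb{E}\!\left[\|A_i^k\|^2 \,\big|\, \mc F^k\right].
\end{align*}
Next, I would expand the expectation over the uniform draw of $s_i^k$ to get $\tfrac{1}{m_i}\sum_{j=1}^{m_i}\|\nabla f_{i,j}(\mb x_i^k) - \nabla f_{i,j}(\mb z_{i,j}^k)\|^2$, and invoke Assumption~\ref{smooth} ($L$-smoothness of each $f_{i,j}$) to upper bound this by $\tfrac{L^2}{m_i}\sum_{j=1}^{m_i}\|\mb x_i^k - \mb z_{i,j}^k\|^2$.

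The next step inserts $\mb x^*$ and applies the elementary inequality $\|\mb a - \mb b\|^2 \leq 2\|\mb a\|^2 + 2\|\mb b\|^2$ to each summand, giving
\begin{align*}
\mathbb{E}\!\left[\|\mb g_i^k - \nabla f_i(\mb x_i^k)\|^2 \,\big|\, \mc F^k\right]
\leq 2L^2 \|\mb x_i^k - \mb x^*\|^2 + 2L^2 t_i^k,
\end{align*}
using the definition of $t_i^k$. Summing over $i\in\{1,\dots,n\}$, noting that $\|\mb g^k - \nabla\mb f(\mb x^k)\|^2 = \sum_i \|\mb g_i^k - \nabla f_i(\mb x_i^k)\|^2$ by block structure, yields $2L^2 \sum_i \|\mb x_i^k - \mb x^*\|^2 + 2L^2 t^k$ as the conditional bound.

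Finally, I would split $\mb x_i^k - \mb x^* = (\mb x_i^k - \ol{\mb x}^k) + (\ol{\mb x}^k - \mb x^*)$ and apply $\|\mb a + \mb b\|^2 \leq 2\|\mb a\|^2 + 2\|\mb b\|^2$ once more to separate the consensus contribution from the network optimality gap, getting $\sum_i \|\mb x_i^k - \mb x^*\|^2 \leq 2\|\mb x^k - W_\infty \mb x^k\|^2 + 2n\|\ol{\mb x}^k - \mb x^*\|^2$. Plugging this in and taking total expectations delivers the claimed bound with the factors $4L^2,\,4L^2,\,2L^2$. The proof is essentially a chain of Young-type splittings; the only subtle point is recognizing in the very first step that only the $s_i^k$-dependent part of $\mb g_i^k$ contributes to the conditional variance, which avoids a looser bound that would involve $\|B_i^k - \nabla f_i(\mb x_i^k)\|^2$ directly.
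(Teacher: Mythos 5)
Your proposal is correct and follows essentially the same route as the paper's proof: isolate the $s_i^k$-dependent term, bound the conditional variance by the uncentered second moment, apply $L$-smoothness and a Young splitting through $\mb{x}^*$ to reach $2L^2\|\mb{x}_i^k-\mb{x}^*\|^2+2L^2 t_i^k$, then split off the consensus error and sum over $i$. The only cosmetic difference is that you insert $\mb{x}^*$ at the iterate level after applying Lipschitz continuity, whereas the paper inserts $\nabla f_{i,j}(\mb{x}^*)$ at the gradient level first; both yield the identical bound.
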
   
\begin{proof}
Recall the local gradient estimator~$\mb{g}_i^k$ from Algorithm~\ref{GT-SAGA} and proceed as follows.
\begin{align}
&\mathbb{E}\left[\left\|\mb{g}_i^k-\nabla f_i(\mb{x}_i^k)\right\|^2|\mc{F}^k\right] \nonumber\\
=&~\mathbb{E}\Big[\Big\|\nabla f_{i,s_i^{k}}\big(\mb{x}_{i}^{k}\big) - \nabla f_{i,s_i^{k}}\big(\mb{z}_{i,s_i^{k}}^{k}\big) \nonumber\\
&\qquad\qquad-\left(\nabla f_i(\mb{x}_i^k) 
- \tfrac{1}{m_i}\textstyle\sum_{j=1}^{m_i}\nabla f_{i,j}\big(\mb{z}_{i,j}^{k}\big)\right)\Big\|^2 \Big|\mc{F}^k\Big] \nonumber\\
\leq&~\mathbb{E}\left[\left\|\nabla f_{i,s_i^{k}}\big(\mb{x}_{i}^{k}\big) - \nabla f_{i,s_i^{k}}\big(\mb{z}_{i,s_i^{k}}^{k}\big) \right\|^2\Big|\mc{F}^k\right] \nonumber\\
=&~\textstyle{\frac{1}{m_i}\sum_{j=1}^{m_i}}\Big\|\left(\nabla f_{i,j}\big(\mb{x}_{i}^{k}\big) -\nabla f_{i,j}(\mb{x}^*)\right) \nonumber\\
&\qquad\qquad\qquad+ \left(\nabla f_{i,j}(\mb{x}^*)- \nabla f_{i,j}\big(\mb{z}_{i,j}^{k}\big)\right) \Big\|^2 \nonumber\\
\leq&~2L^2\left\|\mb{x}_i^k-\mb{x}^*\right\|^2 + 2L^2t_i^k\nonumber\\
\leq&~4L^2\left\|\mb{x}_i^k-\ol{\mb{x}}^k\right\|^2 + 4L^2\left\|\ol{\mb{x}}^k-\mb{x}^*\right\|^2 + 2L^2 t_i^k, \label{var_saga_i}
\end{align}
where the second inequality uses the standard conditional variance decomposition
\begin{align}\label{VRD}
&\mathbb{E}\left[\left\|\mb{a}_i^k-\mathbb{E}\left[\mb{a}_i^k|\mc{F}^k\right]\right\|^2|\mc{F}^k\right]\nonumber\\
=&~\mathbb{E}\left[\left\|\mb{a}_i^k\right\|^2|\mc{F}^k\right] -\left\|\mathbb{E}\left[\mb{a}_i^k|\mc{F}^k\right]\right\|^2  \leq\mathbb{E}\left[\left\|\mb{a}_i^k\right\|^2|\mc{F}^k\right],
\end{align}
with~$\mb{a}_i^k = \nabla f_{i,s_i^{k}}\big(\mb{x}_{i}^{k}\big) - \nabla f_{i,s_i^{k}}\big(\mb{z}_{i,s_i^{k}}^{k}\big)$. The proof follows by summing~\eqref{var_saga_i} over~$i$ and taking the total expectation.
\end{proof}  
Lemma~\ref{var_bounnd_saga} clearly shows that as~$\mb{x}_i^k$ and~$\mb{z}_{i,j}^k$ approach to an agreement on~$\mb{x}^*$, the variance of the gradient estimator decays to zero. We have the following corollary.
\begin{corollary}\label{var_bound_saga_2}
Let Assumption~\ref{smooth} and~\ref{connect} hold. Consider the iterates~$\{\mb{g}^k\}$ generated by \textbf{\texttt{GT-SAGA}}. If~$0<\alpha\leq\frac{1}{4\sqrt{2}L}$, then the following inequality holds~$\forall k\geq0$,
\begin{align}
&\mathbb{E}\left[\left\|\mb{g}^{k+1}-\nabla\mb{f}(\mb{x}^{k+1})\right\|^2\right] \nonumber\\
\leq&~12.75L^2\mathbb{E}\left[\left\|\mb{x}^k-W_\infty\mb{x}^k\right\|^2\right] + 12.5L^2\mathbb{E}\left[n\left\|\ol{\mb{x}}^k-\mb{x}^*\right\|^2\right]  \nonumber\\
& + 8L^2\alpha^2\mathbb{E}\left[\left\|\mb{y}^k-W_\infty\mb{y}^k\right\|^2\right] + 2.25L^2 \mathbb{E}\left[t^k\right]. \nonumber
\end{align}
\end{corollary}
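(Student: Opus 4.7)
The plan is to derive the bound at time $k+1$ by feeding Lemma~\ref{var_bounnd_saga} into itself, composed with the one-step recursions for the consensus error, the network optimality gap, and $t^k$. First, I would apply Lemma~\ref{var_bounnd_saga} at iteration $k+1$ to obtain
$$\mathbb{E}\bigl[\|\mb{g}^{k+1}-\nabla\mb{f}(\mb{x}^{k+1})\|^2\bigr] \leq 4L^2\mathbb{E}\bigl[\|\mb{x}^{k+1}-W_\infty\mb{x}^{k+1}\|^2\bigr] + 4L^2\mathbb{E}\bigl[n\|\ol{\mb{x}}^{k+1}-\mb{x}^*\|^2\bigr] + 2L^2\mathbb{E}[t^{k+1}].$$
This reduces the problem to bounding three quantities evaluated at time $k+1$ by quantities at time $k$.

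Next, I would substitute three already-proved one-step inequalities: Lemma~\ref{consensus} equation~\eqref{consensus2} for the consensus term, equation~\eqref{opt_2} for the optimality-gap term, and Lemma~\ref{t_contract} for $\mathbb{E}[t^{k+1}]$. The first two yield clean multiples of $\mathbb{E}[\|\mb{x}^k-W_\infty\mb{x}^k\|^2]$, $\mathbb{E}[n\|\ol{\mb{x}}^k-\mb{x}^*\|^2]$, and $\mathbb{E}[\|\mb{y}^k-W_\infty\mb{y}^k\|^2]$, but equation~\eqref{opt_2} also injects a stray variance term $\tfrac{\alpha^2}{n}\mathbb{E}[\|\mb{r}^k-\nabla\mb{f}(\mb{x}^k)\|^2]$. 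To handle this, I would apply Lemma~\ref{var_bounnd_saga} once more, but now at iteration $k$ (with $\mb{r}^k=\mb{g}^k$), re-expressing that variance as yet another combination of $\mathbb{E}[\|\mb{x}^k-W_\infty\mb{x}^k\|^2]$, $\mathbb{E}[n\|\ol{\mb{x}}^k-\mb{x}^*\|^2]$, and $\mathbb{E}[t^k]$.

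Finally, I would collect all contributions and simplify. Here I would use the step-size hypothesis $\alpha \leq \tfrac{1}{4\sqrt{2}L}$, which implies $L^2\alpha^2 \leq \tfrac{1}{32}$, to convert the $L^4\alpha^2$ coefficients arising from the optimality-gap recursion and from the re-application of Lemma~\ref{var_bounnd_saga} into clean multiples of $L^2$; the crude bounds $m \geq 1$ and $n \geq 1$ absorb the $\tfrac{2}{m}$ and $\tfrac{1}{n}$ factors appearing in Lemma~\ref{t_contract} and in the variance substitution. Tracking the arithmetic: the $\mathbb{E}[\|\mb{x}^k-W_\infty\mb{x}^k\|^2]$ coefficient accumulates $8L^2 + 8L^4\alpha^2 + \tfrac{4L^2}{m} + \tfrac{16L^4\alpha^2}{n} \leq 12.75L^2$, the $\mathbb{E}[n\|\ol{\mb{x}}^k-\mb{x}^*\|^2]$ coefficient accumulates $8L^2 + \tfrac{4L^2}{m} + \tfrac{16L^4\alpha^2}{n} \leq 12.5L^2$, the $\mathbb{E}[\|\mb{y}^k-W_\infty\mb{y}^k\|^2]$ coefficient is $8L^2\alpha^2$, and the $\mathbb{E}[t^k]$ coefficient is $2L^2 + \tfrac{8L^4\alpha^2}{n} \leq 2.25L^2$.

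The main obstacle is not conceptual but rather the careful constant bookkeeping in step four: each of the three intermediate bounds feeds strictly positive contributions into every term of the final inequality, and the step-size constraint must be applied in just the right places so that the composite $L^4\alpha^2$ coefficients (arising both from~\eqref{opt_2} and from the nested use of Lemma~\ref{var_bounnd_saga}) collapse exactly to the advertised constants $12.75$, $12.5$, and $2.25$ without leaving residual dependencies on $m$ or $n$.
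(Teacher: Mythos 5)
Your proposal is correct and follows essentially the same route as the paper: apply Lemma~\ref{var_bounnd_saga} at iteration $k+1$, substitute~\eqref{consensus2},~\eqref{opt_2}, and Lemma~\ref{t_contract}, and absorb the residual $\tfrac{\alpha^2}{n}\mathbb{E}[\|\mb{g}^k-\nabla\mb{f}(\mb{x}^k)\|^2]$ term via a second application of Lemma~\ref{var_bounnd_saga} at iteration $k$, using $L^2\alpha^2\leq\tfrac{1}{32}$, $m\geq1$, and $n\geq1$. Your constant bookkeeping reproduces the paper's coefficients exactly.
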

\begin{proof}
Following directly from Lemma~\ref{var_bounnd_saga}, we have:~$\forall k\geq0$,
\begin{align}
&\mathbb{E}\left[\left\|\mb{g}^{k+1}-\nabla\mb{f}(\mb{x}^{k+1})\right\|^2\right]\nonumber\\
\leq&~4L^2\mathbb{E}\left[\left\|\mb{x}^{k+1}-W_\infty\mb{x}^{k+1}\right\|^2\right] + 4L^2\mathbb{E}\left[n\left\|\ol{\mb{x}}^{k+1}-\mb{x}^*\right\|^2\right] \nonumber\\
&+ 2L^2 \mathbb{E}\left[t^{k+1}\right]. \nonumber
\end{align}
Using~\eqref{consensus2},~\eqref{opt_2} and Lemma~\ref{t_contract} in the inequality above leads to the following: if~$0<\alpha\leq\frac{1}{4\sqrt{2}L}$,
\begin{align}
&\mathbb{E}\left[\left\|\mb{g}^{k+1}-\nabla\mb{f}(\mb{x}^{k+1})\right\|^2\right]\nonumber\\
\leq&~12.25L^2\mathbb{E}\left[\left\|\mb{x}^k-W_\infty\mb{x}^k\right\|^2\right]
+ 12L^2\mathbb{E}\left[n\|\ol{\mb{x}}^k-\mb{x}^*\|^2\right] \nonumber\\
&+ 8L^2\alpha^2\mathbb{E}\left[\left\|\mb{y}^k-W_\infty\mb{y}^k\right\|^2\right] \nonumber\\
&+ 2L^2\mathbb{E}\left[t^{k}\right] +0.125\mathbb{E}\left[\left\|\mb{g}^k-\nabla\mb{f}(\mb{x}^k)\right\|^2\right].
\nonumber
\end{align}
The proof follows by applying Lemma~\ref{var_bounnd_saga} in the above. 
\end{proof}
\vspace{-1cm}
\subsection{Main results for~\textbf{\texttt{GT-SAGA}}} With the bounds on the gradient variance for \textbf{\texttt{GT-SAGA}} derived in the previous subsection, we are now able to refine the inequalities obtained for the general dynamical system~\eqref{alg_a}-\eqref{alg_b} in Section~\ref{general} and derive the explicit convergence rates for~\textbf{\texttt{GT-SAGA}}. First, we apply the upper bound on~$\mathbb{E}[\|\mb{g}^k-\nabla\mb{f}(\mb{x}^k)\|^2]$ in Lemma~\ref{var_bounnd_saga} to~\eqref{opt_1} to obtain:~$\forall k\geq0$,
\begin{align}
&\mathbb{E}\left[n\left\|\ol{\mb x}^{k+1}-\mb{x}^*\right\|^2\right] \nonumber\\ 
\leq&~L^2\alpha\left(\frac{1}{\mu}+\frac{4\alpha}{n}\right)\mathbb{E}\left[\left\|\mb{x}^k-W_\infty\mb{x}^k\right\|^2\right] \nonumber\\
&+ \left(1-\mu\alpha + \frac{4L^2\alpha^2}{n}\right)\mathbb{E}\left[n\|\ol{\mb{x}}^k-\mb{x}^*\|^2\right] +\frac{2L^2\alpha^2}{n}\mathbb{E}\left[t^k\right]. \nonumber 
\end{align}
If~$0<\alpha\leq\frac{1}{4\mu}$, then~$\frac{1}{\mu}+\frac{4\alpha}{n}\leq\frac{2}{\mu}$; if~$0<\alpha\leq\frac{\mu n}{8L^2}$, then we have~$1-\mu\alpha + \frac{4L^2\alpha^2}{n}\leq1-\frac{\mu\alpha}{2}$. Therefore, if~$0<\alpha\leq\frac{\mu}{8L^2}$, we have the following:~$\forall k\geq0$, 
\begin{align}\label{saga2}
&\mathbb{E}\left[n\left\|\ol{\mb x}^{k+1}-\mb{x}^*\right\|^2\right] \nonumber\\
\leq&~ 
\frac{2L^2\alpha}{\mu}\mathbb{E}\left[\left\|\mb{x}^k-W_\infty\mb{x}^k\right\|^2\right]
+ \left(1-\frac{\mu\alpha}{2}\right)\mathbb{E}\left[n\|\ol{\mb{x}}^k-\mb{x}^*\|^2\right] \nonumber\\
&+ \frac{2L^2\alpha^2}{n}\mathbb{E}\left[t^k\right]
\end{align}
Second, we apply the upper bounds on~$\mathbb{E}[\|\mb{g}^k-\nabla\mb{f}(\mb{x}^k)\|^2]$ and~$\mathbb{E}[\|\mb{g}^{k+1}-\nabla\mb{f}(\mb{x}^{k+1})\|^2]$ in Lemma~\ref{var_bounnd_saga} and Corollary~\ref{var_bound_saga_2} to Lemma~\ref{general_3} to obtain the following:~$\forall k\geq0$,  
\begin{align}\label{saga4}
&\mathbb{E}\left[\left\|\mb{y}^{k+1}-W_\infty\mb{y}^{k+1}\right\|^2\right]\nonumber\\
\leq&~\frac{104L^2}{1-\sigma^2}\mathbb{E}\left[\left\|\mb{x}^k-W_\infty\mb{x}^k\right\|^2\right]
+ \frac{71L^2}{1-\sigma^2}\mathbb{E}\left[n\left\|\ol{\mb{x}}^k-\mb{x}^*\right\|^2\right]\nonumber\\
&+ \frac{19L^2}{1-\sigma^2}\mathbb{E}\left[t^k\right]+\frac{3+\sigma^2}{4}\mathbb{E}\left[\left\|\mb{y}^k-W_\infty\mb{y}^k\right\|^2\right],
\end{align}
if~$0<\alpha\leq\frac{1-\sigma^2}{16L}$.
To proceed, we write~\eqref{consensus1},~\eqref{saga2}, Lemma~\ref{t_contract} and~\eqref{saga4} jointly as a linear matrix inequality.
\begin{proposition}\label{prop_gtsaga}
Let Assumptions~\ref{sc},~\ref{smooth},~\ref{connect} hold and consider the iterates~$\{\mb{x}^k\},\{\mb{y}^k\},\{t^k\}$ generated by \textbf{\texttt{GT-SAGA}}. If the step-size~$\alpha$ follows~$0<\alpha\leq\frac{\mu(1-\sigma)}{16L^2}$, we have:~$\forall k\geq1$,
\begin{align}
\mb{u}^{k+1} \leq G_\alpha\mb{u}^{k}, 
\end{align}
where~$\mb{u}^k\in\mathbb{R}^4$ and~$G_\alpha\in\mathbb{R}^{4\times4}$ are defined as follows:
{\small\begin{align}
	\mb{u}^k&=
	\begin{bmatrix}
	\mathbb{E}\left[\left\|\mb{x}^{k}-W_\infty\mb{x}^{k}\right\|^2\right] \\[1ex]
	\mathbb{E}\left[n\left\|\ol{\mb x}^{k}-\mb{x}^*\right\|^2\right] \\[1ex]
	\mathbb{E}\left[t^{k}\right] \\[1ex]
	\mathbb{E}\left[L^{-2}\left\|\mb{y}^{k} - W_\infty\mb{y}^{k}\right\|^2\right]
	\end{bmatrix},\quad\nonumber\\
	G_\alpha&=
	\begin{bmatrix}
	\dfrac{1+\sigma^2}{2} & 0 & 0 &\dfrac{2\alpha^2L^2}{1-\sigma^2} \\[2ex]
	\dfrac{2L^2\alpha}{\mu}&  1-\dfrac{\mu\alpha}{2} & \dfrac{2L^2\alpha^2}{n} & 0\\[2ex]
	\dfrac{2}{m} & \dfrac{2}{m} & 1-\dfrac{1}{M} &0\\[2ex]
	\dfrac{104}{1-\sigma^2} & \dfrac{71}{1-\sigma^2} & \dfrac{19}{1-\sigma^2}  & \dfrac{3+\sigma^2}{4}
	\end{bmatrix}.\nonumber
\end{align}}
\end{proposition}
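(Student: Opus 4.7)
The plan is to recognize that Proposition~\ref{prop_gtsaga} is essentially a bookkeeping exercise: the four scalar component inequalities of the matrix recursion have already been derived individually in the text preceding the proposition. Specifically, the first row of $G_\alpha$ comes from the consensus contraction~\eqref{consensus1} in Lemma~\ref{consensus}; the second row comes from~\eqref{saga2}; the third row is exactly the contraction for $t^k$ established in Lemma~\ref{t_contract}; and the fourth row follows from~\eqref{saga4} after dividing both sides by $L^2$, which is why the fourth entry of $\mathbf{u}^k$ is scaled by $L^{-2}$ (this scaling is what turns the bare $\frac{104L^2}{1-\sigma^2}, \frac{71L^2}{1-\sigma^2}, \frac{19L^2}{1-\sigma^2}$ coefficients on the first three components into $\frac{104}{1-\sigma^2}, \frac{71}{1-\sigma^2}, \frac{19}{1-\sigma^2}$ and preserves $\frac{3+\sigma^2}{4}$ on the fourth component).

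The substantive step is to verify that the single step-size condition $\alpha \leq \frac{\mu(1-\sigma)}{16L^2}$ simultaneously implies every prior constraint used in deriving those four inequalities. I would list each prerequisite and check it explicitly: Lemma~\ref{general_3} requires $\alpha \leq \frac{1}{4\sqrt{2}L}$; the derivation of~\eqref{saga4} further imposes $\alpha \leq \frac{1-\sigma^2}{16L}$; and the derivation of~\eqref{saga2} imposes $\alpha \leq \frac{\mu}{8L^2}$ (which in turn already subsumes $\alpha \leq \frac{1}{4\mu}$ and $\alpha \leq \frac{\mu n}{8L^2}$ used there). Since $\mu \leq L$ and $1-\sigma \leq 1+\sigma$, one gets the chain $\frac{\mu(1-\sigma)}{16L^2} \leq \frac{\mu}{16L^2} \leq \frac{\mu}{8L^2}$ and $\frac{\mu(1-\sigma)}{16L^2} \leq \frac{1-\sigma}{16L} \leq \frac{1-\sigma^2}{16L} \leq \frac{1}{4\sqrt{2}L}$, so the stated step-size bound is the binding one and all preceding bounds follow.

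Once the step-size admissibility is confirmed, I would simply collect the four inequalities \eqref{consensus1}, \eqref{saga2}, Lemma~\ref{t_contract}, and \eqref{saga4} into the componentwise matrix form $\mathbf{u}^{k+1} \leq G_\alpha \mathbf{u}^k$ by reading off each coefficient and placing it in the correct $(i,j)$ entry of $G_\alpha$; the only nontrivial aesthetic point is to track the $L^{-2}$ weighting on the fourth coordinate of $\mathbf{u}^k$ consistently across rows (it multiplies the $(1,4)$ entry by $L^2$ to give $\frac{2\alpha^2 L^2}{1-\sigma^2}$ and divides the fourth row's first three entries by $L^2$).

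There is no significant obstacle here beyond clerical care: the proposition is a restatement/packaging lemma whose content was already proven in the auxiliary results of Section~\ref{general} and the variance-bound corollary in this section. The only place where a reader could stumble is the scaling convention on the fourth coordinate, so I would present the matrix assembly row by row with the scaling written out, rather than as a single opaque display.
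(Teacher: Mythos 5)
Your proposal is correct and matches the paper exactly: the proposition is stated there without a separate proof, introduced by the sentence that it writes~\eqref{consensus1},~\eqref{saga2}, Lemma~\ref{t_contract}, and~\eqref{saga4} jointly as a linear matrix inequality, which is precisely your assembly. Your explicit verification that $\alpha\leq\frac{\mu(1-\sigma)}{16L^2}$ subsumes the prerequisites $\alpha\leq\frac{\mu}{8L^2}$, $\alpha\leq\frac{1-\sigma^2}{16L}$, and $\alpha\leq\frac{1}{4\sqrt{2}L}$, and your handling of the $L^{-2}$ scaling on the fourth coordinate, are both accurate and slightly more careful than what the paper spells out.
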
Clearly, to show the linear convergence of~\textbf{\texttt{GT-SAGA}}, it suffices to derive the range of~$\alpha$ such that~$\rho(G_\alpha)<1$. To do this, we present a useful lemma from~\cite{matrix_analysis}. 
\begin{lem}\label{rho_bound}
		Let~$A\in\mathbb{R}^{d\times d}$ be non-negative and~$\mb{x}\in\mathbb{R}^d$ be positive. If~$A\mb{x}\leq\beta\mb{x}$ for~$\beta>0$, then~$\rho(A)\leq\mn{A}_{\infty}^{\mb{x}}\leq\beta.$
\end{lem}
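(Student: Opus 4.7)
The plan is to decompose the claim into two standard inequalities and prove each in turn: first $\rho(A) \leq \mn{A}_\infty^{\mb{x}}$, which is the generic fact that the spectral radius is bounded by any induced matrix norm, and second $\mn{A}_\infty^{\mb{x}} \leq \beta$, which is where the specific structure (non-negativity of $A$, positivity of $\mb{x}$, and the inequality $A\mb{x} \leq \beta \mb{x}$) enters.

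For the first inequality, I would take an arbitrary (possibly complex) eigenvalue $\lambda$ of $A$ with eigenvector $\mb{v} \neq \mb{0}$ and extend the weighted infinity norm $\|\cdot\|_\infty^{\mb{x}}$ to complex vectors by $\|\mb{v}\|_\infty^{\mb{x}} = \max_i |v_i|/x_i$. Then $|\lambda| \, \|\mb{v}\|_\infty^{\mb{x}} = \|A\mb{v}\|_\infty^{\mb{x}} \leq \mn{A}_\infty^{\mb{x}} \, \|\mb{v}\|_\infty^{\mb{x}}$, so $|\lambda| \leq \mn{A}_\infty^{\mb{x}}$, and taking the maximum over $\lambda$ yields $\rho(A) \leq \mn{A}_\infty^{\mb{x}}$.

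For the second inequality, I would exploit the explicit form of the induced norm for non-negative matrices. Take an arbitrary $\mb{y} \in \mathbb{R}^d$ with $\|\mb{y}\|_\infty^{\mb{x}} \leq 1$; by definition this means $|y_j| \leq x_j$ for every $j$. Because $A$ has non-negative entries, for every row $i$ we can estimate
\begin{equation*}
\frac{|[A\mb{y}]_i|}{x_i} \leq \frac{\sum_j A_{ij}|y_j|}{x_i} \leq \frac{\sum_j A_{ij} x_j}{x_i} = \frac{[A\mb{x}]_i}{x_i} \leq \beta,
\end{equation*}
where the last step uses the hypothesis $A\mb{x} \leq \beta \mb{x}$ componentwise (which is meaningful since $\mb{x} > 0$). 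Taking the maximum over $i$ and then the supremum over admissible $\mb{y}$ gives $\mn{A}_\infty^{\mb{x}} \leq \beta$, and combining this with the first inequality closes the proof.

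The argument is short and no step is technically deep; the main conceptual point to get right is the interplay between the three structural assumptions—non-negativity of $A$, positivity of $\mb{x}$, and componentwise $A\mb{x} \leq \beta\mb{x}$—which together allow the pointwise bound $|y_j| \leq x_j$ to be pushed through $A$ and compared directly against $\beta x_i$. If I had tried to work with an unweighted norm or without non-negativity of $A$, the absolute values inside the sum would not collapse so cleanly, so that is the subtle step to flag rather than a genuine obstacle.
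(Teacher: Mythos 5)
Your proof is correct. The paper itself does not prove this lemma---it is stated and cited directly from the matrix-analysis literature (Horn and Johnson)---so there is no in-paper argument to compare against; your two-step decomposition (spectral radius bounded by any induced norm, then the explicit row-wise computation $|[A\mb{y}]_i|/x_i \leq [A\mb{x}]_i/x_i \leq \beta$ using non-negativity of $A$ and positivity of $\mb{x}$) is exactly the standard textbook argument, and your remark about extending $\|\cdot\|_\infty^{\mb{x}}$ to complex vectors for the eigenvalue step is the right care to take. Indeed, for non-negative $A$ your second computation shows the sharper fact $\mn{A}_\infty^{\mb{x}} = \max_i [A\mb{x}]_i/x_i$, from which the bound by $\beta$ is immediate.
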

We are ready to prove Theorem~\ref{main_gtsaga} based on Proposition~\ref{prop_gtsaga}.

\begin{P1}
Recall from Proposition~\ref{prop_gtsaga} that if~$0<\alpha\leq\frac{\mu(1-\sigma)}{16L^2}$, we have~$\mb{u}^{k+1}\leq G_\alpha\mb{u}^{k}$. In the light of Lemma~\ref{rho_bound}, we solve for the range of the step-size~$\alpha$ and a positive vector~$\bds\epsilon = \left[\epsilon_1,\epsilon_2,\epsilon_3,\epsilon_4\right]^\top$ such that the following (entry-wise) linear matrix inequality holds:
\begin{align}\label{G}
G_\alpha\bds{\epsilon} \leq \left(1-\frac{\mu\alpha}{4}\right)\bds\epsilon,
\end{align}
which can be written equivalently in the following form:
\begin{align}
&\frac{\mu\alpha}{4} + \frac{2L^2}{1-\sigma^2}\frac{\epsilon_4}{\epsilon_1}\alpha^2 \leq\frac{1-\sigma^2}{2} 
\label{r1'}\\
&\frac{2L^2}{n}\epsilon_3\alpha\leq\frac{\mu}{4}\epsilon_2
- \frac{2L^2}{\mu}\epsilon_1
\label{r2'}\\
&\frac{\mu\alpha}{4}\leq\frac{1}{M} - \frac{2}{m}\frac{\epsilon_1}{\epsilon_3}
- \frac{2}{m}\frac{\epsilon_2}{\epsilon_3} \label{r3'}\\
&\frac{\mu\alpha}{4}\leq
\frac{1-\sigma^2}{4}
-\frac{104}{1-\sigma^2}\frac{\epsilon_1}{\epsilon_4}
- \frac{71}{1-\sigma^2}\frac{\epsilon_2}{\epsilon_4}
- \frac{19}{1-\sigma^2}\frac{\epsilon_3}{\epsilon_4} \label{r4'}
\end{align} 
Clearly, that~\eqref{r2'}--\eqref{r4'} hold for some feasible range of~$\alpha$ is equivalent to the RHS of~\eqref{r2'}--\eqref{r4'} being positive. Based on this observation, we will next fix the values of~$\epsilon_1,\epsilon_2,\epsilon_3,\epsilon_4$ that are independent of~$\alpha$. First, for the RHS of~\eqref{r2'} to be positive, we set~$\epsilon_1 = 1, \epsilon_2 = 8.5Q^2,$ where~$Q = L/\mu$.
Second, the RHS of~\eqref{r3'} being positive is equivalent to
\begin{align}
\epsilon_3 > \frac{2M}{m}\epsilon_1 + \frac{2M}{m}\epsilon_2
		   = \frac{2M}{m} + \frac{17MQ^2}{m}.
\end{align}
We therefore set~$\epsilon_3 = \frac{20MQ^2}{m}$. Third, we note that the RHS of~\eqref{r4'} being positive is equivalent to the following:
\begin{align}
\epsilon_4 >&~ \frac{4}{(1-\sigma^2)^2}\left(104\epsilon_1+71\epsilon_2+19\epsilon_3\right) \nonumber\\
=&~ \frac{4}{(1-\sigma^2)^2}\left(104+603.5Q^2+\frac{380MQ^2}{m}\right) \nonumber
\end{align}
Note that~$104+603.5Q^2+\frac{380MQ^2}{m}\leq\frac{1087.5MQ^2}{m}$. We therefore set~$\epsilon_4 = \frac{8700}{\left(1-\sigma^2\right)^2}\frac{MQ^2}{m}$. 

We now solve for the range of~$\alpha$ from~\eqref{r1'}--\eqref{r4'} given the previously fixed~$\epsilon_1,\epsilon_2,\epsilon_3,\epsilon_4$. From~\eqref{r2'}, we have that
\begin{align}
\alpha \leq \frac{n}{2L^2\epsilon_3}\left(\frac{\mu}{4}\epsilon_2-\frac{2L^2}{\mu}\epsilon_1\right)	
		= \frac{m}{M}\frac{n}{320QL} \label{a2}.
\end{align}
Moreover, it is straightforward to verify that if~$\alpha$ satisfies
\begin{align}\label{a1}
0<\alpha\leq\frac{m}{M}\frac{(1-\sigma^2)^2}{320QL}
\end{align}
then~\eqref{r1'} holds. Next, to make~\eqref{r3'} hold, it suffices to make~$\alpha$:
\begin{align}
\alpha\leq\frac{1}{5\mu M}. \label{a3}
\end{align}
Finally, to make~\eqref{r4'} hold, it suffices to make
\begin{align}
\alpha\leq \frac{1-\sigma^2}{2\mu} \label{a4}.
\end{align}   
To summarize, combining~\eqref{a1}--\eqref{a4}, we conclude that if the step-size~$\alpha$ satisfies
\begin{align}
0<\alpha\leq\ol{\alpha}:=\min\left\{\frac{1}{5\mu M},\frac{m}{320M}\frac{(1-\sigma^2)^2}{LQ}\right\},
\end{align}
then~\eqref{G} holds with some~$\bds\epsilon>0$ and thus~$\rho\left(G_\alpha\right)\leq1-\frac{\mu\alpha}{4}$ according to Lemma~\ref{rho_bound}. Furhter if~$\alpha = \ol{\alpha}$, we have 
\begin{align*}
\rho\left(G_\alpha\right)\leq1-\min\left\{\frac{1}{20 M},\frac{m}{1280M}\frac{(1-\sigma^2)^2}{Q^2}\right\},
\end{align*}
which completes the proof. 
\end{P1}

\section{Convergence analysis of~\textbf{\texttt{GT-SVRG}}}\label{sGTSVRG} 

In this section, we conduct the complexity analysis of \textbf{\texttt{GT-SVRG}} in Algorithm~\ref{GT-SVRG} based on the auxiliary results derived for the general dynamical system~\eqref{alg_a}-\eqref{alg_b} in Section~\ref{general}. Recall from Algorithm~\ref{GT-SVRG} that the gradient estimator~$\mb{v}_i^k$ at each node~$i$ in \textbf{\texttt{GT-SVRG}} is given by the following: $\forall k\geq1$, choose~$s_i^k$ uniformly at random in~$\{1,\cdots,m_i\}$ and
\begin{align}
\mb{v}_{i}^{k} = \nabla f_{i,s_i^{k}}\big(\mb{x}_{i}^{k}\big) - \nabla f_{i,s_i^{k}}\big(\bds\tau_i^{k}\big) + \nabla f_i\big(\bds\tau_i^{k}\big)
\end{align}
where~$\bds\tau_i^k = \mb{x}_i^k$ if~$\bmod(k,T) = 0$, where~$T$ is the length of each inner loop iterations of \textbf{\texttt{GT-SVRG}}; otherwise~$\bds\tau_i^k = \bds\tau_i^{k-1}$. To facilitate the convergence analysis, we define an auxiliary variable~$\ol{\bds\tau}^k \coloneqq  \frac{1}{n}\sum_{i=1}^{n}\bds\tau_i^k$,~$\forall k\geq0$.

\subsection{Bounding the variance of the gradient estimator}  
We first bound the variance of the gradient estimator~$\mb{v}_i^k$, following a similar procedure as the proof of Lemma~\ref{var_bounnd_saga}.  
\begin{lem}\label{var_bound_svrg}
Let Assumption~\ref{smooth} hold and consider the iterates~$\{\mb{v}^k\}$ generated by \textbf{\texttt{GT-SVRG}} in Algorithm~\ref{GT-SVRG}. The following inequality holds~$\forall k\geq0$:
\begin{align}
&\mathbb{E}\left[\left\|\mb{v}^k-\nabla \mb{f}(\mb{x}^k)\right\|^2\right] \nonumber\\
\leq&~4L^2\mathbb{E}\left[\left\|\mb{x}^k-W_\infty\mb{x}^k\right\|^2\right] + 4L^2\mathbb{E}\left[n\left\|\ol{\mb{x}}^k-\mb{x}^*\right\|^2\right] \nonumber\\
&+ 4L^2\mathbb{E}\left[\left\|\bds\tau^k-W_\infty\bds\tau^k\right\|^2\right] + 4L^2\mathbb{E}\left[n\left\|\ol{\bds\tau}^k-\mb{x}^*\right\|^2\right]. \nonumber
\end{align}
\end{lem}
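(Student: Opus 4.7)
The plan is to mirror the proof of Lemma~\ref{var_bounnd_saga} almost verbatim, exploiting the structural similarity between the \SAGA~and \SVRG~estimators: in both cases $\mb{v}_i^k$ (resp.~$\mb{g}_i^k$) is an unbiased estimator of $\nabla f_i(\mb{x}_i^k)$ given $\mc F^k$, and the ``control variate'' part of $\mb{v}_i^k$ is~$-\nabla f_{i,s_i^k}(\bds\tau_i^k)+\nabla f_i(\bds\tau_i^k)$, which plays exactly the role that $-\nabla f_{i,s_i^k}(\mb{z}_{i,s_i^k}^k)+\tfrac{1}{m_i}\sum_j\nabla f_{i,j}(\mb{z}_{i,j}^k)$ plays in \GTSAGA, with the single reference point $\bds\tau_i^k$ replacing the table entries $\{\mb{z}_{i,j}^k\}_j$.

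First, I would apply the conditional variance decomposition~\eqref{VRD} with $\mb{a}_i^k = \nabla f_{i,s_i^k}(\mb{x}_i^k)-\nabla f_{i,s_i^k}(\bds\tau_i^k)$, observing that $\mathbb{E}[\mb{a}_i^k\,|\,\mc F^k] = \nabla f_i(\mb{x}_i^k)-\nabla f_i(\bds\tau_i^k)$, so that
\begin{align*}
\mathbb{E}\!\left[\|\mb{v}_i^k-\nabla f_i(\mb{x}_i^k)\|^2\,\big|\,\mc F^k\right]
\leq \tfrac{1}{m_i}\textstyle\sum_{j=1}^{m_i}\!\|\nabla f_{i,j}(\mb{x}_i^k)-\nabla f_{i,j}(\bds\tau_i^k)\|^2,
\end{align*}
since $s_i^k$ is uniform on $\{1,\ldots,m_i\}$ and independent of $\mc F^k$.

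Next I would insert the minimizer $\mb{x}^*$ via $\|a+b\|^2\leq 2\|a\|^2+2\|b\|^2$ inside each summand, then invoke $L$-smoothness of every $f_{i,j}$ (Assumption~\ref{smooth}) to get
\begin{align*}
\mathbb{E}\!\left[\|\mb{v}_i^k-\nabla f_i(\mb{x}_i^k)\|^2\,\big|\,\mc F^k\right]
\leq 2L^2\|\mb{x}_i^k-\mb{x}^*\|^2 + 2L^2\|\bds\tau_i^k-\mb{x}^*\|^2.
\end{align*}
Then I would split each of the two terms through the network averages using $\|\mb{x}_i^k-\mb{x}^*\|^2\leq 2\|\mb{x}_i^k-\ol{\mb{x}}^k\|^2+2\|\ol{\mb{x}}^k-\mb{x}^*\|^2$ and the analogous inequality for $\bds\tau_i^k$, sum the resulting bound over $i\in\{1,\ldots,n\}$, and take total expectations. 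Recognizing that $\sum_i\|\mb{x}_i^k-\ol{\mb{x}}^k\|^2 = \|\mb{x}^k-W_\infty\mb{x}^k\|^2$ and $\sum_i\|\ol{\mb{x}}^k-\mb{x}^*\|^2 = n\|\ol{\mb{x}}^k-\mb{x}^*\|^2$ (and similarly for $\bds\tau^k$) yields the claimed four-term bound with the constant~$4L^2$.

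No step here is a genuine obstacle: the conditional variance decomposition and $L$-smoothness are both applied in exactly the same way as in Lemma~\ref{var_bounnd_saga}. The only minor bookkeeping point is that the \SVRG~reference $\bds\tau_i^k$ is $\mc F^k$-measurable (its update depends on $k$ but not on $s_i^k$), so it behaves as a deterministic quantity inside the conditional expectation, which is what allows the clean split into a $\mb{x}$-part and a $\bds\tau$-part without any cross term surviving.
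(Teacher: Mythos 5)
Your proposal is correct and follows essentially the same route as the paper's proof: the conditional variance decomposition~\eqref{VRD} applied to $\nabla f_{i,s_i^k}(\mb{x}_i^k)-\nabla f_{i,s_i^k}(\bds\tau_i^k)$, insertion of $\mb{x}^*$ with Young's inequality and $L$-smoothness, the split through the network averages, and summation over $i$. Your remark about the $\mc F^k$-measurability of $\bds\tau_i^k$ is exactly the point that justifies the variance decomposition step.
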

\begin{proof}
We recall from~Algorithm~\ref{GT-SVRG} the definition of each local gradient estimator~$\mb{v}_i^k$ in \textbf{\texttt{GT-SVRG}} and proceed as follows.
\begin{align}
&\mathbb{E}\left[\left\|\mb{v}_i^k-\nabla f_i(\mb{x}_i^k)\right\|^2|\mc{F}^k\right] \nonumber\\
=&~\mathbb{E}\Big[\Big\|\nabla f_{i,s_i^{k}}\big(\mb{x}_{i}^{k}\big) - \nabla f_{i,s_i^{k}}\big(\bds\tau_i^{k}\big) \nonumber\\
&\qquad\qquad\quad-\left(\nabla f_i(\mb{x}_i^k) - \nabla f_{i}\big(\bds\tau_i^{k}\big)\right)\Big\|^2 \Big|\mc{F}^k\Big] \nonumber\\
\leq&~\mathbb{E}\left[\left\|\nabla f_{i,s_i^{k}}\big(\mb{x}_{i}^{k}\big) - \nabla f_{i,s_i^{k}}\big(\bds\tau_i^{k}\big)\right\|^2\Big|\mc{F}^k\right] \nonumber\\
=&~\textstyle{\frac{1}{m_i}\sum_{j=1}^{m_i}}\Big\|\left(\nabla f_{i,j}\big(\mb{x}_{i}^{k}\big) -\nabla f_{i,j}(\mb{x}^*)\right) \nonumber\\
&\qquad\qquad\qquad+ \left(\nabla f_{i,j}(\mb{x}^*)- \nabla f_{i,j}\big(\bds\tau_i^{k}\big)\right) \Big\|^2 \nonumber\\
\leq&~2L^2\left\|\mb{x}_i^k-\mb{x}^*\right\|^2 + 2L^2\left\|\bds\tau_i^{k}-\mb{x}^*\right\|^2  \nonumber\\
\leq&~4L^2\left\|\mb{x}_i^k-\ol{\mb{x}}^k\right\|^2 + 4L^2\left\|\ol{\mb{x}}^k-\mb{x}^*\right\|^2 \nonumber\\
&+ 4L^2\left\|\bds\tau_i^{k}-\ol{\bds\tau}^{k}\right\|^2 + 4L^2\left\|\ol{\bds\tau}^{k}-\mb{x}^*\right\|^2, \label{var_bound_svrg_i}
\end{align}
where in the second inequality we used the standard conditional variance decomposition in~\eqref{VRD}.
The proof follows by summing~\eqref{var_bound_svrg_i} over~$i$ and taking the total expectation.
\end{proof}
Lemma~\ref{var_bound_svrg} shows that as~$\mb{x}^k$ and~$\bds\tau^k$ progressively approach the optimal solution~$\mb{x}^*$ of the Problem P1, the variance of the gradient estimator~$\mb{v}^k$ goes to zero.
We then immediately have the following corollary. 
\begin{corollary}\label{var_bound_svrg_2}
Let Assumption~\ref{smooth} hold and consider the iterates~$\{\mb{v}^k\}$ generated by \textbf{\texttt{GT-SVRG}}. If~$0<\alpha\leq\frac{1}{8L}$, then the following inequality holds~$\forall k\geq0$:
\begin{align}
&\mathbb{E}\left[\left\|\mb{v}^{k+1}-\nabla \mb{f}(\mb{x}^{k+1})\right\|^2\right] \nonumber\\
\leq&~16.75L^2\mathbb{E}\left[\left\|\mb{x}^k-W_\infty\mb{x}^k\right\|^2\right] \nonumber\\
&+ 16L^2\alpha^2\mathbb{E}\left[\left\|\mb{y}^k-W_\infty\mb{y}^k\right\|^2\right] 
+ 16.5L^2\mathbb{E}\left[n\|\ol{\mb{x}}^k-\mb{x}^*\|^2\right] \nonumber\\
&+ 4.5L^2\mathbb{E}\left[\left\|\bds\tau^{k}-W_\infty\bds\tau^{k}\right\|^2\right]  + 4.5L^2\mathbb{E}\left[n\left\|\ol{\bds\tau}^{k}-\mb{x}^*\right\|^2\right].   \nonumber
\end{align}
\end{corollary}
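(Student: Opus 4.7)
The plan is to mimic the structure used in Corollary~\ref{var_bound_saga_2} for~\textbf{\texttt{GT-SAGA}}, with the main difference being how the auxiliary variable~$\bds\tau^{k+1}$ is handled. First, I would apply Lemma~\ref{var_bound_svrg} at iteration~$k+1$, yielding an upper bound for~$\mathbb{E}[\|\mb{v}^{k+1}-\nabla\mb{f}(\mb{x}^{k+1})\|^2]$ as a sum of four terms, namely $4L^2\mathbb{E}[\|\mb{x}^{k+1}-W_\infty\mb{x}^{k+1}\|^2]$, $4L^2\mathbb{E}[n\|\ol{\mb{x}}^{k+1}-\mb{x}^*\|^2]$, $4L^2\mathbb{E}[\|\bds\tau^{k+1}-W_\infty\bds\tau^{k+1}\|^2]$, and $4L^2\mathbb{E}[n\|\ol{\bds\tau}^{k+1}-\mb{x}^*\|^2]$. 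The goal is then to push each of these forward-in-time quantities back to quantities at iteration~$k$.

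For the~$\mb{x}^{k+1}$-related terms, I would substitute~\eqref{consensus2} and~\eqref{opt_2} directly, noting that~$\mb{r}^k = \mb{v}^k$ in~\textbf{\texttt{GT-SVRG}}; the latter introduces a residual~$\frac{\alpha^2}{n}\mathbb{E}[\|\mb{v}^k-\nabla\mb{f}(\mb{x}^k)\|^2]$ that will be closed later. For the~$\bds\tau^{k+1}$-related terms, the key observation is that, by the update rule in Algorithm~\ref{GT-SVRG},~$\bds\tau^{k+1}$ equals either~$\bds\tau^k$ or~$\mb{x}^{k+1}$, determined deterministically by whether~$\bmod(k+1,T)=0$. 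In either case, the trivial pointwise bounds
\begin{align*}
\|\bds\tau^{k+1}-W_\infty\bds\tau^{k+1}\|^2 &\leq \|\bds\tau^k-W_\infty\bds\tau^k\|^2 + \|\mb{x}^{k+1}-W_\infty\mb{x}^{k+1}\|^2,\\
n\|\ol{\bds\tau}^{k+1}-\mb{x}^*\|^2 &\leq n\|\ol{\bds\tau}^k-\mb{x}^*\|^2 + n\|\ol{\mb{x}}^{k+1}-\mb{x}^*\|^2
\end{align*}
hold, and the~$\mb{x}^{k+1}$-parts on the right-hand sides can again be expanded using~\eqref{consensus2} and~\eqref{opt_2}.

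Collecting the four contributions and invoking the step-size restriction~$\alpha\leq\tfrac{1}{8L}$ to bound cross-coefficients of the form~$L^4\alpha^2$ by~$L^2/8$, one arrives at an intermediate inequality with the target coefficients on~$\mathbb{E}[\|\mb{x}^k-W_\infty\mb{x}^k\|^2]$,~$\mathbb{E}[\|\mb{y}^k-W_\infty\mb{y}^k\|^2]$,~$\mathbb{E}[n\|\ol{\mb{x}}^k-\mb{x}^*\|^2]$,~$\mathbb{E}[\|\bds\tau^k-W_\infty\bds\tau^k\|^2]$,~$\mathbb{E}[n\|\ol{\bds\tau}^k-\mb{x}^*\|^2]$, plus a leftover term~$\frac{8L^2\alpha^2}{n}\mathbb{E}[\|\mb{v}^k-\nabla\mb{f}(\mb{x}^k)\|^2]$ (two copies of~$\frac{4L^2\alpha^2}{n}$ coming from the two applications of~\eqref{opt_2}). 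Since~$\alpha\leq\tfrac{1}{8L}$ makes~$\frac{8L^2\alpha^2}{n}\leq\tfrac{1}{8}$, a single further application of Lemma~\ref{var_bound_svrg} at iteration~$k$ reabsorbs this leftover back into the same five iterate quantities, and the coefficients~$16.75L^2$,~$16L^2\alpha^2$,~$16.5L^2$,~$4.5L^2$,~$4.5L^2$ emerge exactly.

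The only new idea beyond the~\textbf{\texttt{GT-SAGA}} analogue is the treatment of~$\bds\tau^{k+1}$; since this reset is deterministic in~$k$, no probabilistic contraction of the form of Lemma~\ref{t_contract} is needed, and the simple union-style bounds above suffice. The main obstacle is therefore not conceptual but arithmetic: tracking five coefficients through two rounds of substitution and verifying that the step-size condition~$\alpha\leq\tfrac{1}{8L}$ is precisely the threshold that makes all~$L^4\alpha^2$ and~$L^2\alpha^2/n$ cross-terms collapse into the small fractional constants needed to hit the stated numbers.
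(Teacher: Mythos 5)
Your proposal is correct and follows essentially the same route as the paper: apply Lemma~\ref{var_bound_svrg} at iteration~$k+1$, bound the~$\bds\tau^{k+1}$ terms by the sum of the~$\bds\tau^k$ and~$\mb{x}^{k+1}$ terms using the deterministic two-case update rule, substitute~\eqref{consensus2} and~\eqref{opt_2}, and reabsorb the leftover~$\tfrac{8L^2\alpha^2}{n}\mathbb{E}[\|\mb{v}^k-\nabla\mb{f}(\mb{x}^k)\|^2]$ via a second application of Lemma~\ref{var_bound_svrg} at iteration~$k$. The coefficient bookkeeping you describe ($16L^2+0.25L^2+0.5L^2=16.75L^2$, etc.) checks out under~$\alpha\leq\tfrac{1}{8L}$.
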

\begin{proof}
From Lemma~\ref{var_bound_svrg}, we have:~$\forall k\geq0$,
\begin{align}
&\mathbb{E}\left[\left\|\mb{v}^{k+1}-\nabla \mb{f}(\mb{x}^{k+1})\right\|^2\right]\nonumber\\ 
\leq&~4L^2\mathbb{E}\left[\left\|\mb{x}^{k+1}-W_\infty\mb{x}^{k+1}\right\|^2\right] + 4L^2\mathbb{E}\left[n\left\|\ol{\mb{x}}^{k+1}-\mb{x}^*\right\|^2\right] \nonumber\\
&+ 4L^2\mathbb{E}\left[\left\|\bds\tau^{k+1}-W_\infty\bds\tau^{k+1}\right\|^2\right] 
\nonumber\\
&+ 4L^2\mathbb{E}\left[n\left\|\ol{\bds\tau}^{k+1}-\mb{x}^*\right\|^2\right]. \label{var_bound_svrg_k+1_0}
\end{align}
Recall that~$\bds\tau^{k+1} = \mb{x}^{k+1}$ if~$\bmod(k+1,T)=0$; otherwise, $\bds\tau^{k+1} = \bds\tau^{k}$. We first derive upper bounds on the last two terms in~\eqref{var_bound_svrg_k+1_0} for these two cases seperately. On the one hand, if~$\bmod(k+1,T)\neq0$, we have that
\begin{align}
&4L^2\mathbb{E}\left[\left\|\bds\tau^{k+1}-W_\infty\bds\tau^{k+1}\right\|^2\right] + 4L^2\mathbb{E}\left[n\left\|\ol{\bds\tau}^{k+1}-\mb{x}^*\right\|^2\right]\nonumber\\
=&~4L^2\mathbb{E}\left[\left\|\bds\tau^{k}-W_\infty\bds\tau^{k}\right\|^2\right] + 4L^2\mathbb{E}\left[n\left\|\ol{\bds\tau}^{k}-\mb{x}^*\right\|^2\right]. \label{mod1}
\end{align}
On the other hand, if~$\bmod(k+1,T) = 0$, we have that
\begin{align}
&4L^2\mathbb{E}\left[\left\|\bds\tau^{k+1}-W_\infty\bds\tau^{k+1}\right\|^2\right] + 4L^2\mathbb{E}\left[n\left\|\ol{\bds\tau}^{k+1}-\mb{x}^*\right\|^2\right]\nonumber\\
=&~4L^2\mathbb{E}\left[\left\|\mb{x}^{k+1}-W_\infty\mb{x}^{k+1}\right\|^2\right] \nonumber\\
&\qquad+ 4L^2\mathbb{E}\left[n\left\|\ol{\mb{x}}^{k+1}-\mb{x}^*\right\|^2\right]. \label{mod2}
\end{align}
Therefore, combining~\eqref{mod1} and~\eqref{mod2}, we have that~$\forall k\geq0$:
{\small\begin{align}
&4L^2\mathbb{E}\left[\left\|\bds\tau^{k+1}-W_\infty\bds\tau^{k+1}\right\|^2\right] + 4L^2\mathbb{E}\left[n\left\|\ol{\bds\tau}^{k+1}-\mb{x}^*\right\|^2\right]\nonumber\\
\leq&~
4L^2\mathbb{E}\left[\left\|\mb{x}^{k+1}-W_\infty\mb{x}^{k+1}\right\|^2\right] + 4L^2\mathbb{E}\left[n\left\|\ol{\mb{x}}^{k+1}-\mb{x}^*\right\|^2\right] \nonumber\\
&~+ 4L^2\mathbb{E}\left[\left\|\bds\tau^{k}-W_\infty\bds\tau^{k}\right\|^2\right] + 4L^2\mathbb{E}\left[n\left\|\ol{\bds\tau}^{k}-\mb{x}^*\right\|^2\right] \label{mod}
\end{align}}Next, we apply~\eqref{mod} in~\eqref{var_bound_svrg_k+1_0} to obtain
{\small\begin{align}
&\mathbb{E}\left[\left\|\mb{v}^{k+1}-\nabla \mb{f}(\mb{x}^{k+1})\right\|^2\right]\nonumber\\ 
\leq&~8L^2\mathbb{E}\left[\left\|\mb{x}^{k+1}-W_\infty\mb{x}^{k+1}\right\|^2\right] + 8L^2\mathbb{E}\left[n\left\|\ol{\mb{x}}^{k+1}-\mb{x}^*\right\|^2\right] \nonumber\\
&+ 4L^2\mathbb{E}\left[\left\|\bds\tau^{k}-W_\infty\bds\tau^{k}\right\|^2\right] + 4L^2\mathbb{E}\left[n\left\|\ol{\bds\tau}^{k}-\mb{x}^*\right\|^2\right]. \label{var_bound_svrg_k+1_1}
\end{align}}The proof follows by using~\eqref{consensus2},~\eqref{opt_2} as well as Lemma~\ref{var_bound_svrg} in~\eqref{var_bound_svrg_k+1_1} and by simplifying the resulting inequality.  
\end{proof}

\subsection{Main results for~\textbf{\texttt{GT-SVRG}}} We now apply the upper bounds on the variance of the gradient estimator~$\mb{v}^k$ in \textbf{\texttt{GT-SVRG}} obtained in the previous subsection to refine the inequalities derived for the general dynamical system~\eqref{alg_a}-\eqref{alg_b} in Section~\ref{general} and establish the explicit complexity for~\textbf{\texttt{GT-SVRG}}. We first apply the upper bound on~$\mathbb{E}[\|\mb{v}^k-\nabla\mb{f}(\mb{x}^k)\|^2]$ in Lemma~\ref{var_bound_svrg} to~\eqref{opt_2} to obtain~$\forall k\geq0$:
\begin{align}
&\mathbb{E}\left[n\left\|\ol{\mb x}^{k+1}-\mb{x}^*\right\|^2\right] \nonumber\\
\leq&~L^2\alpha\left(\frac{1}{\mu}+\frac{4\alpha}{n}\right)\mathbb{E}\left[\left\|\mb{x}^k-W_\infty\mb{x}^k\right\|^2\right] \nonumber\\
&+ \left(1-\mu\alpha + \frac{4L^2}{n}\alpha^2 \right)\mathbb{E}\left[n\left\|\ol{\mb{x}}^k-\mb{x}^*\right\|^2\right] \nonumber\\
&+ \frac{4L^2\alpha^2}{n}\mathbb{E}\left[\left\|\bds\tau^k-W_\infty\bds\tau^k\right\|^2\right]  \nonumber\\
&+ \frac{4L^2\alpha^2}{n}\mathbb{E}\left[n\left\|\ol{\bds\tau}^k-\mb{x}^*\right\|^2\right].
\end{align}
If~$0<\alpha\leq\frac{1}{4\mu}$, we have~$(\frac{1}{\mu}+\frac{4\alpha}{n})\leq\frac{2}{\mu}$; if~$0<\alpha\leq\frac{n\mu}{8L^2}$, we have~$1-\mu\alpha + \frac{4L^2}{n}\alpha^2\leq1-\frac{\mu\alpha}{2}$. Therefore, if~$0<\alpha\leq\frac{\mu}{8L^2}$, we have~$k\geq0$:
\begin{align}\label{gtsvrg_2}
&\mathbb{E}\left[n\left\|\ol{\mb x}^{k+1}-\mb{x}^*\right\|^2\right] \nonumber\\
\leq&~\frac{2L^2\alpha}{\mu}\mathbb{E}\left[\left\|\mb{x}^k-W_\infty\mb{x}^k\right\|^2\right] \!+\! \left(1-\frac{\mu\alpha}{2} \right)\mathbb{E}\left[n\left\|\ol{\mb{x}}^k-\mb{x}^*\right\|^2\right] \nonumber\\
&+ \frac{4L^2\alpha^2}{n}\mathbb{E}\left[\left\|\bds\tau^k-W_\infty\bds\tau^k\right\|^2\right] \nonumber\\
&+ \frac{4L^2\alpha^2}{n}\mathbb{E}\left[n\left\|\ol{\bds\tau}^k-\mb{x}^*\right\|^2\right].
\end{align}
Next, we apply the upper bounds on~$\mathbb{E}[\|\mb{v}^k-\nabla\mb{f}(\mb{x}^k)\|^2]$ and~$\mathbb{E}[\|\mb{v}^{k+1}-\nabla\mb{f}(\mb{x}^{k+1})\|^2]$ in Lemma~\ref{var_bound_svrg} and Corollary~\ref{var_bound_svrg_2} to Lemma~\ref{general_3} and obtain:
$\forall k\geq0$,
\begin{align}\label{gtsvrg_3}
&\mathbb{E}\left[\left\|\mb{y}^{k+1}-W_\infty\mb{y}^{k+1}\right\|^2\right]\nonumber\\
\leq&~\frac{120L^2}{1-\sigma^2}\mathbb{E}\left[\left\|\mb{x}^k-W_\infty\mb{x}^k\right\|^2\right] + \frac{87L^2}{1-\sigma^2}\mathbb{E}\left[n\left\|\ol{\mb{x}}^k-\mb{x}^*\right\|^2\right] \nonumber\\
+& \frac{3+\sigma^2}{4}\mathbb{E}\left[\left\|\mb{y}^k-W_\infty\mb{y}^k\right\|^2\right]  \nonumber\\
+& \frac{38L^2}{1-\sigma^2}\mathbb{E}\left[\left\|\bds\tau^{k}-W_\infty\bds\tau^{k}\right\|^2\right] + \frac{38L^2}{1-\sigma^2}\mathbb{E}\left[n\left\|\ol{\bds\tau}^{k}-\mb{x}^*\right\|^2\right], 
\end{align}
if~$0<\alpha\leq\frac{1-\sigma^2}{14\sqrt{2}L}$.
Now, we write Lemma~\ref{consensus1},~\eqref{gtsvrg_2} and~\eqref{gtsvrg_3} jointly in an entry-wise linear matrix inequality that characterizes the evolution of \textbf{\texttt{GT-SVRG}} in the following proposition. 
\begin{proposition}\label{prop_gtsvrg}
Let Assumptions~\ref{sc},~\ref{smooth} and~\ref{connect} hold a nd Consider the iterates~$\{\mb{x}^k\},\{\mb{y}^k\},\{\mb{v}^k\}$ generated by \textbf{\texttt{GT-SVRG}}. If the step-size~$\alpha$ follows~$0<\alpha\leq\frac{\mu(1-\sigma^2)}{14\sqrt{2}L^2}$, then the following linear matrix inequality hold~$\forall k\geq0$:
\begin{align}\label{main0_svrg}
\mb{u}^{k+1} \leq J_\alpha\mb{u}^{k} + H_\alpha\wt{\mb{u}}^{k},
\end{align} 
where~$\mb{u}^k,\wt{\mb{u}}^k\in\mathbb{R}^3$ and~$J_\alpha,H_\alpha\in\mathbb{R}^{3\times3}$ are defined as
{\small\begin{align}
	\mb{u}^k=&
	\begin{bmatrix}
	\mathbb{E}\left[\left\|\mb{x}^{k}-W_\infty\mb{x}^{k}\right\|^2\right]\\ 
	\mathbb{E}\left[n\left\|\ol{\mb x}^{k}-\mb{x}^*\right\|^2\right]\\
	\mathbb{E}\Big[\frac{\left\|\mb{y}^{k} - W_\infty\mb{y}^{k}\right\|^2}{L^2}\Big]
	\end{bmatrix},\quad
	\wt{\mb{u}}^k=
	\begin{bmatrix}
	\mathbb{E}\left[\left\|\bds\tau^{k}-W_\infty\bds\tau^k\right\|^2\right] \\
	\mathbb{E}\left[n\left\|\ol{\bds\tau}^{k}-\mb{x}^*\right\|^2\right] \\
	0
	\end{bmatrix},
	\nonumber\\
	J_\alpha=&
	\begin{bmatrix}
	\dfrac{1+\sigma^2}{2} & 0 & \dfrac{2\alpha^2L^2}{1-\sigma^2} \\[1.5ex]
	\dfrac{2L^2\alpha}{\mu}&  1-\dfrac{\mu\alpha}{2} & 0\\[1.5ex]
	\dfrac{120}{1-\sigma^2} & \dfrac{87}{1-\sigma^2} & \dfrac{3+\sigma^2}{4}
	\end{bmatrix}, \nonumber\\
	H_\alpha =& 
	\begin{bmatrix}
	0 & 0 & 0 \\[1.5ex]
	\dfrac{4L^2\alpha^2}{n}&  \dfrac{4L^2\alpha^2}{n} & 0\\[1.5ex]
	\dfrac{38}{1-\sigma^2} & \dfrac{38}{1-\sigma^2} & 0 
	\end{bmatrix}.
	\nonumber
\end{align}}
\end{proposition}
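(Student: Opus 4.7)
The plan is to derive the claimed entry-wise linear matrix inequality by stacking three scalar recursions already available in this section: (i) the consensus contraction~\eqref{consensus1} from Lemma~\ref{consensus}, (ii) the optimality-gap recursion~\eqref{gtsvrg_2}, and (iii) the gradient-tracker recursion~\eqref{gtsvrg_3}. No new estimates are needed; the work is bookkeeping to match the coefficients in $J_\alpha$ and $H_\alpha$, and verifying that the step-size hypothesis of the Proposition implies all the intermediate step-size conditions used in (i)--(iii).

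First I would check the step-size bookkeeping. The derivation of~\eqref{gtsvrg_2} invoked $\alpha\leq\frac{\mu}{8L^2}$, while~\eqref{gtsvrg_3} was obtained under $\alpha\leq\frac{1-\sigma^2}{14\sqrt{2}L}$ (which in turn uses Lemma~\ref{general_3}'s requirement $\alpha\leq\frac{1}{4\sqrt{2}L}$ and Corollary~\ref{var_bound_svrg_2}'s requirement $\alpha\leq\frac{1}{8L}$); Lemma~\ref{consensus} imposes no extra condition on $\alpha$. Since $\mu\leq L$ and $1-\sigma^2\leq 1$, the assumption $\alpha\leq\frac{\mu(1-\sigma^2)}{14\sqrt{2}L^2}$ implies both $\alpha\leq\frac{\mu}{14\sqrt{2}L^2}<\frac{\mu}{8L^2}$ and $\alpha\leq\frac{1-\sigma^2}{14\sqrt{2}L}$, so all prerequisite inequalities are simultaneously valid.

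Next I would read off the three rows. Inequality~\eqref{consensus1} gives exactly the first row of $J_\alpha$ (with coefficients $\tfrac{1+\sigma^2}{2}$ on $\mathbb{E}[\|\mathbf{x}^k-W_\infty\mathbf{x}^k\|^2]$ and $\tfrac{2\alpha^2}{1-\sigma^2}$ on $\mathbb{E}[\|\mathbf{y}^k-W_\infty\mathbf{y}^k\|^2]$); after rewriting the latter as $\tfrac{2\alpha^2 L^2}{1-\sigma^2}\cdot\mathbb{E}[L^{-2}\|\mathbf{y}^k-W_\infty\mathbf{y}^k\|^2]$ to match the normalization of $\mathbf{u}^k$, this yields the first row of $J_\alpha$ and the (zero) first row of $H_\alpha$. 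Inequality~\eqref{gtsvrg_2} directly supplies the second row of $J_\alpha$ and the first two nonzero entries of the second row of $H_\alpha$, namely $\tfrac{4L^2\alpha^2}{n}$ acting on $\mathbb{E}[\|\bds\tau^k-W_\infty\bds\tau^k\|^2]$ and $\mathbb{E}[n\|\ol{\bds\tau}^k-\mathbf{x}^*\|^2]$. Finally, inequality~\eqref{gtsvrg_3}, after dividing through by $L^2$ to conform with the definition $\mathbf{u}^k_3=\mathbb{E}[L^{-2}\|\mathbf{y}^k-W_\infty\mathbf{y}^k\|^2]$, produces the third row: the $J_\alpha$ entries $\tfrac{120}{1-\sigma^2}$, $\tfrac{87}{1-\sigma^2}$, $\tfrac{3+\sigma^2}{4}$ and the $H_\alpha$ entries $\tfrac{38}{1-\sigma^2}$, $\tfrac{38}{1-\sigma^2}$, $0$.

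There is no substantive obstacle here; the content has already been established in the preceding lemmas. The only point that needs a little care is the normalization of the tracker coordinate by $L^2$ (so that the $\alpha^2 L^2$ factor appears in the $(1,3)$ entry of $J_\alpha$ and the $L^{-2}$ factor disappears when the third row is multiplied out), and verifying that no further step-size tightening is required beyond what is stated. Once these checks are made, assembling the three recursions into the vector inequality $\mathbf{u}^{k+1}\leq J_\alpha\mathbf{u}^k+H_\alpha\tilde{\mathbf{u}}^k$ is immediate, completing the proof.
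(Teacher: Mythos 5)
Your proposal is correct and matches the paper's own (implicit) proof: the paper obtains Proposition~\ref{prop_gtsvrg} precisely by stacking~\eqref{consensus1},~\eqref{gtsvrg_2}, and~\eqref{gtsvrg_3} row by row, with the same $L^{-2}$ normalization of the tracker coordinate. Your step-size bookkeeping — checking that $\alpha\leq\frac{\mu(1-\sigma^2)}{14\sqrt{2}L^2}$ implies both $\alpha\leq\frac{\mu}{8L^2}$ and $\alpha\leq\frac{1-\sigma^2}{14\sqrt{2}L}$ — is also accurate and is, if anything, made more explicit than in the paper.
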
 
Note that~$T$ is the number of the inner loop iterations of \textbf{\texttt{GT-SVRG}}. We will show that the subsequence~$\{\mb{u}^{tT}\}_{t\geq0}$ of~$\{\mb{u}^k\}_{k\geq0}$, which corresponds to the outer loop updates of~\textbf{\texttt{GT-SVRG}}, converges to zero linearly, based on which the total complexity of~\textbf{\texttt{GT-SVRG}} will be established, in terms of the number of total component gradient computations required at each node to find the solution~$\mb{x}^*$. 
We now recall from Algorithm~\ref{GT-SVRG} that~$\forall k\geq0$,~$\bds\tau^{k+1} = \mb{x}_i^{k+1}$ if~$\bmod(k+1,T)=0$; else~$\bds\tau^{k+1} = \bds\tau^{k}$. Therefore,~$\forall t\geq0$ and $tT\leq k\leq(t+1)T-1$, we have~$\bds\tau^{k} = \mb{x}^{tT}$. Based on this discussion,~\eqref{main0_svrg} can be rewritten as the following dynamical system with delays: 
\begin{align*}
\mb{u}^{k+1} \leq J_\alpha\mb{u}^{k} + H_\alpha\mb{u}^{tT},~ \forall k\in[tT,(t+1)T-1],~ \forall t\geq0. 
\end{align*} 
We then recursively apply the above inequality over~$k$ to obtain the evolution of the outer loop iterations~$\{\mb{u}^{tT}\}_{t\geq0}$:
\begin{align}\label{main_2_svrg}
\mb{u}^{(t+1)T} 
\leq&\left(J_{\alpha}^T + \textstyle\sum_{l=0}^{T-1}J_{\alpha}^{l}H_\alpha\right) \mb{u}^{tT},\quad \forall t\geq0.  
\end{align}
Clearly, to show the linear decay of~$\{\mb{u}^{tT}\}_{t\geq0}$, it sufficies to find the range of~$\alpha$ such that~$\rho\big(J_{\alpha}^T + \textstyle\sum_{l=0}^{T-1}J_{\alpha}^{l}H_\alpha\big)<1$.
To this aim, we first derive the range of~$\alpha$ such that~$\rho(J_\alpha)<1$. 

\begin{lem}\label{J}
Let Assumptions~\ref{sc},~\ref{smooth},~\ref{connect} hold and consider the system matrix~$J_\alpha$ defined in Proposition~\ref{prop_gtsvrg}. If the step-size~$\alpha$ follows~$0<\alpha\leq\frac{(1-\sigma^2)^2}{187QL}$, then
\begin{align}
\rho(J_\alpha) \leq \mn{J_\alpha}_{\infty}^{\bds\delta} \leq 1-\tfrac{\mu\alpha}{4},
\end{align}
where~$\bds\delta = \left[1, 8Q^2, \frac{6528Q^2}{(1-\sigma^2)^2}\right]^\top$.
\end{lem}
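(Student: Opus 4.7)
The plan is to apply Lemma~\ref{rho_bound} directly with $A = J_\alpha$, the candidate positive vector $\bds\delta = \left[1,\, 8Q^2,\, 6528Q^2/(1-\sigma^2)^2\right]^\top$, and $\beta = 1 - \mu\alpha/4$. This reduces the whole lemma to verifying three scalar inequalities, one per row of $J_\alpha\bds\delta \leq (1-\mu\alpha/4)\bds\delta$. I would address the rows in the order (row 2, row 3, row 1), because the structure of $J_\alpha$ essentially forces the choice of $\delta_2$ first, then $\delta_3$, and finally row 1 converts into the step-size constraint.

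First, row 2 yields $\frac{2L^2\alpha}{\mu}\delta_1 + (1-\mu\alpha/2)\delta_2 \leq (1-\mu\alpha/4)\delta_2$, which after cancellation simplifies to $\frac{8L^2}{\mu^2}\delta_1 \leq \delta_2$, i.e., $8Q^2\delta_1 \leq \delta_2$. The choices $\delta_1 = 1$ and $\delta_2 = 8Q^2$ make this hold with equality independently of $\alpha$. Next, row 3 becomes $\frac{120 + 696 Q^2}{1-\sigma^2} \leq \bigl(\tfrac{1-\sigma^2}{4} - \tfrac{\mu\alpha}{4}\bigr)\delta_3$. Using $Q \geq 1$ to bound $120 + 696Q^2 \leq 816\,Q^2$, and imposing the preliminary restriction $\alpha \leq (1-\sigma^2)/(2\mu)$ so that the bracket on the right is at least $(1-\sigma^2)/8$, a sufficient condition becomes $\delta_3 \geq 8\cdot 816\,Q^2/(1-\sigma^2)^2 = 6528\,Q^2/(1-\sigma^2)^2$, matching the stated $\delta_3$.

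Finally, row 1 reads $\frac{2\alpha^2 L^2}{1-\sigma^2}\delta_3 \leq \bigl(\tfrac{1-\sigma^2}{2} - \tfrac{\mu\alpha}{4}\bigr)\delta_1$. Substituting $\delta_1 = 1$ and the above $\delta_3$ produces
\[
\frac{13056\,\alpha^2 L^2 Q^2}{(1-\sigma^2)^3}\;\leq\;\frac{1-\sigma^2}{2}-\frac{\mu\alpha}{4}.
\]
Under the same preliminary bound $\alpha \leq (1-\sigma^2)/(2\mu)$, the right-hand side is at least $3(1-\sigma^2)/8$, so it suffices to require $\alpha^2 \leq (1-\sigma^2)^4/(34816\,L^2 Q^2)$. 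Taking square roots and using $\sqrt{34816} < 187$ gives the stated step-size bound $\alpha \leq (1-\sigma^2)^2/(187\,QL)$. A brief final check confirms that this bound is indeed tighter than the preliminary restriction $\alpha \leq (1-\sigma^2)/(2\mu)$ invoked earlier (since $Q \geq 1$ and $(1-\sigma^2) \leq 1$), so all the intermediate conditions are consistently subsumed by the single step-size requirement in the lemma.

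The main obstacle here is not conceptual but bookkeeping: one must commit up front to a specific $\bds\delta$ whose components are ordered consistently with the three row constraints, and one must ensure that the preliminary step-size restrictions used to linearize row 3 and row 1 are all absorbed by the final bound $\alpha \leq (1-\sigma^2)^2/(187QL)$. Once $\bds\delta$ is chosen as above, the three verifications and the step-size derivation become purely mechanical, and the conclusion $\rho(J_\alpha) \leq \mn{J_\alpha}_\infty^{\bds\delta} \leq 1 - \mu\alpha/4$ follows immediately from Lemma~\ref{rho_bound}.
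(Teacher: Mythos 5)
Your proof is correct and follows essentially the same route as the paper: invoke Lemma~\ref{rho_bound} with $\beta = 1-\mu\alpha/4$ and reduce the claim to the three row inequalities $J_\alpha\bds\delta\leq(1-\mu\alpha/4)\bds\delta$, choosing $\delta_1,\delta_2,\delta_3$ in the same order and with the same values (your $\delta_2=8Q^2$ is in fact what the lemma statement and the subsequent arithmetic require; the paper's in-proof ``$\delta_2=6Q^2$'' is a typo). You additionally carry out explicitly the verification of the first-row constraint, including the constant $\sqrt{34816}<187$, which the paper dismisses with ``it can be verified,'' and you correctly confirm that the preliminary restriction $\alpha\leq(1-\sigma^2)/(2\mu)$ is subsumed by the final bound.
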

\begin{proof}
In the light of Lemma~\ref{rho_bound}, we solve for the range of~$\alpha$ and a positive vector~$\bds\delta = [\delta_1,\delta_2,\delta_3]$ such that the following entry-wise linear matrix inequality holds:
$$J_\alpha\bds\delta\leq\left(1-\tfrac{\mu\alpha}{4}\right)\bds\delta,$$ which can be written equivalently as
\begin{align}
&\frac{\mu\alpha}{4} + \frac{2L^2\alpha^2}{1-\sigma^2}\frac{\delta_3}{\delta_1} \leq~\frac{1-\sigma^2}{2}, 
\label{v1'}\\
&8Q^2\delta_1 \leq~\delta_2,
\label{v2'}\\
&\frac{\mu\alpha}{4}\leq~\frac{1-\sigma^2}{4}-\frac{120}{1-\sigma^2}\frac{\delta_1}{\delta_3} 
-\frac{87}{1-\sigma^2}\frac{\delta_2}{\delta_3}.
 \label{v3'}
\end{align}
Based on~\eqref{v2'}, we set~$\delta_1 = 1$ and~$\delta_2 = 6Q^2$. With~$\delta_1$ and~$\delta_2$ being fixed, we next choose~$\delta_3>0$ such that the RHS of~\eqref{v3'} is positive, i.e,~$
\frac{1-\sigma^2}{4\delta_3}\big(\delta_3-\frac{480+2784Q^2}{(1-\sigma^2)^2}\big) >0.$
It suffices to set~$\delta_3 = \frac{6528Q^2}{(1-\sigma^2)^2}$. Now, with the previously fixed values of~$\delta_1,\delta_2,\delta_3$, in order to make~\eqref{v3'} hold, it suffices to choose~$\alpha$ such that~$0<\alpha\leq\tfrac{1-\sigma^2}{2\mu}$.
Similary, it can be verified that in order to make~\eqref{v1'} hold, it sufficies to make~$\alpha$ satisfy~$0< \alpha\leq \tfrac{(1-\sigma^2)^2}{187QL}$,
which completes the proof.
\end{proof}
We note that if the step-size~$\alpha$ satisfies the condition in Lemma~\ref{J}, we have~$\rho(J_\alpha)<1$. Moreover, since~$J_\alpha$ is nonnegative, we have that~$\sum_{l=0}^{T-1}J_\alpha^l\leq\sum_{l=0}^{\infty}J_\alpha^l = (I_3 - J_\alpha)^{-1}$. Therefore, following from~\eqref{main_2_svrg}, we have:
\begin{align}\label{main_3_svrg}
\mb{u}^{(t+1)T} \leq \left(J_{\alpha}^T + (I_3-J_\alpha)^{-1}H_\alpha\right) \mb{u}^{tT},\quad \forall t\geq0.
\end{align}
The rest of the convergence analysis is to derive the condition on the the number of each inner iterations~$T$ and the step-size~$\alpha$ of \textbf{\texttt{GT-SVRG}} such that the following inequality holds:
$$\rho(J_{\alpha}^T\mb{} + (I_3-J_\alpha)^{-1}H_\alpha)<1.$$
We first show that~$(I_3-J_\alpha)^{-1}H_\alpha$ is sufficiently small under an appropriate weighted matrix norm in the light of Lemma~\ref{rho_bound}.
\begin{lem}\label{JH}
Let Assumptions~\ref{sc},~\ref{smooth} and~\ref{connect} hold. Consider the system matrices~$J_\alpha,H_\alpha$ defined in Proposition~\ref{prop_gtsvrg}.
If the step-size~$\alpha$ follows~$0< \alpha \leq \frac{(1-\sigma^2)^2}{187QL}$, then
\begin{align*}
\mn{(I-J_\alpha)^{-1}H_\alpha}_{\infty}^{\mb{q}}\leq0.66,
\end{align*}
where~$\mb q = \big[1,1,\frac{1453}{(1-\sigma^2)^2}\big]^\top$.
\end{lem}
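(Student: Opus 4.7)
The plan is to reduce to a componentwise inequality via Lemma~\ref{rho_bound}. Because $J_\alpha,H_\alpha\geq 0$ entrywise and $\rho(J_\alpha)<1$ under the step-size condition (by Lemma~\ref{J}), the matrix $(I-J_\alpha)^{-1}H_\alpha=\sum_{k\geq 0}J_\alpha^{k}H_\alpha$ is nonnegative, so the weighted matrix norm is $\mn{(I-J_\alpha)^{-1}H_\alpha}_{\infty}^{\mb q}=\max_i\,\big[(I-J_\alpha)^{-1}H_\alpha\,\mb{q}\big]_i/q_i$. It therefore suffices to show that the vector $\mb{w}\coloneqq(I-J_\alpha)^{-1}H_\alpha\,\mb{q}$, characterized by the fixed-point equation $\mb{w}=J_\alpha\mb{w}+H_\alpha\mb{q}$, satisfies $\mb{w}\leq 0.66\,\mb{q}$ componentwise.

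To bound $\mb w$, I would solve this $3\times 3$ system by back-substitution. The first row of $H_\alpha\mb q$ is zero, so the first row of the fixed-point equation yields $w_1=\frac{4L^{2}\alpha^{2}}{(1-\sigma^{2})^{2}}w_3$. The second row rearranges to $\frac{\mu\alpha}{2}w_2=\frac{2L^{2}\alpha}{\mu}w_1+\frac{8L^{2}\alpha^{2}}{n}$, i.e. $w_2=\frac{4L^{2}}{\mu^{2}}w_1+\frac{16L^{2}\alpha}{n\mu}$. Substituting these into the third row gives a scalar equation in $w_3$ of the form $\big[\tfrac{(1-\sigma^{2})^{2}}{4}-\tfrac{(480+1392Q^{2})L^{2}\alpha^{2}}{(1-\sigma^{2})^{2}}\big]w_3=\frac{1392L^{2}\alpha}{n\mu}+76$, after multiplying through by $1-\sigma^{2}$. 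Using $\alpha\leq\frac{(1-\sigma^{2})^{2}}{187QL}$ and $Q\geq 1$, the subtracted term on the left is at most $\frac{1872(1-\sigma^{2})^{2}}{187^{2}}<0.054(1-\sigma^{2})^{2}$, so the coefficient of $w_3$ is at least $\approx 0.196(1-\sigma^{2})^{2}$; on the right, the same step-size bound gives $\frac{1392L^{2}\alpha}{n\mu}\leq\frac{1392(1-\sigma^{2})^{2}}{187n}$, so the entire right side is bounded by a small absolute constant. This yields $w_3\leq C/(1-\sigma^{2})^{2}$ with $C$ comfortably smaller than $0.66\cdot 1453$.

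With $w_3$ controlled, back-substitution settles $w_1$ and $w_2$ easily: the step-size bound forces $\frac{4L^{2}\alpha^{2}}{(1-\sigma^{2})^{2}}\leq\frac{4(1-\sigma^{2})^{2}}{187^{2}Q^{2}}$, making $w_1$ of order $1/Q^{2}$; and for $w_2$, both the multiplier $\frac{16Q^{2}L^{2}\alpha^{2}}{(1-\sigma^{2})^{2}}$ of $w_3$ and the additive $\frac{16L^{2}\alpha}{n\mu}$ simplify to small absolute constants (in particular independent of $Q$), well under $0.66$. The main obstacle is purely bookkeeping: the statement's constants $0.66$, $1453$, and $187$ are calibrated so that all three componentwise inequalities clear simultaneously, so one must track the prefactors carefully when invoking $\alpha\leq(1-\sigma^{2})^{2}/(187QL)$ and $Q\geq 1$, though no step is mathematically nontrivial once the fixed-point system has been diagonalized by elimination.
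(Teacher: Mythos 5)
Your proof is correct, and it reaches the paper's target inequality $(I_3-J_\alpha)^{-1}H_\alpha\,\mb{q}\leq 0.66\,\mb{q}$ by a different computational route. The paper bounds $(I_3-J_\alpha)^{-1}H_\alpha$ \emph{entrywise}: it lower-bounds $\det(I_3-J_\alpha)$ by $\tfrac{(1-\sigma^2)^2\mu\alpha}{32}$, upper-bounds the entries of $\operatorname{adj}(I_3-J_\alpha)$ one by one, assembles a numerical entrywise majorant of $(I_3-J_\alpha)^{-1}H_\alpha$, and only then applies Lemma~\ref{rho_bound} with $\mb{q}$. You instead bound only the \emph{action} of $(I_3-J_\alpha)^{-1}H_\alpha$ on the specific vector $\mb{q}$, by solving the fixed-point system $\mb{w}=J_\alpha\mb{w}+H_\alpha\mb{q}$ via elimination; your row-by-row identities ($w_1=\tfrac{4L^2\alpha^2}{(1-\sigma^2)^2}w_3$, $w_2=4Q^2w_1+\tfrac{16L^2\alpha}{n\mu}$, and the reduced scalar equation for $w_3$ with coefficient at least $0.196(1-\sigma^2)^2$ and right-hand side at most roughly $83$) all check out and give $w_3\lesssim 426/(1-\sigma^2)^2<0.66\cdot\tfrac{1453}{(1-\sigma^2)^2}$, $w_1\lesssim 0.05$, $w_2\lesssim 0.28$. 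Your approach buys economy and transparency: you avoid computing six adjugate entries and a determinant bound, track fewer intermediate constants, and the structure of the elimination makes visible where the calibration of $187$, $1453$, and $0.66$ enters. The paper's approach buys a reusable entrywise majorant of the whole matrix $(I_3-J_\alpha)^{-1}H_\alpha$, which is slightly more information than is needed for this lemma. One small point to make explicit in a final write-up: the identity $\mn{A}_{\infty}^{\mb q}=\max_i(A\mb q)_i/q_i$ for nonnegative $A$ (equivalently, the use of Lemma~\ref{rho_bound}) and the fact that $\rho(J_\alpha)<1$ under the stated step-size (Lemma~\ref{J}) are both needed to justify the Neumann-series representation and the nonnegativity of $\mb w$; you cite both, so the argument is complete.
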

\begin{proof}
We start by deriving an entry-wise upper bound for the matrix~$(I-J_\alpha)^{-1}$. Note that the determinant of~$(I-J_\alpha)^{-1}$ is given by 
\begin{align*}
\det\left(I_3-J_\alpha\right) = \frac{(1-\sigma^2)^2\mu\alpha}{16}
- \frac{348L^4\alpha^3}{\mu(1-\sigma^2)^2} - \frac{120\alpha^3\mu L^2}{(1-\sigma^2)^2}.
\end{align*}
It can be verified that if~$0<\alpha\leq \frac{(1-\sigma^2)^2}{187QL}$,
\begin{align}
\det\left(I_3-J_\alpha\right) \geq \frac{(1-\sigma^2)^2\mu\alpha}{32}.
\end{align}
Then we derive an entry-wise upper bound for~$\operatorname{adj}(I_3-J_\alpha)$, where~$\mbox{adj}(\cdot)$ denotes the adjugate of the argument matrix and we denote~$\left[\operatorname{adj}\left(\cdot\right)\right]_{i,j}$ as its~$i,j$th entry: 
{\small\begin{align}
\left[\operatorname{adj}\left(I-J_\alpha\right)\right]_{1,2}
&= \frac{174L^2\alpha^2}{\left(1-\sigma^2\right)^2},
\quad
\left[\operatorname{adj}\left(I-J_\alpha\right)\right]_{1,3}
= \frac{\mu L^2\alpha^{3}}{1-\sigma^2}, \nonumber\\
\left[\operatorname{adj}\left(I-J_\alpha\right)\right]_{2,2}
&\leq\frac{(1-\sigma^2)^2}{8}, \quad
\left[\operatorname{adj}\left(I-J_\alpha\right)\right]_{2,3}
=\frac{4L^4\alpha^3}{\mu(1-\sigma^2)},\nonumber\\
\left[\operatorname{adj}\left(I-J_\alpha\right)\right]_{3,2}
&=\frac{87}{2}, \qquad\quad~~
\left[\operatorname{adj}\left(I-J_\alpha\right)\right]_{3,3}
=\frac{\mu\alpha(1-\sigma^2)}{4}.\nonumber
\end{align}}With the help of the above calculations, an entry-wise upper bound for~$\left(I_3-J_\alpha\right)^{-1}H_\alpha=\frac{\operatorname{adj}(I_3-J_\alpha)}{\det(I_3-J_\alpha)}H_\alpha$ can be obtained, i.e., if~$0<\alpha\leq\frac{(1-\sigma^2)^2}{187QL}$, we have
{\small\begin{align*}
\left(I_3-J_\alpha\right)^{-1}H_\alpha 
\leq
\begin{bmatrix}
0.039 & 0.039 & 0 \\[1.5ex]
0.23 &  0.23 & 0\\[1.5ex]
\dfrac{334}{(1-\sigma^2)^2} & \dfrac{334}{(1-\sigma^2)^2} & 0
\end{bmatrix}.
\end{align*}}Using Lemma~\ref{rho_bound} in a similar way as the proof of Lemma~\ref{J}, it can be verified that~$\big(\left(I_3-J_\alpha\right)^{-1}H_\alpha\big)\mb{q}\leq0.6 6\mb{q}$, where~$\mb q = [1,1,\frac{1453}{(1-\sigma^2)^2}]^\top$, which completes the proof. 
\end{proof}
Note that we use two different weighted matrix norms to bound~$J_\alpha$ and~$(I-J_\alpha)^{-1}H_\alpha$ respectively in Lemma~\ref{J} and~\ref{JH}, i.e.,~$\mn{\cdot}_\infty^{\bds\delta}$ and~$\mn{\cdot}_\infty^{\mb{q}}$, where~$\bds\delta = [1,8Q^2,\frac{6528Q^2}{(1-\sigma^2)^2}]^\top$ and~$\mb{q}=[1,1,\frac{1453}{(1-\sigma^2)^2}]^\top$. It can be verified that~\cite{matrix_analysis}:~$\forall X\in\mathbb{R}^{3\times3}$,
\begin{align}\label{equiv}
\mn{X}_\infty^{\mb{q}} \leq 8Q^2\mn{X}_\infty^{\mb{\bds\delta}}.
\end{align}
We next show the linear convergence of the outer loop of \textbf{\texttt{GT-SVRG}}, i.e., the linear decay of the subsequence~$\{\mb{u}^{tT}\}_{t\geq0}$ of~$\{\mb{u}^{k}\}_{k\geq0}$, where~$T$ is the number of inner loop iterations.
\begin{P2}
Consider the iterates~$\{\mb{u}^k\}$ generated by \textbf{\texttt{GT-SVRG}} (defined in Proposition~\ref{prop_gtsvrg}) and recall the recursion in~\eqref{main_3_svrg}:~$\forall t\geq0,\mb{u}^{(t+1)T} 
\leq\big(J_\alpha^T+\left(I-J_\alpha\right)^{-1}H_\alpha\big)\mb{u}^{tT}$.
Note that the weighted vector norm~$\|\cdot\|_\infty^{\mb{q}}$ induces the weighted matrix norm~$\mn{\cdot}_\infty^{\mb{q}}$~\cite{matrix_analysis}. Then using Lemma~\ref{J},~\ref{JH} and~\eqref{equiv}, If the step-size~$\alpha=\frac{(1-\sigma^2)^2}{187QL}$ and the number of inner loop iterations~$T = \frac{1496Q^2}{(1-\sigma^2)^2}\log(200Q)$, then we have:~$\forall t\geq0$,
\begin{align}\label{out}
\left\|\mb{u}^{(t+1)T}\right\|_\infty^{\mb{q}}
\leq&~\mn{J_\alpha^T+\left(I-J_\alpha\right)^{-1}H_\alpha}_{\infty}^{\mb{q}}\left\|\mb{u}^{tT}\right\|_\infty^{\mb{q}}\nonumber\\
\leq&~\left(\mn{J_\alpha^T}_{\infty}^{\mb{q}}+0.66\right)\left\|\mb{u}^{tT}\right\|_\infty^{\mb{q}}\nonumber\\
\leq&~\left(8Q^2\big(\mn{J_\alpha}_{\infty}^{\bds{\delta}}\big)^T+0.66\right)\left\|\mb{u}^{tT}\right\|_\infty^{\mb{q}}\nonumber\\
\leq&~0.7\left\|\mb{u}^{tT}\right\|_\infty^{\mb{q}},
\end{align}
Clearly,~\eqref{out} shows that the outer loop of \textbf{\texttt{GT-SVRG}}, i.e., $\{\mb{x}^{tT}\}_{t\geq0}$, converges to an~$\epsilon$-optimal solution with~$\mc{O}(\log\frac{1}{\epsilon})$ iterations. We further note that in each inner loop of~\textbf{\texttt{GT-SVRG}}, each node~$i$ computes~$(m_i + 2T)$ local component gradients. Therefore, the total number of component gradient computations at each node required is~$
\mc{O}\big(\big(M + \frac{Q^2\log Q}{(1-\sigma)^2}\big)\log\frac{1}{\epsilon}\big),
$ where~$M$ is the largest number of data points over all nodes and the proof follows. 
\end{P2}

\section{Conclusions}\label{conclusions}
In this paper, we have proposed a novel framework for constructing variance-reduced decentralized stochastic first-order methods over undirected and weight-balanced directed graphs that hinge on gradient tracking techniques. In particular, we derive decentralized versions of the centralized $\SAGA$ and \SVRG~algorithms, namely \textbf{\texttt{GT-SAGA}} and \textbf{\texttt{GT-SVRG}}, that achieve accelerated linear convergence for smooth and strongly convex functions compared with existing decentralized stochastic first-order methods. We have further shown that in the ``big data" regimes, \textbf{\texttt{GT-SAGA}} and \textbf{\texttt{GT-SVRG}} achieve non-asymptotic, linear speedups in terms of the number of nodes compared with centralized \SAGA~and \SVRG.


\bibliographystyle{IEEEbib}
\bibliography{DOPT,references}

\begin{IEEEbiography}[{\includegraphics[width=1in,height=1.2in,clip,keepaspectratio]{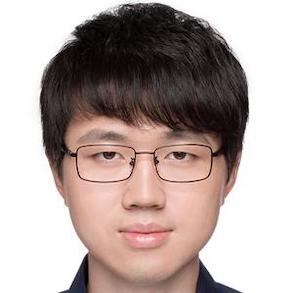}}]{Ran Xin} received his B.S. degree in Mathematics and Applied Mathematics from Xiamen University, China, in 2016, and M.S. degree in Electrical and Computer Engineering from Tufts University in 2018. Currently, he is a Ph.D. candidate in the Electrical and Computer Engineering Department at Carnegie Mellon University. His research interests include convex and nonconvex optimization, stochastic approximation and machine learning.
\end{IEEEbiography}

\begin{IEEEbiography}[{\includegraphics[width=1in,height=1.2in,clip,keepaspectratio]{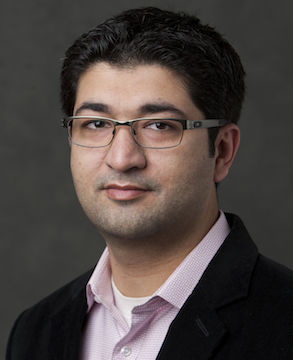}}]{Usman A. Khan} is an Associate Professor of Electrical and Computer Engineering (ECE) at Tufts University, Medford, MA, USA. His research interests include statistical signal processing, network science, and decentralized optimization over multi-agent systems. 
He received his B.S. degree in 2002 from University of Engineering and Technology, Pakistan, M.S. degree in 2004 from University of Wisconsin-Madison, USA, and Ph.D. degree in 2009 from Carnegie Mellon University, USA, all in ECE. 
He 
is currently an Associate Editor of the \textit{IEEE Control System Letters}, \textit{IEEE Transactions Signal and Information Processing over Networks}, and \textit{IEEE Open Journal of Signal Processing}. He is the Lead Guest Editor for the \textit{Proceedings of the IEEE Special Issue on Optimization for Data-driven Learning and Control} slated to publish in Nov. 2020. 
\end{IEEEbiography}

\begin{IEEEbiography}[{\includegraphics[width=1in,height=1.2in,clip,keepaspectratio]{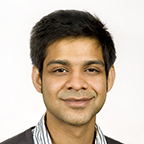}}]{Soummya Kar} received a B.Tech. in Electronics and Electrical Communication Engineering from the Indian Institute of Technology, Kharagpur, India, in May 2005 and a Ph.D. in electrical and computer engineering from Carnegie Mellon University, Pittsburgh, PA, in 2010. From June 2010 to May 2011 he was with the Electrical Engineering Department at Princeton University as a Postdoctoral Research Associate. He is currently a Professor of Electrical and Computer Engineering at Carnegie Mellon University. His research interests span several aspects of decision-making in large-scale networked dynamical systems with applications to problems in network science, cyber-physical systems and energy systems. 

\end{IEEEbiography}

\end{document}